\documentclass[a4paper, 11 pt , english]{article}

\usepackage[english]{babel}
\usepackage{ulem}
\usepackage[utf8]{inputenc}
\usepackage[T1]{fontenc} 
\usepackage{fancybox} 
\usepackage{alltt}
\usepackage{graphicx}
\usepackage{float}
\usepackage{bm}
\usepackage{lmodern} 
\usepackage{babel}
\usepackage[top = 3cm, bottom = 3 cm, right = 3cm, left = 3 cm]{geometry}
\usepackage{appendix}
\usepackage{bigints}
\usepackage{pgf,tikz,pgfplots}
\pgfplotsset{compat=1.15}
\usepackage{mathrsfs}
\usepackage{dsfont}
\usetikzlibrary{arrows}
\usepackage{graphicx}
\usepackage{amsmath}
\usepackage{amssymb}
\usepackage{mathtools} 
\usepackage[T1]{fontenc} 
\usepackage{lmodern} 
\usepackage[utf8]{inputenc} 
\usepackage{babel}
\usepackage[linesnumbered, french]{algorithm2e}
\usepackage{physics}
\usepackage{amsthm}
\usepackage{mathtools}
\usepackage{empheq}
\usepackage[most]{tcolorbox}
\usepackage{stmaryrd}
\usepackage{enumitem}
\newtheorem{Theorem}{Theorem}[section]
\newtheorem{Definition}[Theorem]{Definition}
\newtheorem{Proposition}[Theorem]{Proposition}

\newtheorem{Lemma}[Theorem]{Lemma}
\newtheorem{Corollary}[Theorem]{Corollary}
\newtheorem{Remark}[Theorem]{Remark}

\newtheorem*{AssumptionA1}{Assumption $\textbf{\rm A}_{\mathbb{W}_p}$}
\newtheorem*{AssumptionA2}{Assumption $\textbf{\rm A}_{\mathbb{W}^{\rm ad}_p}$}
\newtheorem*{AssumptionB1}{Assumption $\textbf{\rm B}_{\mathbb{W}_p}$}

\newtheorem*{AssumptionC1}{Assumption ${\rm C}_{ \mathbb{W}_p} $}
\newtheorem*{AssumptionC2}{Assumption ${\rm C}_{ \mathbb{W}^{\rm ad}_p}$}
\newtheorem*{AssumptionD1}{Assumption ${\rm D}_{ {\rm M} }$}

\def \Sum{\displaystyle\sum}

\def \be{\begin{eqnarray}}
\def \ee{\end{eqnarray}}
\def \b*{\begin{align*}}
\def \e*{\end{align*}}

\def \E{\mathbb{E}}

\def \L{\mathbb{L}}
\def \N{\mathbb{N}}
\def \P{\mathbb{P}}

\def \R{\mathbb{R}}
\def \TT{{\bf T}}

\def \W{\mathbb{W}}
\def \X{\mathbb{X}}

\def \[{[\,\!\![}
\def \]{]\,\!\!]}

\def \1{{\bf 1}}

\def \ep{\hbox{ }\hfill$\Box$}

\def\Ec{{\cal E}}

\def\Lc{{\cal L}}

\def\Kc{{\cal K}}

\def\Pc{{\cal P}}

\def\namedlabel#1#2{\begingroup
    #2%
    \def\@currentlabel{#2}%
    \phantomsection\label{#1}\endgroup
}
\newcommand{\vertiii}[1]{{\left\vert\kern-0.25ex\left\vert\kern-0.25ex\left\vert #1 
  \right\vert\kern-0.25ex\right\vert\kern-0.25ex\right\vert}}

\let\L\undefined
\newcommand{\L}{\mathbb{L}}

\newlist{romanenumerate}{enumerate}{1}
\setlist[romanenumerate,1]{label=\textup{(\roman*)}, leftmargin=2em}

\usepackage[colorlinks,hyperindex,bookmarks,linkcolor=blue,citecolor=blue,urlcolor=blue]{hyperref}

\usepackage{wrapfig}
\usepackage{epsf}
\usepackage{biblatex}
\title{\bf Higher-Order Sensitivity Analysis of Wasserstein and Adapted Wasserstein Distributionally Robust Optimization Problems}
\author{            Nathan Sauldubois\thanks{
            New York University, Tandon School of Engineering
                                ns6982@nyu.edu
                                } 
}
\date{\today} 

\DefineBibliographyStrings{english}{
 and = {\&}}

\begin{document}

\maketitle

\begin{abstract}
We investigate the higher-order expansions of the sensitivities of functionals of probability measures. Building on the work of \citeauthor{bartl2021sensitivity} \cite{bartl2021sensitivity} and \citeauthor{bartlsensitivityadapted} \cite{bartlsensitivityadapted}, who established the first-order expansion when the functional arises from a stochastic optimisation problem, we extend these results using recent developments in differential calculus on Wasserstein spaces. Specifically, we generalise their approach to broader classes of functionals defined on the Wasserstein space. We provide an explicit formula for the second-order expansion of the sensitivity of a functional with respect to both the Wasserstein and the adapted Wasserstein metrics. Furthermore, we propose a construction method for the higher-order expansion of these sensitivities. Finally, following recent advances by \citeauthor{Touzisauldubois2024ordermartingalemodelrisk} \cite{Touzisauldubois2024ordermartingalemodelrisk} and \citeauthor{JiangObloj} \cite{JiangObloj} concerning sensitivity of Distributionally Robust Optimisation under martingale constraints, we also derive higher-order expansions for sensitivities in the presence of such constraints.
\end{abstract}

\noindent \textbf{2020 Mathematics Subject Classification:} 49K45, 49Q22, 90C31, 91G70.

\section{Introduction}

Consider a functional $g : \mathcal{P}_p(E) \to \mathbb{R}$, where $\mathcal{P}_p(E)$ denotes the set of probability measures on a Polish space $E$ with finite $p$-th moments. We are interested in the sensitivity of $g(\mu)$ with respect to variations of the underlying measure $\mu$. Computing the value of $g(\mu)$ encompasses a wide range of problems, including stochastic optimisation, optimal transport, and pricing financial contracts. Understanding how $g(\mu)$ reacts to perturbations of $\mu$ is crucial for addressing model uncertainty, often referred to as Knightian uncertainty, introduced by \citeauthor{knight1921risk} \cite{knight1921risk}. Knightian uncertainty accounts for the fact that an agent cannot perfectly know or predict their environment and is therefore exposed to potential model misspecifications. 
Distributionally Robust Optimisation (DRO) addresses the model misspecification problem by optimising against worst-case scenarios over a suitably chosen family of models. As emphasised by \citeauthor{gao2023distributionally} \cite{gao2023distributionally}, the ambiguity set must be selected carefully, balancing between the need to reflect the intrinsic features of the problem and the requirement of mathematical tractability. Consequently, the choice of the ambiguity set heavily depends on the nature of the functional $g$ and the underlying space $E$.
For instance, when $E$ is the space of continuous functions, a natural choice for the family of models is the set of semimartingale laws with uncertain drift and volatility characteristics, as considered in \citeauthor{bartl2024numerical} \cite{bartl2024numerical}, \citeauthor{bartl2023sensitivity} \cite{bartl2023sensitivity}, \citeauthor{bartl2021duality} \cite{bartl2021duality}, 
\citeauthor{compoint2025sensitivity} \cite{compoint2025sensitivity} and 
\citeauthor{geuchen2022affine} \cite{geuchen2022affine}.

A natural choice for the ambiguity set is to consider measures that are close to a reference measure according to a suitable distance or divergence. For instance, when using the Kullback-Leibler divergence, this approach has been extensively studied in \citeauthor{lam_robust_2016} \cite{lam_robust_2016}, \citeauthor{lam2018sensitivity} \cite{lam2018sensitivity}, and \citeauthor{petersen2000minimax} \cite{petersen2000minimax}.
The choice of the Kullback-Leibler divergence offers tractability, and as demonstrated in \citeauthor{lam_robust_2016} \cite{lam_robust_2016}, it leads to an explicit second-order expansion of the robust objective in the unconstrained setting. This has recently been pushed to higher orders and connected to Bartlett-type coverage correction for general von Mises differentiable functionals by \citeauthor{he_higher-order_2025} \cite{he_higher-order_2025}.
Alternatively, one may define the ambiguity set as a ball of radius $r$ with respect to the Wasserstein distance, or, in dynamic settings, with respect to the adapted Wasserstein distance. The adapted Wasserstein distance was introduced to restore stability of stochastic optimisation problems under adapted perturbations by \citeauthor{backhoff_adapted_2020} \cite{backhoff_adapted_2020}, and refines the classical Wasserstein distance by additionally requiring causality of the optimal coupling \cite{backhoff_all_2020, eder2019compactness}; see also \citeauthor{backhoff_estimating_2022} \cite{backhoff_estimating_2022} and \citeauthor{bartl_wasserstein_2021} \cite{bartl_wasserstein_2021} for further developments, and \citeauthor{pflug_multistage_2014} \cite{pflug_multistage_2014} for the related nested distance in the stochastic programming literature. \citeauthor{blanchet_quantifying_2016} \cite{blanchet_quantifying_2016} introduced an ambiguity set based on an optimal transport criterion, and studied strong duality of the DRO problem. The study of Wasserstein DRO has been done under two main perspectives: the first is duality, which is the point of view taken by \citeauthor{blanchet_quantifying_2016} in \cite{blanchet_quantifying_2016}, \citeauthor{zhang2025short} \cite{zhang2025short}, and \citeauthor{gao2023distributionally} \cite{gao2023distributionally}. The other paradigm to study this problem is the sensitivity analysis, that is to say, finding the expansion of the error with respect to the radius $r$ of the ball, or with respect to a rescaling parameter (in the case of penalisation formulation, see \citeauthor{JiangObloj} \cite{JiangObloj}, or \citeauthor{nendel_parametric_2022} \cite{nendel_parametric_2022}). This perspective is what motivated recent works such as \citeauthor{bartl2021sensitivity} \cite{bartl2021sensitivity} for the sensitivity with respect to the Wasserstein distance, \citeauthor{bartlsensitivityadapted} \cite{bartlsensitivityadapted} for the adapted Wasserstein distance in discrete time and \citeauthor{JiangObloj} \cite{JiangObloj} for the adapted Wasserstein sensitivity with martingale constraints in discrete and continuous time and \citeauthor{Touzisauldubois2024ordermartingalemodelrisk} \cite{Touzisauldubois2024ordermartingalemodelrisk} for the Wasserstein distance and the adapted Wasserstein distance under martingale constraints. In a regret-minimisation variant of Wasserstein DRO, \citeauthor{fiechtner_wasserstein_2025} \cite{fiechtner_wasserstein_2025} show that the first-order sensitivity vanishes at the empirical risk minimiser, so that a second-order expansion governs the local behaviour of the regret. A related but distinct line of work, \citeauthor{lanzetti_first-order_2022} \cite{lanzetti_first-order_2022}, derives first-order optimality conditions for optimisation problems posed directly on the Wasserstein space, rather than expansions of the value of a fixed functional with respect to the radius of an ambiguity ball, which is the perspective adopted here.

The focus of the present paper is the sensitivity of DRO with respect to the adapted Wasserstein and the classical Wasserstein distance. Our goal is to refine and extend the asymptotic expansions obtained in \citeauthor{bartl2021sensitivity} \cite{bartl2021sensitivity} and \citeauthor{bartlsensitivityadapted} \cite{bartlsensitivityadapted}, by developing explicit second-order formulas and providing a method for constructing higher-order expansions in Proposition \ref{prop:approx 1} and \ref{prop:expansion order n}. To this end, we rely on the differential calculus on the space of probability measures, through the notion of linear functional derivative, as developed in the mean-field game literature by \citeauthor{CarmonaDelarue} \cite{CarmonaDelarue}, and further used by \citeauthor{chassagneux2022weak} \cite{chassagneux2022weak} and \citeauthor{jourdain2021central} \cite{jourdain2021central}. Unlike \citeauthor{Touzisauldubois2024ordermartingalemodelrisk} \cite{Touzisauldubois2024ordermartingalemodelrisk}, we do not restrict ourselves to the martingale case, and we provide a sufficient condition under which the martingale sensitivity with respect to the adapted Wasserstein distance admits an order$-2$ expansion in Proposition \ref{prop:mart expansion order 2}.
All these approximation results are employed in Section \ref{sec:Numerical illustration} to assess the accuracy of the expansions. Furthermore, building on \citeauthor{nendel_parametric_2022} \cite{nendel_parametric_2022}, we show that incorporating order-2 insights into the initialisation of the neural network accelerates the solution of DRO problems under both Wasserstein and adapted Wasserstein criteria.

The remainder of the paper is organised as follows. Section \ref{sec:Notations and definitions} introduces the notations and definitions used throughout the paper. Section \ref{sec:Main results} presents our main results, namely the second-order expansions of the Wasserstein and adapted Wasserstein sensitivity, the construction of the order-$n$ expansion, and the sufficient condition for the martingale case. Section \ref{sec:Numerical illustration} illustrates these results numerically and studies the resulting acceleration of the neural network initialisation. Section \ref{sec:Proofs} collects the proofs of our main results.

\section{Notations and definitions}\label{sec:Notations and definitions}

Throughout this paper, let $S$ be a Euclidean space endowed with the norm $ \vert \cdot \vert $. As usual, for $p >1$ we denote by $p'$ the conjugate exponent, \textit{i.e.}, $ \frac{1}{p} + \frac{1}{p'} = 1$. 
Let $\Pc (S) $ be the collection of all probability measures $\mu$ on $E$, with the corresponding subset of finite $p-$th moment measures:
$$
\Pc_p ( S ):=\Big\{\mu\in\Pc ( S ) :~\E^\mu[|X|^p]<\infty\Big\},
~~\mbox{for all}~p\ge 1.
$$
We introduce the projection coordinates $(X,X')$ on $\X := S \times S$ defined by $X(x,x')=x$ and $X'(x,x')=x'$ for all $ (x,x' )\in \X$. For $\mu,\mu'\in\Pc ( S ) \times \Pc ( S )$, we define the set of all couplings
$$
\Pi(\mu,\mu')
:=
\Big\{ \pi\in\Pc ( \X ) :\pi\circ X^{-1}=\mu~\mbox{and}~\pi\circ {X'}^{-1}=\mu'\Big\},
$$ 
The $p-$Wasserstein distance between $\mu$ and $\mu'$ is defined as:
\begin{eqnarray*}
\W_p(\mu,\mu')
:=
\inf_{\pi\in\Pi(\mu,\mu')}\E^\pi\Big[\big|X-X'\big|^p\Big]^{\frac1p},
&\mbox{for all}&
\mu,\mu'\in\Pc_p(S)
\end{eqnarray*}
We denote the corresponding ball of radius $r$ by
\begin{equation*} 
B_{\W_p} (\mu, r)
:=
\big\{\mu'\in\Pc(\X):\W_p(\mu,\mu')\le r \big\}
\end{equation*}
In the dynamic setting that we are addressing in this paper, we define the time steps on the state space $ U = S \times S$ for some Euclidean space $S$, through the projection coordinate maps on $\X \times \X$: 
$$ 
X_i (x, x') = x_i
~\mbox{and}~ 
X'_i (x,x') = x'_i,
~i=1, 2,
~\mbox{for all}~
(x, x')\in \X
.$$ 
Define ${\rm M} $ as the set of all probability measures on $S$ with finite $p$-th moment that are martingales, 
\begin{eqnarray*}
\rm M
&:=&
\Big\{\mu\in\Pc (S) :~\E^\mu[X_2|X_1]=X_1,~\mu-\mbox{a.s.}\Big\},
\end{eqnarray*}
and we recall that for $\mu\in \Pc_p (S)$, we have
\begin{eqnarray*}
\mu\in \rm M
&\mbox{if and only if}&
\E^\mu\big[h^\otimes \big]=0,
~\mbox{for all}~
h\in \mathbb{L}^{p'}(\mu_1),
\end{eqnarray*}
where we used the notation $
h^\otimes(x)
:=h(x_1)\cdot(x_2-x_1),
$ for all $
x=(x_1,x_2)\in \X.
$ 
\begin{Definition}
A probability measure $ \mathbb{P} \in \mathcal{P}( \X ) $ is causal if $\mathbb{F}^{X} := \sigma( X ) $ is compatible with $\mathbb{F}^{X'} := \sigma( X' ) $, in the sense that for all bounded Borel-measurable $ f : S \times S \rightarrow \R $ and $ g : S \rightarrow \R $,
$$
\E^{\P} \big[ 
f ( X_1, X_2 ) g ( X'_1 ) \vert X_1 \big] 
=
\E^{\P} \big[ 
f ( X_1, X_2 ) \vert X_1 \big] 
\,
\E^{\P} \big[ 
g ( X'_1 ) \vert X_1 \big] 
.
$$
\end{Definition}
We introduce the set of {\it bi-causal} couplings 
$$
\Pi^{\text{bc}}(\mu,\mu')
:=
\{\pi\in \Pi ( \mu, \mu' ) \,\, \text{such that} \,\, \pi \,\, \text{and} \,\,\pi \circ ( X', X )^{-1} \,\, \text{are causal } \},
$$
together with the corresponding {\it adapted Wasserstein} distance 
\begin{eqnarray*}
\W^{{\rm ad}}_p(\mu,\mu')
:=
\inf_{\pi\in\Pi^{bc}(\mu,\mu')}\E^\pi\Big[\big|X-X'\big|^p\Big]^{\frac1p},
&\mbox{for all}&
\mu,\mu'\in\Pc_p(U).
\end{eqnarray*}
We also introduce the set of causal mappings $ \mathbb{L}^p_{\rm ad} ( \mu ) := \{ T := ( T_1 (X_1) , T_2 (X) ) , \,\, T_1 \in \mathbb{L}^p( \mu_1) \, , \, T_2 \in \mathbb{L}^p ( \mu )\}$, endowed with the norm $ \Vert T \Vert^p_{\mathbb{L}^p_{ \rm ad} ( \mu )} := \Vert T_1 \Vert_{\mathbb{L}^p ( \mu_1)}^p + \Vert T_2 \Vert_{\mathbb{L}^p ( \mu)}^p $. We denote the ball of radius $r$ by
$
B_{\W^{\rm ad}_p} (\mu, r)
:=
\big\{\mu'\in\Pc_p(S):\W^{{\rm ad}}_p(\mu,\mu')\le r\big\}
$ and the ball restricted to the martingale measure 
\begin{equation}\label{eqdef:Mart ad wass ball}
 B^{\rm M}_{\W^{\rm ad}_p} (\mu, r) := B_{\W^{\rm ad}_p} (\mu, r) \cap {\rm M}
.\end{equation}
Throughout this paper, we consider a function $g:\Pc_p(S)\longrightarrow\R$ with appropriate smoothness in the following sense. 
\begin{Definition}
We say that $g$ has a linear functional derivative if there exists a continuous function $\delta_mg:\Pc_p(S) \times S \longrightarrow\R$, with $p-$polynomial growth in $x$, locally uniformly in $m$, such that for all $\mu,\mu'\in\Pc_p(S)$, and denoting $\bar\mu_\lambda:=\mu+\lambda(\mu'-\mu)$, we have
$$
\frac{g\big(\bar\mu_\lambda\big)-g(\mu)}{\lambda}
\longrightarrow
\langle\delta_mg(\mu,x),\mu'-\mu\rangle
:=
\int_S \delta_mg(\mu,x)(\mu'-\mu)(\mathrm{d}x),
~\mbox{as}~\lambda\searrow 0.
$$
\end{Definition}
\noindent Recursively, we can define the $k$-th order linear functional, satisfying
\be
\delta^{k-1}_m g(\mu', y)- \delta^{k-1}_m g(\mu, y)
=
\int_0^1 \!\!\!\int_S \delta^{k}_mg(\bar\mu_\lambda,x,y )(\mu'-\mu)(\mathrm{d}x)d\lambda,
&
\mu,\mu'\in\Pc_p \big( S \big)
\ee
\noindent for all $y, y' \in S$. This definition of the linear functional derivative induces a Taylor's formula (Lemma $2.1$ of \citeauthor{chassagneux2022weak} \cite{chassagneux2022weak}). Given two normed vector spaces $(E, \Vert \cdot \Vert_E)$ and $(F, \Vert \cdot \Vert_F)$, define 
$$ 
{\rm Pol}_p (E, F)
:=
\Big\{ g : E \rightarrow F \,\, : \,\, \exists C > 0, \,\,
\Vert g(x) \Vert_F \leq C ( 1 + \Vert x \Vert_E^p ) \,\, \text{for all } x \in E
\Big\}
.$$
Whenever the context is clear, we will omit $E$ and $F$.

\begin{Proposition}\label{prop:Taylor expansion measure order n}
Let $ g : \Pc_p ( S ) \rightarrow \R $ be an $n-$times linearly differentiable map, such that $\delta_m^j g (\mu, \cdot) \in {\rm Pol}_p $, locally uniformly in $\mu$, and is measurable. Then, the following identity holds
\begin{equation*}
\begin{split}
 g(\mu')
=
 g(\mu)
&+
\sum_{k = 1}^{n-1} 
\frac{1}{k!}
\int_{ S^k} 
\delta^{k}_m g(\mu, x )(\mu'-\mu)^{\otimes k} (dx)
\\
&+
\int_0^1 \frac{ ( 1 - \lambda )^{n-1 }}{(n-2)!} 
\int_{S^{n}}
\delta^{n}_m g( \bar{\mu}_\lambda , x )(\mu'-\mu)^{ \otimes n} (dx)
\mathrm{d}\lambda
\end{split}
\end{equation*}
for all $
\mu,\mu'\in\Pc_p ( S )$, where $ \bar{\mu}_\lambda = ( 1 - \lambda ) \mu + \lambda \mu '$.
\end{Proposition}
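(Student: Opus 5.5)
The plan is to reduce the statement to the classical one-dimensional Taylor formula with integral remainder, applied along the segment $\lambda\mapsto\bar\mu_\lambda=(1-\lambda)\mu+\lambda\mu'$. Fix $\mu,\mu'\in\Pc_p(S)$ and set $\varphi(\lambda):=g(\bar\mu_\lambda)$ for $\lambda\in[0,1]$. The goal is to show that $\varphi\in C^n([0,1])$ with
$$
\varphi^{(k)}(\lambda)=\int_{S^k}\delta^k_m g(\bar\mu_\lambda,x)\,(\mu'-\mu)^{\otimes k}(dx),\qquad k=1,\dots,n .
$$
Once this holds, the identity $\varphi(1)=\sum_{k=0}^{n-1}\frac{1}{k!}\varphi^{(k)}(0)+\int_0^1\frac{(1-\lambda)^{n-1}}{(n-1)!}\varphi^{(n)}(\lambda)\,d\lambda$ gives the claimed expansion. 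The remainder kernel should be checked against the normalisation written in the statement, since the standard integral form has $(n-1)!$ in the denominator.

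\textbf{Segment and domination.} First observe that $\bar\mu_\lambda$ is continuous in $\lambda$ for the $\W_p$ topology. Moreover, all $\bar\mu_\lambda$ have $p$-th moments bounded by $\max(\E^\mu|X|^p,\E^{\mu'}|X|^p)$. Hence the segment $\{\bar\mu_\lambda\}$ is a compact subset of $\Pc_p(S)$. On this compact set, the local uniformity assumption gives a single constant $C$ with $|\delta^j_m g(\bar\mu_\lambda,x)|\le C(1+|x|^p)$ for all $\lambda\in[0,1]$ and $j\le n$. Every integral against the signed measure $(\mu'-\mu)^{\otimes k}$ is then bounded by a constant depending on the $p$-th moments of $\mu,\mu'$. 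This makes Fubini and dominated convergence available throughout.

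\textbf{Derivatives by induction on $k$.} I would next compute the derivatives inductively. For $y\in S^{k-1}$, use the defining relation of $\delta^k_m g$ between $\bar\mu_\lambda$ and $\bar\mu_{\lambda+h}$. The key point is that $\bar\mu_{\lambda+h}-\bar\mu_\lambda=h(\mu'-\mu)$, and that the interpolation between these two measures is $\bar\mu_{\lambda+th}$. This gives
$$
\frac{\delta^{k-1}_m g(\bar\mu_{\lambda+h},y)-\delta^{k-1}_m g(\bar\mu_\lambda,y)}{h}=\int_0^1\!\!\int_S\delta^k_m g(\bar\mu_{\lambda+th},x,y)\,(\mu'-\mu)(dx)\,dt .
$$
Next, integrate in $y$ against $(\mu'-\mu)^{\otimes(k-1)}$ and apply Fubini, justified by the domination above. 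Then let $h\to0$, using continuity of $\delta^k_m g$ in the measure argument together with the polynomial bound. This yields the formula for $\varphi^{(k)}$ from that for $\varphi^{(k-1)}$, with base case $\varphi^{(0)}=g(\bar\mu_\lambda)$. The same continuity-plus-domination argument shows that $\lambda\mapsto\varphi^{(n)}(\lambda)$ is continuous, so $\varphi\in C^n$.

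\textbf{Main obstacle.} The delicate step is the passage $h\to0$ under the integral. One needs pointwise convergence $\delta^k_m g(\bar\mu_{\lambda+th},x,y)\to\delta^k_m g(\bar\mu_\lambda,x,y)$, which comes from joint continuity of $\delta^k_m g$ and $\W_p$-continuity of the segment. One also needs an integrable dominating function for the total variation measure $|\mu'-\mu|^{\otimes k}$; I would take $C\prod_i(1+|x_i|^p)$, after interpreting the polynomial growth coordinatewise in the multi-variable derivative. The remaining step of identifying the remainder coefficient is routine.
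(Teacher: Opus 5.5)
Your reduction to $\varphi(\lambda)=g(\bar\mu_\lambda)$, followed by the one-dimensional Taylor formula, is correct. The paper gives no proof of its own here; it only cites Lemma 2.1 of \cite{chassagneux2022weak}, and your argument is the standard way to prove such expansions.

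\textbf{The remainder coefficient.} This step is not routine: your computation cannot be matched to the printed statement, and you should say so explicitly. Your argument yields the kernel $\frac{(1-\lambda)^{n-1}}{(n-1)!}$, which is the correct one; the printed $(n-2)!$ is a typo. The two constants coincide only for $n=2$, and $(n-2)!$ is meaningless for $n=1$. For $n\ge 3$ the printed identity is false. Take $g(\mu)=(\int f\,d\mu)^3$ with $f$ bounded and continuous, so that $\varphi(\lambda)=(a+\lambda b)^3$ with $a=\mu(f)$ and $b=(\mu'-\mu)(f)$. With $n=3$ the printed right-hand side equals $a^3+3a^2b+3ab^2+6b^3\int_0^1(1-\lambda)^2\,d\lambda=a^3+3a^2b+3ab^2+2b^3$, which differs from $(a+b)^3$ whenever $b\neq 0$. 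So conclude with $(n-1)!$. This is also the normalisation the paper itself uses for the remainder $R_n$ in the proof of Proposition \ref{prop:approx 1}.

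\textbf{The base case $k=1$.} The paper defines $\delta_m g$ only through a one-sided directional limit, not through the integrated identity you invoke. The repair is short. Apply the definition at the base point $\bar\mu_\lambda$, once toward $\mu'$ and once toward $\mu$. Since $\bar\mu_\lambda+s(\mu'-\bar\mu_\lambda)=\bar\mu_{\lambda+s(1-\lambda)}$ and $\bar\mu_\lambda+s(\mu-\bar\mu_\lambda)=\bar\mu_{\lambda-s\lambda}$, this gives both one-sided derivatives of $\varphi$ at $\lambda$, each equal to $\int\delta_m g(\bar\mu_\lambda,x)(\mu'-\mu)(dx)$. Your continuity-plus-domination argument shows this is continuous in $\lambda$, so $\varphi\in C^1$ and the induction starts.

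\textbf{Compactness of the segment.} It comes from continuity of $\lambda\mapsto\bar\mu_\lambda$ in $\W_p$, not from the uniform moment bound. That continuity is immediate, since $\int\phi\,d\bar\mu_\lambda$ is affine in $\lambda$ for every $\phi$ with $p$-growth.
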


\noindent Another useful result from Example $2.7$ \citeauthor{jourdain2021central} \cite{jourdain2021central} is the symmetry property of the $k-$th linear functional derivative.

\begin{Proposition}\label{prop:symm k-th lin deriv}
Let $ g : \Pc_p \big( S \big) \rightarrow \R $ be an $n-$times linearly differentiable map, such that $\delta_m^j g (\mu, \cdot) \in {\rm Pol}_p $, locally uniformly in $\mu$, and is measurable. Then, $\delta_m^j g (\mu, \cdot)$ is symmetric, that is to say, the following equality holds
\begin{equation*}
\begin{split}
\delta_m^j g (\mu, x)
=
\delta_m^j g (\mu, x_{\sigma})
\,\, \text{for all  $x \in S^j$ and $\sigma \in \mathfrak{S}_j$.}
\end{split}
\end{equation*}
where $ x_\sigma = ( x_{\sigma(i)} )_{ 1 \leq i \leq  j } $.
\end{Proposition}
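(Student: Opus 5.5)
We argue by induction on $j$, using only the recursive definition of the linear functional derivative and the fact that it is pinned down only up to an additive constant; we assume the standard normalisation $\int_S \delta^k_m g(\mu,x,y)\,\mu(\mathrm{d}x)=0$, which makes the derivatives unique. For $j=1$ there is nothing to prove. Symmetry within the last $j-1$ arguments of $\delta^j_m g$ follows from the induction hypothesis applied to $\delta^{j-1}_m g(\cdot,y)$, viewed as a function of the measure for fixed $y$. The permutations of $\mathfrak{S}_j$ are generated by those fixing the first slot together with the transposition of the first two slots. Applying the recursion at level $j-2$ to $y\mapsto \delta^{j-2}_m g(\mu,y)$, with the remaining $j-2$ variables frozen, therefore reduces everything to the case $j=2$: we must show $\delta^2_m G(\mu,x,y)=\delta^2_m G(\mu,y,x)$ for a functional $G$ (here $G=\delta^{j-2}_m g(\cdot,z)$ with $z$ fixed), twice linearly differentiable with polynomial growth.

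For the case $j=2$, fix $\mu$ and two measures $\nu_1,\nu_2\in\Pc_p(S)$. Consider the two-parameter function
$$\phi(s,t):=G\big(\mu+s(\nu_1-\mu)+t(\nu_2-\mu)\big),\qquad s,t\ge0,\ s+t\le1.$$
By definition, $\partial_s\phi(s,t)=\int\delta_m G(m_{s,t},x)(\nu_1-\mu)(\mathrm{d}x)$. Differentiating in $t$ with the recursion formula, and passing to the limit $\lambda\to0$ by dominated convergence (continuity in $m$ together with the locally uniform $p$-polynomial growth bound), gives
$$\partial_t\partial_s\phi=\iint\delta^2_mG(m_{s,t},x,y)(\nu_1-\mu)(\mathrm{d}x)(\nu_2-\mu)(\mathrm{d}y).$$
The analogous computation gives $\partial_s\partial_t\phi$ with the roles of $x$ and $y$ exchanged.

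Both mixed partials are continuous in $(s,t)$, again by continuity of $\delta^2_m G$ and uniform integrability from the polynomial growth. Schwarz's theorem then makes them equal, and at $(s,t)=(0,0)$ we obtain
$$\iint\big[\delta^2_mG(\mu,x,y)-\delta^2_mG(\mu,y,x)\big](\nu_1-\mu)(\mathrm{d}x)(\nu_2-\mu)(\mathrm{d}y)=0\quad\text{for all }\nu_1,\nu_2.$$
Choose $\nu_1=(1-\varepsilon)\mu+\varepsilon\delta_{x_0}$ and $\nu_2=(1-\varepsilon)\mu+\varepsilon\delta_{y_0}$, so that $\nu_i-\mu=\varepsilon(\delta_{\cdot_0}-\mu)$. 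The antisymmetric kernel $A(x,y):=\delta^2_mG(\mu,x,y)-\delta^2_mG(\mu,y,x)$ then satisfies
$$A(x_0,y_0)-\int A(x,y_0)\,\mu(\mathrm{d}x)-\int A(x_0,y)\,\mu(\mathrm{d}y)+\iint A\,\mathrm{d}\mu\,\mathrm{d}\mu=0.$$
The normalisation, together with the induction hypothesis applied to the inner variables, kills the three integral terms, so $A(x_0,y_0)=0$ for every pair $(x_0,y_0)$. Without the normalisation one still gets $A(x,y)=a(x)-a(y)$, which vanishes modulo the usual ambiguity of the derivative.

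The main obstacle is the regularity step: showing that $\phi$ is jointly $C^1$ with continuous mixed partials on the simplex. This needs the locally uniform growth bound to justify dominated convergence in the measure variable, and to handle the one-sided derivatives at the boundary. If the Schwarz argument at the boundary is awkward, we would instead integrate the recursion twice and compare $\phi(s,t)-\phi(s,0)-\phi(0,t)+\phi(0,0)$ computed in the two orders, divide by $st$, and let $s,t\to0$.
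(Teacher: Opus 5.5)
\textbf{Gap: the normalisation step.} Your reduction to $j=2$ and the Schwarz argument that gives $\iint A\,(\nu_1-\mu)(\mathrm{d}x)(\nu_2-\mu)(\mathrm{d}y)=0$ are sound. (The paper gives no argument of its own; it only cites Example 2.7 of Jourdain--Tse.) The gap is the claim that your normalisation kills the three integral terms.

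The condition $\int_S\delta^k_mg(\mu,x,y)\,\mu(\mathrm{d}x)=0$ controls integrals in the first (newest) slot only. But $\int A(x_0,y)\,\mu(\mathrm{d}y)$ contains $\int\delta^2_mG(\mu,x_0,y)\,\mu(\mathrm{d}y)$, which is an integral in the old slot.

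In the identity $\int\delta_mG(m,y)\,m(\mathrm{d}y)=0$ the reference measure moves with $m$. Differentiating it therefore produces a product-rule term. The linear derivative of this zero functional is $\int\delta^2_mG(\mu,x,y)\,\mu(\mathrm{d}y)+\delta_mG(\mu,x)$. It must be constant in $x$, and integrating against $\mu$ shows the constant is $0$. Hence $\int\delta^2_mG(\mu,x,y)\,\mu(\mathrm{d}y)=-\delta_mG(\mu,x)$.

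Substituting this, your identity gives $A(x_0,y_0)=\delta_mG(\mu,y_0)-\delta_mG(\mu,x_0)$, which is not zero. Under the normalisation you fixed, the statement is actually false. For $G(m)=\int f\,\mathrm{d}m$ one gets $\delta_mG(m,y)=f(y)-m(f)$ and $\delta^2_mG(m,x,y)=m(f)-f(x)$, which is not symmetric. The induction hypothesis on inner variables cannot rescue this, since for $j=2$ there are none. Your closing remark that $A(x,y)=a(x)-a(y)$ is the correct general conclusion. However, ``vanishes modulo the ambiguity'' has to become an explicit choice of versions that is consistent with the recursion at every order, and you have fixed precisely one for which it fails.

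\textbf{First repair: pin the derivatives.} Use a normalisation whose reference does not move with $m$. For example, pin every derivative at a fixed point $x_\star\in S$: $\delta^k_mg(\mu,x_\star,y)=0$ for all $\mu$ and $y\in S^{k-1}$. Centring against a fixed $\nu\in\Pc_p(S)$ works equally well. Then:
\begin{itemize}
\item Versions are still unique, and the normalisation is inherited by $G_z=\delta^{j-2}_mg(\cdot,z)$, so your reduction to $j=2$ goes through.
\item Since $m\mapsto\delta_mG(m,x_\star)\equiv0$, its derivative $\delta^2_mG(\mu,\cdot,x_\star)$ is constant and pinned at $x_\star$, hence identically $0$.
\item Therefore $A(\cdot,x_\star)=A(x_\star,\cdot)=0$.
\item Testing your bilinear identity against $\delta_{x_0}-\delta_{x_\star}$ and $\delta_{y_0}-\delta_{x_\star}$ (legitimate by bilinearity) yields $A(x_0,y_0)=0$.
\end{itemize}

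\textbf{Second repair: state existence of a symmetric version.} The paper only ever uses the fully mixed derivatives $\partial^{|i|}_{\mathbf{x}^i}\delta^\ell_mg$ with $i\in(\mathbb{N}^*)^\ell$. These annihilate every version ambiguity, since each ambiguity term depends on at most $\ell-1$ of the variables. They are therefore symmetric whichever version is used, which is all the paper needs.
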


We now introduce the gradient of a Fréchet differentiable function $F : \L^p(\mu) \rightarrow \R $.

\begin{Definition}\label{def:gradient_frechet}
Given a map $F : \mathbb{L}^p (\mu) \rightarrow \R$ for $p > 1$, which is Fréchet differentiable, we denote by $ \nabla F $ the unique element in $\mathbb{L}^{p'} (\mu)$ such that for $T, h \in \mathbb{L}^p (\mu)$, $F(T + h ) = F(T) + \E^{\mu} [ \nabla F (T) h ]+ \circ (\Vert h \Vert_{\mathbb{L}^p (\mu)}) $. 
\end{Definition}

\noindent 
In the following, for a function $ f : \X^k \rightarrow \R$, we define 
\begin{equation}\label{eqdef:copies}
\E^{ \mu^{\otimes k}} [ f(X)] := \int_{\X^k} f(x_1, \cdots, x_k) \mu(\mathrm{d}x_1) \otimes \cdot \otimes \mu(\mathrm{d}x_k) 
\end{equation}
which is the expectation of $k$ independent copies. 

\noindent For clarity, we will only deal with the case where $ E_{\W_p} = \R$ for the classical Wasserstein case and $ E_{\W^{\rm ad}_p} = \R \times \R $ for the adapted Wasserstein case.
All results extend to the multi-dimensional case using the corresponding tools of differential calculus.
We will also use the notation 
\begin{equation}\label{eqdef:causal gradient}
\partial_x^{\rm ad} \delta_m g 
:= 
\begin{bmatrix} 
\E ^\mu_1 [ \partial_{x_1} \delta_m g ]
\\
\partial_{x_2} \delta_m g 
\end{bmatrix}
.\end{equation}
 We set the two differential operators $\partial_x^{\W_p} = \partial_x$ and $ \partial_x^{\W_p^{\rm ad}} := \partial_x^{\rm ad} $.

 \noindent Now, set 
\begin{equation}\label{eqdef:w_p}
 w_p(x) := \text{sgn} (x) x^{p - 1}
\end{equation}
For $x \in \R$ and $y \in \R$, we have 
\begin{equation}\label{eqdef:definition w_p and equivalence}
w_p(x) = y \text{  if and only if  } w_{p'} (y) = x. 
\end{equation} 
Also, let $ k \in \llbracket 0 , n \rrbracket $, $ i = ( i_1, \cdots, i_k ) \in \mathbb{N}^k$ and $ F$ be a function in $C^k ( \R^\ell, \R )$.
For $ x \in \R^\ell $, define:
\begin{equation}\label{eqdef:diff_operator}
\partial^{\vert i \vert_1}_{ \mathbf{x}^i} 
:=
\partial^{\vert i \vert_1}_{ \Pi_{j =1}^k ( x^j )^{i_j}} 
.
\end{equation}
Let $X$ be a vector space, $r \in \R$ and $ n \in \N$. 
For $\mathbf{x} = (x_1, \cdots, x_n) \in X^n $, and $k \leq n$, we set 
\begin{equation}\label{eqdef:sommepondere}
S_k(r, \mathbf{x} ) = \sum_{i = 1}^k r^{i-1} x_i.
\end{equation}
Now, let $ F : X \rightarrow Y $ where $ X $ and $Y$ are vector spaces, define for $x \in X $ and $ h \in X $ 
\begin{equation}\label{eqdef:bracket}
\Delta_h [ F ] ( x ) 
:=
F ( x + h ) - F ( x ) 
.
\end{equation}
For a real number $p$ and $ i \in \mathbb{N}^k$, set 
\begin{equation}\label{eqdef:conjugate_exponent}
p_i := \frac{p}{ \vert i \vert_{\infty} }\,\,\text{if} \,\, i \neq 0 \,\, \text{and} \,\, p_i = \infty \,\, \text{otherwise}. 
\end{equation} 
Note that for $ \vert i \vert_1 < p $, we have $1 < p_i \leq \infty $. 
Finally, define
\begin{equation}\label{eqdef:def_Eell}
\mathcal{E}_\ell
:=
\lbrace 
h \in C^{\ell} ( \R, \R )
\,\,
\text{such that}
\,\,
h^{ ( k ) } \in {\rm Pol}_{p - k - 1}
\,\,
\text{for all} 
\,\,
0 \leq k \leq \ell
\rbrace
.
\end{equation}

\section{ Main results }\label{sec:Main results}

Let $1 <p$, and let $ 1 < p' < \infty $ be its conjugate exponent. Fix $\mu \in \Pc_p \big( E_{\mathbf{d}} \big)$, where $ \mathbf{d} $ is either $ \W_p $ or $ \W^{\rm ad}_p$. As mentioned before, we want to study the order $n$ expansion of 
$$
G_{ \mathbf{d} } (r) := \sup_{ \mu' \in B_\mathbf{d} ( \mu, r) } g( \mu' )
.$$ 
This section is divided into two parts. First, we derive the order-$n$ expansion for the classical and the adapted Wasserstein DRO. Second, we use those results to obtain explicit second-order expansions. We then provide a sufficient condition under which the martingale distributionally robust optimisation admits a second-order expansion.

\subsection{ Order $n$ expansion of the DRO under Wasserstein and adapted Wasserstein constraints.}

\begin{AssumptionA1} \label{ass:reformulation 1 wasserstein}
The measure $\mu$ is atomless, and the function $ g : \Pc( \R ) \rightarrow \R $ is continuous with respect to $\W_p$.
\end{AssumptionA1}

\begin{AssumptionA2} \label{ass:reformulation 1 adapted wasserstein}
The marginal $\mu \circ X_1^{-1} $ is atomless and the function $ g : \Pc( \R \times \R ) \rightarrow \R $ is continuous with respect to $ \W_p^{\rm ad}$.
\end{AssumptionA2}

\begin{AssumptionB1} \label{ass:reformulation 2 gen dist}
$\mu$, $g$, $n$, and $p$ satisfy
\begin{enumerate}[label=\textnormal{(\roman*)}]
\item \label{cond:regularity on g dist} For all $ 1 \leq \ell \leq n $, $ \delta^\ell_m g (\mu, \cdot) $ is $C^n$, with $ \partial^k_y \delta^\ell_m g$ $\mathbf{d}-$continuous.
\item \label{cond:estimate diff gen dist} For $ i \in \mathbb{N}^k$, 
$ \partial^{\vert i \vert_1}_{\mathbf{x}^i} \delta^{\ell}_m g \in {\rm Pol}_{ \vert i \vert_\infty \frac{p}{p'}}$.
\end{enumerate}
\end{AssumptionB1}

\begin{AssumptionC1} \label{ass:exist approx saddle point wasserstein}
There exists $ \kappa > 0 $ such that the mapping $g$ satisfies
\begin{enumerate}[label=\textnormal{(\roman*)}]
\item \label{cond:convexity wasserstein} 
$
\sup_{ \Vert T \Vert_{\mathbb{L}^p (\mu) } \leq 1 } \Big\{
\mathbb{E}^{\mu} \Big[ \big(D^2 \delta_m g \big) T^2 \Big]
+
\mathbb{E}^{\mu^{\otimes 2}} \Big[ \big( \partial^2_{x^1 x^2} \delta^2_m g \big) T ( X^1) T ( X^2 ) \Big] \Big\}
\leq 
-\kappa
.$
\item \label{cond:div by gradient wasserstein} $ \vert \partial_x \delta_m g (\mu, \cdot ) \vert \geq \kappa $ for some $\kappa > 0$, $\mu-$almost surely. 
\end{enumerate}
\end{AssumptionC1}

\begin{AssumptionC2} \label{ass:exist approx saddle point adapted wasserstein}
There exists $ \kappa > 0 $ such that the mapping $g$ satisfies
\begin{enumerate}[label=\textnormal{(\roman*)}]
\item \label{cond:convexity adapted wasserstein} $
\sup_{ \Vert T \Vert_{\mathbb{L}^p_{\rm ad} (\mu) } \leq 1 } \Big\{
\mathbb{E}^{\mu} \Big[ T \cdot \big( D^2 \delta_m g \big) T \Big]
+
\mathbb{E}^{\mu^{\otimes 2}} \Big[ \big( \partial^2_{x^2 x^1} \delta^2_m g \big) T ( X^1) \cdot T ( X^2 ) \Big] \Big\}
$
.
\item \label{cond:div by gradient adapted wasserstein} $ \vert \partial^{\rm ad}_x \delta_m g (\mu, \cdot ) \vert \geq \kappa $ for some $\kappa > 0$, $\mu-$almost surely. 
\end{enumerate}
\end{AssumptionC2}

\begin{Remark}
{
\rm 
We now study the implications of these assumptions in the linear case $ g(\mu) := \int f(x) \mu(\mathrm{d}x)$.

\noindent $\bullet$ Assumptions \hyperref[ass:reformulation 1 wasserstein]{${\rm A}_{\W_p}$}, \hyperref[ass:reformulation 1 adapted wasserstein]{${\rm A}_{\W^{\rm ad}_p}$} are clearly satisfied whenever $f \in {\rm Pol}_p$ and is continuous.

\noindent $\bullet$ Assumptions \hyperref[ass:reformulation 2 gen dist]{${\rm B}_{\mathbf{d}}$} are satisfied whenever $f \in C^n$, with $ D^{i} f \in {\rm Pol}_{p-i}$, where $ D^{i}$ is the differential of order $i$.

\noindent $\bullet$ Assumptions \hyperref[ass:exist approx saddle point wasserstein]{${\rm C}_{\W_p}$} or \hyperref[ass:exist approx saddle point adapted wasserstein]{${\rm C}_{\W^{\rm ad}_p}$} are satisfied whenever $f$ is $\kappa$ strongly concave on the support of $\mu$.
}
\end{Remark}
\noindent Define the following spaces, $\mathbb{L}^p_{\W_p} ( \mu ):= \mathbb{L}^p ( \mu ) $ and $ \mathbb{L}^p_{\W^{\rm ad}_p} ( \mu ) := \mathbb{L}^p_{\rm ad} (\mu)$. Define also the following polynomial operators, for $ T \in \mathbb{L}^p_{\W_p} ( \mu) $ 
\begin{equation}\label{eqdef:operator polyn}
L_{\W_p}^{n, r} (T) := 
g(\mu) + 
\sum_{\ell = 1}^{n} \frac{1}{\ell !}
\hat{\Lc}^{r, \ell}_{\W_p} (T)
\end{equation}
where $\hat{\Lc}^{r, \ell}_{\W_p} (T) 
:=
\sum_{ k = \ell }^{n} 
\sum_{  \substack{i \in (\mathbb{N}^*)^\ell \\ \vert  i \vert_1 = k}}
\frac{r^k}
{
 i !
}
\E^{\mu^{\otimes \ell}} \big[
\partial^{ k }_{\mathbf{x}^i} \delta^{\ell}_m g ( \mu, X ) \prod_{j=1}^{\ell} T ( X^j )^{i_j} \big]$ and 
$i! = \Pi_{j = 1}^{\ell} i_j $. For $ T = ( T_1, T_2 ) \in \mathbb{L}_{ \W^{\rm ad}_p}^p ( \mu ) $, 
$$L_{\W_p^{\rm ad}}^{n, r} (T)
:=
g(\mu) + 
\sum_{\ell = 1}^{n} \frac{1}{\ell !}
\hat{\Lc}^{r, \ell}_{\W^{\rm ad}_p} (T)
$$
where $$\hat{\Lc}^{r, \ell}_{\W^{\rm ad}_p} (T) :=
\sum_{ k = \ell}^{n} 
\sum_{ \substack{i \in ( \mathbb{N}^*)^\ell 
\\ 
\vert i \vert_1 = k}}
\sum_{ \substack{ 
j \in \N^\ell \\ j \leq i }} 
\frac{ r^k }{\ell! i!}
\E^{\mu^{\otimes\ell}} \Big[ \big( 
\partial^{k}_{ \mathbf{x}_1^j \mathbf{x}_2^{i-j} }
 \delta_m^\ell g 
 \big)
 \prod_{ m = 1 }^{\ell} 
 T_1 ( X^m _1 )^{j_m} 
 T_2 ( X^m )^{i_m - j_m}
\Big]
.$$

\begin{Proposition}\label{prop:approx 1}
Let $ \mathbf{d} = \W_p$ or $\W_p^{\rm ad}$. Assume that $g$ and $\mu$ satisfy Assumption \hyperref[ass:reformulation 1 wasserstein]{${\rm A}_\mathbf{d}$}, then we have
\begin{equation}\label{eq:first reform gen dist}
G_{ \mathbf{d} }(r) 
=
\sup_{ \substack{ T \in \mathbb{L}^p_{\mathbf{d}} ( \mu ) \\ \Vert T \Vert_{\mathbb{L}^p_{\mathbf{d}} ( \mu )} \leq 1 } } g( \mu^{ r, T} ) \,\, \text{where} \,\, \mu^{ r, T} := \mu \circ ( X + r T )^{-1}
.\end{equation}
Furthermore, if $g$ and $\mu$ also satisfy Assumption \hyperref[ass:reformulation 2 gen dist]{${\rm B}_\mathbf{d}$}, we have
\begin{equation}\label{eq:formulation 2 gen dist}
G_{ \mathbf{d} } (r) 
 =
 \sup_{ \substack{T \in \mathbb{L}^p_{\mathbf{d}} ( \mu ) \\ \Vert T \Vert_{\mathbb{L}^p_{\mathbf{d}} ( \mu ) } \leq 1} } L^{n, r}_{\mathbf{d}} \big( T \big)  
+
\circ ( r^n ) 
\end{equation}
where $\circ ( r^n ) $ does not depend on $T$.
\end{Proposition}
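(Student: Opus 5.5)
The argument has two parts: a Monge-type reformulation of the ball, followed by a Taylor expansion that is uniform over the admissible perturbations $T$.

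\textbf{Step 1: reformulation \eqref{eq:first reform gen dist}.}
\begin{itemize}
\item[($\ge$)] Take $T$ with $\Vert T\Vert_{\mathbb{L}^p_{\mathbf{d}}(\mu)}\le1$. The coupling $\mu\circ(X,X+rT)^{-1}$ has cost $r^p\E^\mu|T|^p\le r^p$, so $\mu^{r,T}\in B_{\mathbf{d}}(\mu,r)$. In the adapted case $T=(T_1(X_1),T_2(X))$ makes this coupling bi-causal. Indeed, $X'_1=X_1+rT_1(X_1)$ is $\sigma(X_1)$-measurable. Also $X'=(X_1,X_2)+rT(X)$ is a function of $X$, so conditioning on $X'_1$ does not destroy causality in the reverse direction; I will check this compatibility explicitly.
\item[($\le$)] Fix $\mu'$ in the ball and $\varepsilon>0$.
 \begin{itemize}
 \item Wasserstein case: since $\mu$ is atomless on $\R$, Monge maps are dense in couplings. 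So there is $T$ with $\mu\circ(X+rT)^{-1}$ $\W_p$-close to $\mu'$ and $\Vert T\Vert\le1+\varepsilon$.
 \item Adapted case: I will invoke the analogous density of adapted Monge maps in bi-causal couplings when $\mu_1$ is atomless, as in Bartl et al.
 \end{itemize}
 Continuity of $g$ for $\mathbf{d}$ (Assumption ${\rm A}_{\mathbf{d}}$) then transfers the values. The norm $1+\varepsilon$ is handled by rescaling $T/(1+\varepsilon)$: the images are $\mathbf{d}$-close, and continuity again applies.
\end{itemize}

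\textbf{Step 2: Taylor expansion \eqref{eq:formulation 2 gen dist}.} Apply Proposition \ref{prop:Taylor expansion measure order n} at order $n+1$ between $\mu$ and $\mu^{r,T}$. Each term has the form
$$\int\delta^\ell_m g(\mu,x)\,(\mu^{r,T}-\mu)^{\otimes\ell}(dx)=\E^{\mu^{\otimes\ell}}\Big[\Delta_{rT(X^1)}\cdots\Delta_{rT(X^\ell)}\big[\delta^\ell_m g(\mu,\cdot)\big](X^1,\dots,X^\ell)\Big],$$
using the increment operators \eqref{eqdef:bracket}. I then Taylor-expand each increment in its variable up to total order $n$ with Lagrange remainders, using Assumption ${\rm B}_{\mathbf{d}}$(i).
\begin{itemize}
\item A multi-index $i\in(\mathbb{N}^*)^\ell$ (each $i_j\ge1$, since the increment kills constants) with $|i|_1=k\le n$ produces exactly the term of $\hat{\Lc}^{r,\ell}_{\mathbf{d}}$ with factor $r^k/i!$.
\item In the adapted case I expand in $(x_1,x_2)$ jointly, which yields the sum over $j\le i$.
\end{itemize}

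\textbf{Step 3: uniform remainder.} Every remainder is $r^{n+1}$ times a derivative evaluated at intermediate points, multiplied by $\prod_j|T(X^j)|^{i_j}$ with $|i|_1=n+1$. By ${\rm B}_{\mathbf{d}}$(ii) the derivative has growth ${\rm Pol}_{|i|_\infty p/p'}$ in the intermediate points. These points are dominated by $|X^j|+r|T(X^j)|$. Applying Hölder with exponents $p_i$ from \eqref{eqdef:conjugate_exponent} across the independent copies bounds the remainder by $C r^{n+1}(1+\E^\mu|X|^p+\Vert T\Vert^p)^{\cdot}$, uniformly over $\Vert T\Vert\le1$. The Taylor remainder at the $\delta^{n+1}_m g(\bar\mu_\lambda,\cdot)$ level needs local uniformity in the measure. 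This is available because $\bar\mu_\lambda$ stays in a $\W_p$-bounded set with uniformly controlled $p$-moments.

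Since all error terms are $o(r^n)$ uniformly in $T$, taking the supremum commutes with the error, which gives \eqref{eq:formulation 2 gen dist}.

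\textbf{Main obstacle.} I expect the exponent bookkeeping in Step 3 to be the hardest part. Mixed products $\prod_j|T(X^j)|^{i_j}$ times polynomial growth must remain integrable given only $T\in\mathbb{L}^p$. This forces the growth restriction $|i|_\infty p/p'$ and a careful Hölder split, possibly needing $n<p$ so that $p_i>1$. The density argument in the adapted case of Step 1 is the secondary difficulty, and I would cite it rather than reprove it.
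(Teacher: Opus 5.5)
You have the right overall structure: the Monge reformulation, then Proposition~\ref{prop:Taylor expansion measure order n}, then the increment and multi-index bookkeeping of Lemma~\ref{Lemma: expansion wrt graph}. Your Wasserstein-case rescaling $T/(1+\varepsilon)$ is fine.

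\textbf{Main gap: Steps 2--3 use one order of regularity too many.} You expand to order $n+1$. The measure remainder then involves $\delta^{n+1}_m g(\bar\mu_\lambda,\cdot)$, and the spatial Lagrange remainders involve derivatives of total order $n+1$ multiplied by $\prod_j|T(X^j)|^{i_j}$ with $|i|_1=n+1$. Making the latter integrable would also need roughly $p>n+1$. Assumption ${\rm B}_{\mathbf d}$ only provides $\delta^\ell_m g$ for $\ell\le n$, each $C^n$ in space, so no $O(r^{n+1})$ remainder is available. If you stop at order $n$, the remainders are $r^n$ times \emph{differences}:
\begin{itemize}
\item $\Delta_{rtT^{\otimes I}}\big[\partial^n_{\mathbf{x}^i}\delta^\ell_m g(\mu,\cdot)\big]$;
\item $\partial^n\delta^n_m g(\bar\mu_t,\cdot)-\partial^n\delta^n_m g(\mu,\cdot)$.
\end{itemize}
Polynomial growth plus H\"older only shows these are bounded.

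The missing idea is that these differences vanish in $\mathbb{L}^{p_i'}(\mu^{\otimes\ell})$ \emph{uniformly} over $\Vert T\Vert_{\mathbb{L}^p(\mu)}\le 1$. The paper gets this from Lemma~\ref{lemma: cont thm Lp} and Corollary~\ref{corr:cont map thm measure}: for $F$ continuous with $p/q$-growth, $\sup_{\Vert T\Vert\le 1}\Vert F(X+rT)-F(X)\Vert_{\mathbb{L}^q(\mu)}\to 0$. The ingredients are:
\begin{itemize}
\item uniform continuity of $F$ on compacts;
\item the bound $\mu(r|T|>\delta)\le r^p\delta^{-p}$;
\item integrability of $1+|X|^p$, together with $\E^\mu[r^p|T|^p]\le r^p$.
\end{itemize}
Combine this with the continuity of $\partial^k_y\delta^n_m g$ in the measure argument (in the Wasserstein case $\W_p(\bar\mu_t,\mu)\le r$ for every admissible $T$) and dominated convergence in $(t,\lambda)$. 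This gives an $o(r^n)$ independent of $T$ under the stated assumptions.

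\textbf{Secondary gap: Step 1 ($\ge$) in the adapted case.} The coupling $\mu\circ(X,X+rT)^{-1}$ is causal but in general not bicausal, so the compatibility check you defer will fail. If $x_1\mapsto x_1+rT_1(x_1)$ is not injective, then $X_1$ is not a function of $X'_1$, and $X'_2$ can reveal it. For example, let $\mu$ be the law of $(U,U)$ with $U$ uniform on $[-1,1]$, and choose $x_1+rT_1(x_1)=|x_1|$, $T_2=0$. Then $X'_2=X_1$. With $f(x')=\mathrm{sgn}(x'_2)$ and $g=\mathrm{sgn}$, you get $\E[f(X')g(X_1)\,|\,X'_1]=1$, whereas $\E[f(X')\,|\,X'_1]\,\E[g(X_1)\,|\,X'_1]=0$. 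Non-injective $T_1$ of this kind lie in the unit ball for every $r$: take $T_1$ large on a small set.

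So this coupling does not justify $\mu^{r,T}\in B_{\W^{\rm ad}_p}(\mu,r)$. The paper avoids the issue in two steps:
\begin{itemize}
\item density of bicausal couplings among causal ones (Lemma~3.1 of \cite{bartlsensitivityadapted}) identifies the supremum over the adapted ball with the supremum over causal couplings of cost at most $r^p$;
\item density of causal Monge couplings (\cite{beiglbock2018denseness}) then reduces to adapted maps $T$.
\end{itemize}
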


\begin{Proposition}\label{prop:expansion order n}
Assume that $g$, $ \mu $, $n$ and $p$ satisfy Assumptions \hyperref[ass:reformulation 1 wasserstein]{${\rm A}_\mathbf{d}$}, \hyperref[ass:reformulation 2 gen dist]{${\rm B}_\mathbf{d}$} and \hyperref[ass:exist approx saddle point wasserstein]{${\rm C}_\mathbf{d}$}. There exists $ \TT \in \big( \mathbb{L}^p_{\mathbf{d}} ( \mu ) \big)^n$ and $ \mathbf{ \lambda} := (\lambda_1, \cdots, \lambda_n) \in \R^n$ such that $ \lambda_1 > 0 $, the following expansion holds
\begin{equation*}
G_{ \mathbf{d} }( r) 
= 
L^{n,r}_{\mathbf{d}} \big( S_{n} (r, \TT ) \big)
+
\circ ( r^n )
,\end{equation*}
and 
\begin{equation}\label{eq:FOC gen dist T and lambd}
\text{for} \,\, 1 \leq k \leq n \,\,, \,\,
\left\{
 \begin{array}{ll}
 \nabla_T
L^{n, r}_{\mathbf{d}} \big( S_{k}(r, \TT ) , r S_k(r, \mathbf{\lambda} ) \big) - p w_p \big( S_{k}(r, \TT ) \big)
=
\circ_{{\mathbb{L}^{p'} ( \mu ) }} ( r^{k} ) \\
\big\Vert S_{k}(r, \TT ) \big\Vert^p_{\mathbb{L}^p_{\mathbf{d}} ( \mu )}
=
1 
+
\circ ( r^{k-1} ) 
 \end{array}
\right.
\end{equation}
where $w_p$ is defined by \eqref{eqdef:w_p} and $\nabla_T$ is the Fréchet gradient of the map $L$ with respect to $T$, which can be identified as an element of $ \mathbb{L}^{p'}_{\mathbf{d}} (\mu)$. 
\end{Proposition}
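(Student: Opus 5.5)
By Proposition \ref{prop:approx 1}, it suffices to work with the finite-dimensional-in-$r$ polynomial problem
$$
\sup_{\Vert T \Vert_{\mathbb{L}^p_{\mathbf{d}}(\mu)}\le 1} L^{n,r}_{\mathbf{d}}(T),
$$
because the $\circ(r^n)$ remainder in \eqref{eq:formulation 2 gen dist} is uniform in $T$. The plan is to build an approximate maximiser order by order in $r$ and to certify near-optimality with a Lagrangian (saddle-point) argument. Write the Lagrangian $\mathcal{L}(T,\lambda) := L^{n,r}_{\mathbf{d}}(T) - \lambda\big(\Vert T\Vert^p_{\mathbb{L}^p_{\mathbf{d}}(\mu)} - 1\big)$. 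The Fréchet gradient of $\Vert T\Vert^p$ is $p\,w_p(T)$. Since $L^{n,r}_{\mathbf{d}}(T) - g(\mu) = O(r)$, the multiplier scales like $r$, which is why it appears as $rS_k(r,\lambda)$ in \eqref{eq:FOC gen dist T and lambd}. In what follows I read the first line there as $\nabla_T L - rS_k(r,\lambda)\,p\,w_p(S_k)$ (dividing by $r$ where needed).

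\textbf{Step 1: order 1.} Here $\nabla_T L^{n,r}_{\mathbf{d}}(T) = r\,\partial^{\mathbf{d}}_x\delta_m g(\mu,X) + O(r^2)$, where the $O(r^2)$ terms are controlled in $\mathbb{L}^{p'}$ by the polynomial growth in \ref{cond:estimate diff gen dist} of Assumption ${\rm B}_{\mathbf{d}}$ together with Hölder's inequality with exponents $p_i$. The leading-order equations are $\partial_x\delta_m g = \lambda_1 p\,w_p(T_1)$ and $\Vert T_1\Vert_p = 1$. Using \eqref{eqdef:definition w_p and equivalence}, these give
$$
T_1 = w_{p'}\big(\partial_x\delta_m g/(p\lambda_1)\big),
$$
with $\lambda_1 > 0$ fixed by the normalisation. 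Condition \ref{cond:div by gradient wasserstein} of Assumption ${\rm C}_{\mathbf{d}}$ guarantees that $T_1$ is nondegenerate and that $\lambda_1 > 0$. In the adapted case, one projects onto $\mathbb{L}^p_{\rm ad}$ by conditioning the first component on $X_1$, which produces $\partial^{\rm ad}_x$.

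\textbf{Step 2: induction.} Suppose $(T_j,\lambda_j)_{j<k}$ have been built so that \eqref{eq:FOC gen dist T and lambd} holds up to order $k-1$. Insert $S_k = S_{k-1} + r^{k-1}T_k$ and $S_k(r,\lambda)$ into the first-order system and expand in powers of $r$. The coefficient of $r^{k}$ is linear in $(T_k,\lambda_k)$. The linear operator involved is the Hessian of $\mathcal{L}$ at $(T_1,\lambda_1)$:
$$
A h := D^2\delta_m g\, h + \E^{\mu}\big[\partial^2_{x^1x^2}\delta^2_m g(X,\cdot)\,h\big] - \lambda_1 p(p-1)|T_1|^{p-2}h,
$$
bordered by the linearised constraint $\E^\mu[w_p(T_1)h]$. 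The right-hand side is a known polynomial expression in $T_1,\dots,T_{k-1}$ that lies in $\mathbb{L}^{p'}$ by Assumption ${\rm B}_{\mathbf{d}}$.

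Solvability of this bordered system is the main obstacle. I would attack it variationally rather than by inverting $A$ directly. Define $T_k$ as the maximiser of the concave quadratic-plus-linear functional whose Euler–Lagrange equation is the linear system above, restricted to the tangent hyperplane $\{h : \E[w_p(T_1)h] = 0\}$. Condition \ref{cond:convexity wasserstein} of Assumption ${\rm C}_{\mathbf{d}}$ gives strict concavity of the $g$-part, and $-\lambda_1|T_1|^{p-2}h^2 \le 0$ only helps. Coercivity therefore holds and a maximiser exists; this is immediate in the Hilbert case $p=2$, and for $p\neq 2$ one uses a weighted space with the $|T_1|^{p-2}$ weight. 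The multiplier $\lambda_k$ is then fixed by the linearised norm constraint. The second line of \eqref{eq:FOC gen dist T and lambd} follows because the expansion of $\Vert S_k\Vert^p$ agrees with 1 up to the orders already imposed.

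\textbf{Step 3: value.} Since $\Vert S_n\Vert^p = 1 + \circ(r^{n-1})$, rescaling $S_n$ by $\Vert S_n\Vert^{-1}$ gives a feasible candidate. This changes $L^{n,r}_{\mathbf{d}}$ by at most $O(r)\cdot\circ(r^{n-1}) = \circ(r^n)$, which yields the lower bound. For the upper bound, take any feasible $T$ and use concavity from ${\rm C}_{\mathbf{d}}$: near $S_n$ the map $\mathcal{L}(\cdot, rS_n(r,\lambda))$ is strongly concave with gradient $\circ(r^n)$. This gives $L^{n,r}_{\mathbf{d}}(T) \le \mathcal{L}(T,\cdot) \le \mathcal{L}(S_n,\cdot) + \circ(r^n)$. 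Feasible $T$ far from $S_n$ are excluded because their value is smaller by order $r$ at first order, by the strict optimality of $T_1$ (equality in Hölder's inequality). Combining both bounds with Proposition \ref{prop:approx 1} gives the claimed expansion.
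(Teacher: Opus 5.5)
Your Step 2 sets up the wrong linear problem for $(T_k,\lambda_k)$, and this is a genuine gap.

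\textbf{The order mismatch.} In $\nabla_T L^{n,r}_{\mathbf{d}}$, the second-order part of $g$ (the terms $D^2\delta_m g$ and $\partial^2_{x^1x^2}\delta^2_m g$) carries a factor $r^2$. The constraint part carries the multiplier $rS_k(r,\lambda)=r\lambda_1+O(r^2)$. Since $T_k$ enters $S_k$ with weight $r^{k-1}$, at order $r^k$ the new unknowns appear only through $-p(p-1)\lambda_1|T_1|^{p-2}T_k$ and $-p\lambda_k w_p(T_1)$. The action of the $g$-Hessian on $T_k$ first appears at order $r^{k+1}$, where it becomes known data for the next step.

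For $k=2$, the $r^2$-coefficient is exactly $\partial_{xx}\delta_m g\,T_1+\hat{\mathbb{E}}[\partial_{x\hat x}\delta^2_m g(\mu,\hat X,X)T_1(\hat X)]-p\lambda_2w_p(T_1)-p(p-1)\lambda_1|T_1|^{p-2}T_2$. The Hessian acts on $T_1$, not on $T_2$. Your operator $A$ therefore mixes two different orders in $r$.

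\textbf{Consequence.} A $T_k$ obtained from your bordered system leaves the nonzero residual $D^2\delta_m g\,T_k+\hat{\mathbb{E}}[\partial_{x\hat x}\delta^2_m g(\mu,\hat X,X)T_k(\hat X)]$ at order $r^k$. Later corrections enter at higher orders and cannot remove it. So the first line of \eqref{eq:FOC gen dist T and lambd} fails from $k=2$ on. Your Step 3 upper bound needs $\nabla_T\mathcal{L}(S_n,rS_n(r,\lambda))=\circ(r^n)$, so for $n\ge 3$ it gives nothing beyond $O(r^2)$.

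\textbf{The correct system.} The order-$r^k$ system is diagonal:
$p\lambda_kw_p(T_1)+p(p-1)\lambda_1|T_1|^{p-2}T_k=R_k$ and $p\,\mathbb{E}^\mu[w_p(T_1)T_k]=c_k$.
Here $R_k\in\mathbb{L}^{p'}(\mu)$ and $c_k\in\R$ depend only on $T_1,\dots,T_{k-1}$ and $\lambda_1,\dots,\lambda_{k-1}$. Pairing the first equation with $T_1$ and using $\mathbb{E}^\mu[w_p(T_1)T_1]=1$ gives $\lambda_k$ in closed form. Then $T_k$ follows by pointwise division by $p(p-1)\lambda_1|T_1|^{p-2}$.

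No coercivity or weighted-space argument is needed. What is needed is that this division keeps $T_k$ in $\mathbb{L}^p(\mu)$. That is the real role of Condition (ii) of ${\rm C}_{\mathbf{d}}$: $|\partial^{\mathbf{d}}_x\delta_m g|\ge\kappa$ bounds $|T_1|$ from below. It is not there merely to make $\lambda_1>0$.

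\textbf{Where Condition (i) enters.} Condition (i) plays no role in the construction. It is used only in the value step, where it makes $L^{n,r}_{\mathbf{d}}$ concave on a (slightly enlarged) ball for small $r$. With that global concavity and $\Lambda:=rS_n(r,\lambda)>0$, your chain $L^{n,r}_{\mathbf{d}}(T)\le\mathcal{L}(T,\Lambda)\le\mathcal{L}(S_n,\Lambda)+\circ(r^n)=L^{n,r}_{\mathbf{d}}(S_n)+\circ(r^n)$ is valid, and the near/far splitting is unnecessary. Once the $T_k$ are built correctly, this is a legitimate and slightly cleaner alternative to the paper's H\"older-alignment computation at $S_n/\Vert S_n\Vert$.
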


\subsection{Explicit computation of second-order expansions.}

All previous formulas leave little room for an explicit expression. Here, we present the second-order expansion of the DRO.

\begin{Proposition}\label{prop:order 2 expansion gen dist}
Let $ \mathbf{d} \in \{ \W_p, \W_p^{\rm ad} \} $ and let $g$, $\mu$, and $p$ satisfy Assumptions \hyperref[ass:reformulation 1 wasserstein]{${\rm A}_{ \W_p } $}, \hyperref[ass:reformulation 2 gen dist]{${\rm B}_{ \W_p }$} and \hyperref[ass:exist approx saddle point wasserstein]{${\rm C}_{ \W_p }$} for $n = 2$. Define 
\begin{equation}\label{eqdef:first order transport}
T^1_{\mathbf{d}}
:= \frac{ w_{p'} \big( \partial^{\mathbf{d}}_x \delta_m g \big) }{ \Vert \partial^{\mathbf{d}}_x \delta_m g \Vert_{\mathbb{L}^{p'}_{\mathbf{d}} ( \mu ) }^{p'/p} }
\end{equation}
where for $ z:= (z_1, z_2) \in \R^2$, $w_p(z) := \big( w_p(z_1), w_p(z_2) \big)$, $w_p$ is defined by \eqref{eqdef:w_p} and $\partial^{\mathbf{d}}$ is defined by \eqref{eqdef:causal gradient}. We have, 
\begin{align*}
G^{\mathbf{d}} ( r ) 
&=
L^{2, r}_{\mathbf{d}} \big( T^1_{\mathbf{d}} \big)
+
\circ ( r^2 ) 
.\end{align*}
\end{Proposition}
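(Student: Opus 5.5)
By Proposition \ref{prop:approx 1} with $n=2$, it suffices to show
$$\sup_{\Vert T\Vert_{\mathbb{L}^p_{\mathbf{d}}(\mu)}\le 1} L^{2,r}_{\mathbf{d}}(T) = L^{2,r}_{\mathbf{d}}(T^1_{\mathbf{d}}) + \circ(r^2).$$
The lower bound is immediate once we check $\Vert T^1_{\mathbf{d}}\Vert_{\mathbb{L}^p_{\mathbf{d}}(\mu)}=1$. Since $|w_{p'}(z)|^p=|z|^{(p'-1)p}=|z|^{p'}$, we get $\Vert w_{p'}(\partial^{\mathbf{d}}_x\delta_m g)\Vert^p_{\mathbb{L}^p}=\Vert \partial^{\mathbf{d}}_x\delta_m g\Vert^{p'}_{\mathbb{L}^{p'}}$, which is exactly the $p$-th power of the normalisation. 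In the adapted case we use the componentwise definition together with the $\mathbb{L}^p_{\rm ad}$ norm. So the whole content is the upper bound.

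For the upper bound, write $L^{2,r}_{\mathbf{d}}(T)=g(\mu)+r\,\ell(T)+\frac{r^2}{2}Q(T)$. Here the linear part is $\ell(T)=\E^\mu[\partial^{\mathbf{d}}_x\delta_m g\cdot T]$. In the adapted case, conditioning on $X_1$ turns the $T_1$ term into $\E^{\mu_1}[\partial_{x_1}\delta_m g]$, which is why the causal gradient \eqref{eqdef:causal gradient} appears. The quadratic part is
$$Q(T)=\E^\mu[T\cdot D^2\delta_m g\,T]+\E^{\mu^{\otimes2}}[\partial^2_{x^1x^2}\delta^2_m g\,T(X^1)T(X^2)],$$
and Assumption \hyperref[ass:reformulation 2 gen dist]{${\rm B}_\mathbf{d}$}\ref{cond:estimate diff gen dist} together with Hölder makes it a bounded quadratic form on $\mathbb{L}^p_{\mathbf{d}}$. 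By Hölder's equality case, $T^1_{\mathbf{d}}$ is the maximiser of $\ell$ on the unit ball, with value $\ell^*=\Vert\partial^{\mathbf{d}}_x\delta_m g\Vert_{\mathbb{L}^{p'}}$, and $\ell^*\ge\kappa>0$ by \ref{cond:div by gradient wasserstein}.

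Now let $T_r$ be an $r^3$-optimiser. Comparing with $T^1_{\mathbf{d}}$ gives
$$r\big(\ell^*-\ell(T_r)\big)\le \tfrac{r^2}{2}\big(Q(T_r)-Q(T^1_{\mathbf{d}})\big)+r^3 = O(r^2),$$
so $\ell^*-\ell(T_r)=O(r)$. The key step is a \emph{stability estimate} for the Hölder maximiser: uniform convexity of $\mathbb{L}^p$ for $1<p<\infty$ (Clarkson, or the modulus of smoothness of $\mathbb{L}^{p'}$) yields $\Vert T_r-T^1_{\mathbf{d}}\Vert_{\mathbb{L}^p}\to0$ as $\ell(T_r)\to\ell^*$, since the maximiser is unique. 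Boundedness of $Q$ then gives $Q(T_r)=Q(T^1_{\mathbf{d}})+o(1)$. Combining this with $\ell(T_r)\le\ell^*$,
$$L^{2,r}_{\mathbf{d}}(T_r)\le g(\mu)+r\ell^*+\tfrac{r^2}{2}Q(T^1_{\mathbf{d}})+\circ(r^2)=L^{2,r}_{\mathbf{d}}(T^1_{\mathbf{d}})+\circ(r^2),$$
which is the desired upper bound.

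The main obstacle is the adapted case. There we must check that $T^1_{\mathbf{d}}$ indeed lies in $\mathbb{L}^p_{\rm ad}$, i.e. that its first component is a function of $X_1$ only. We must also check that the Hölder duality on $\mathbb{L}^p_{\rm ad}$, with its split norm, has the causal gradient in $\mathbb{L}^{p'}(\mu_1)\times\mathbb{L}^{p'}(\mu)$ as dual element and that the dual norm is the matching $p'$-sum norm. This should be handled by conditioning on $X_1$ and applying uniform convexity to the product space, which is still uniformly convex with the $\ell^p$-sum norm. The concavity Assumption \hyperref[ass:exist approx saddle point wasserstein]{${\rm C}_\mathbf{d}$}\ref{cond:convexity wasserstein} is not needed at order $2$ in this argument; it only guarantees that the optimiser is genuinely near $T^1_{\mathbf{d}}$, which matters for the next-order corrections of Proposition \ref{prop:expansion order n}.
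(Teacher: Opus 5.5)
Your proof is correct, but it takes a different route from the paper's. The paper obtains the statement as the case $n=2$ of Proposition~\ref{prop:expansion order n}, in three steps. First, it solves the first-order conditions at orders $r$ and $r^2$, producing $(T^1,\lambda_1)$ and an explicit corrector $(T^2,\lambda_2)$; the formula for $T^2$ divides by $|T^1|^{p-2}$, and this is where ${\rm C}_{\mathbf d}$(ii) enters. Second, it uses the concavity ${\rm C}_{\mathbf d}$(i) of $L^{2,r}_{\mathbf d}$ for small $r$ to certify that the normalised $T^1+rT^2$ is an $\circ(r^2)$-maximiser. Third, it notes that the second-order normalisation constraint $\E^\mu[w_p(T^1)\cdot T^2]=0$ is equivalent to $\E^\mu[\partial^{\mathbf d}_x\delta_m g\cdot T^2]=0$, since $\partial^{\mathbf d}_x\delta_m g=p\lambda_1 w_p(T^1)$ with $\lambda_1>0$. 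Hence $T^2$ drops out of $L^{2,r}_{\mathbf d}$ up to $O(r^3)$.

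You bypass the saddle-point construction entirely. The H\"older equality case plus uniform convexity of $\mathbb{L}^p(\mu)$ (resp. $\mathbb{L}^p(\mu_1)\oplus_p\mathbb{L}^p(\mu)$) forces every near-maximiser to converge to $T^1_{\mathbf d}$. Continuity of the bounded quadratic form $Q$ then finishes the argument. Your argument is shorter and needs only Proposition~\ref{prop:approx 1}. It shows the result holds without ${\rm C}_{\mathbf d}$(i), and with ${\rm C}_{\mathbf d}$(ii) weakened to $\partial^{\mathbf d}_x\delta_m g\not\equiv 0$, because you never invert $|T^1|^{p-2}$ or differentiate $w_p$.

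What the paper's route buys is the explicit second-order correction $(T^2,\lambda_2)$ of the optimal direction and multiplier. It is also the $n=2$ instance of a recursive scheme that extends to higher orders. There a rate-free stability argument is no longer enough: at order $r^3$ the $O(r)$ displacement of the optimiser contributes and must be identified.

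One small correction to your closing remark: your own argument already shows that near-optimisers are close to $T^1_{\mathbf d}$ without ${\rm C}_{\mathbf d}$(i). In the paper, concavity is what makes the approximate first-order conditions sufficient for approximate global optimality.
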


\begin{Remark}
{\rm
In the classical Wasserstein case $\mathbf{d} = \W_p$ and for a linear functional $g(\mu) = \int f(x, a) \mu(\mathrm{d}x)$, a second-order upper bound for $G^{\W_p}$ was already obtained in Remark $10$ of \citeauthor{bartl2021sensitivity} \cite{bartl2021sensitivity}, under a uniform growth bound on the Hessian of $f$. Proposition \ref{prop:order 2 expansion gen dist} refines this into an exact second-order asymptotic equality, valid for general functionals $g$ satisfying Assumptions \hyperref[ass:reformulation 1 wasserstein]{${\rm A}_\mathbf{d}$}, \hyperref[ass:reformulation 2 gen dist]{${\rm B}_\mathbf{d}$} and \hyperref[ass:exist approx saddle point wasserstein]{${\rm C}_\mathbf{d}$}, and extends it to the adapted Wasserstein distance.
}
\end{Remark}

\begin{Remark}
{\rm
The case $n=2$ is interesting because we have a simplification of the problem. Indeed, by studying the equation \eqref{eq:FOC gen dist T and lambd}, the norm condition implies that $ L^{1,r}(T^2) = 0$, simplifying the expression of the second-order development. This remark can be generalised to higher-order expansions, as the study of the equation \eqref{eq:FOC gen dist T and lambd} allows for some simplifications of the expansion expression given by Proposition \ref{prop:expansion order n}. However, even for $n = 3$, the expression becomes too intricate and hardly tractable. 
}
\end{Remark}

\subsection{Martingale case}

\noindent We now move on to the martingale case. Define 
$$
G^{\rm M}_{\rm ad}
:= 
\sup_{\mu' \in B^{\rm M}_{\W_p^{\rm ad}} (\mu, r) } g(\mu ')
$$
where $ B^{\rm M}_{\W_p^{\rm ad}} (\mu, r) $ is defined by \eqref{eqdef:Mart ad wass ball}. Now, for $ h_i \in \mathcal{E}_{n-i+1}$, $ 1 \leq i \leq n$, and $ T \in \mathbb{L}^p_{{\rm ad}} ( \mu )$, set $ \textbf{h}_n = ( h_1, \cdots, h_n ) $ and define 
\begin{equation}\label{eqdef:Gamm and L martingale}
\begin{split}
&\Gamma^{n, r} ( \textbf{h}_n , T ) 
:=
\sum_{k = 0}^{n-1} 
\sum_{j = 1}^{n-k}
\frac{r^{k + j}}{ k !} 
\E^{\mu} \Big[ 
h_j^{\left( k \right) } ( X_1 ) T_1( X_1 )^{k} \big( T_2 - T_1(X_1) \big) \Big]
\\
&\mathcal{L}^{n, r}_{\rm M } 
( T , \textbf{h}_n , \lambda ) 
:=
L^{n, r}_{\W_p^{\rm ad}} ( T ) 
+
\Gamma^{n, r} ( \textbf{h}_n , T) 
-
\lambda ( \Vert T \Vert^p_{\mathbb{L}^p_{{\rm ad} } (\mu)} - 1 ) 
.
\end{split}
\end{equation}

\begin{AssumptionD1} \label{ass: existence approx sad-point Mart adapted}
There exists $ \TT := ( \TT_1, \TT_2) \in \big( \mathbb{L}^p_{{\rm ad}} ( \mu ) \big)^n $, $ \mathbf{h}_n := ( h_1, \cdots, h_n ) $ such that $h_i \in \mathcal{E}_{n - i +1}$ and $ \lambda := (\lambda_1, \cdots, \lambda_n) \in \R^n$, with $ \lambda_1 > 0$, satisfying 
\begin{enumerate}[label=\textnormal{(\roman*)}]
\item \label{cond:saddle point mart adapt} 
\begin{equation*}
\begin{split}
&\nabla_{T} 
\mathcal{L}^{n, r}_{\rm M} 
\big( S_n(r, \TT) , \textbf{h}_n, r S_n(r, \mathbf{\lambda}) \big)
=
\circ ( r^n ) 
\\
&\E^{\mu}_1 [ \TT_2 ]
=
\TT_1
\,\, 
\text{and} \,\, \Big\Vert S_n(r, \TT)  \Big\Vert_{\mathbb{L}^{p}_{\W_p^{\rm ad}} ( \mu )}^p
=
1
+
\circ ( r^{n-1} )
.\end{split}
\end{equation*}
\item \label{cond: concavity martingale} There exists $\kappa > 0 $ such that
$$
\sup_{ \substack{ T \in \mathbb{L}^{p}_{{\rm ad}} ( \mu ) \\ \Vert T \Vert_{\mathbb{L}^{p}_{{\rm ad}} ( \mu ) } }}
\Big\{
\mathbb{E}^{\mu}\big[ ( T\cdot \partial_{xx} \delta_m g) T + h_1' T_1 ( T_2 - T_1) \big]
+
\mathbb{E}^{\mu^{\otimes 2}} \big[ ( \partial_{ \hat{x} , x} \delta^2_m g ) ( T( X), T( \hat{X} ) \big] 
\Big\}
\leq 
-\kappa 
.$$
\end{enumerate}
\end{AssumptionD1}

\begin{Proposition} \label{prop:mart expansion}
Assume that $g$, $ \mu $, $n$ and $p$ satisfy Assumptions \hyperref[ass:reformulation 1 wasserstein]{${\rm A}_\mathbf{d}$}, \hyperref[ass:reformulation 2 gen dist]{${\rm B}_\mathbf{d}$} and Assumptions \hyperref[ass: existence approx sad-point Mart adapted]{${\rm D}_{\rm M}$}, then $G^{\rm M}_{\rm ad}$ admits the following expansion 
$$
G^{\rm M}_{\rm ad}( r ) 
=
L^{n, r}_{\W_p^{\rm ad}} \big( S_n(r, \TT) \big) 
+
\circ ( r^n ) 
$$
where $\TT$ is defined in Assumptions \hyperref[ass: existence approx sad-point Mart adapted]{${\rm D}_{\rm M}$}.
\end{Proposition}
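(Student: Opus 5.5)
We prove matching lower and upper bounds of the form $L^{n,r}_{\W_p^{\rm ad}}(S_n(r,\TT))+\circ(r^n)$. Both start from a martingale analogue of Proposition \ref{prop:approx 1}. Under Assumption \hyperref[ass:reformulation 1 adapted wasserstein]{${\rm A}_{\W^{\rm ad}_p}$}, the adapted ball can be parametrised by causal maps $X+rT$ with $T\in\mathbb{L}^p_{\rm ad}(\mu)$, $\Vert T\Vert\le 1$. The martingale constraint on $\mu^{r,T}$ then reads
\begin{equation*}
\E^{\mu^{r,T}}\big[h(Y_1)(Y_2-Y_1)\big]=0
\end{equation*}
for all test functions $h$, i.e. $\E^\mu\big[h(X_1+rT_1)(X_2-X_1+r(T_2-T_1))\big]=0$. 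Since $\mu$ itself is a martingale, we expand $h(X_1+rT_1)$ by Taylor's formula in $r$. The $r^0$ term vanishes, and the remaining terms produce exactly the pairings $\E^\mu[h^{(k)}(X_1)T_1^k(T_2-T_1)]$ that appear in $\Gamma^{n,r}$, plus terms of the form $\E^\mu[h^{(k)}(X_1)T_1^k(X_2-X_1)]=0$. The growth condition $h_i\in\mathcal{E}_{n-i+1}$, combined with Hölder's inequality, makes the Taylor remainders $\circ(r^n)$ uniformly over $\Vert T\Vert\le1$.

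\emph{Lower bound.} Take $T^r:=S_n(r,\TT)$. Because $\E^\mu_1[\TT_2]=\TT_1$, the measure $\mu^{r,T^r}$ is \emph{exactly} a martingale: $\E^\mu[X_2-X_1+r(T^r_2-T^r_1)\mid X_1]=0$, and $X_1+rT^r_1$ is $X_1$-measurable. Its norm is $1+\circ(r^{n-1})$, so we rescale it to norm at most $1$. This changes $T^r$ by $\circ(r^{n-1})$, hence changes $L^{n,r}$ by $r\cdot\circ(r^{n-1})=\circ(r^n)$. Since the rescaling is by a deterministic constant, the conditional-mean property is preserved. Proposition \ref{prop:approx 1} then gives $G^{\rm M}_{\rm ad}(r)\ge L^{n,r}_{\W_p^{\rm ad}}(S_n(r,\TT))+\circ(r^n)$.

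\emph{Upper bound (weak duality plus concavity).} For any admissible $T$ (so $\mu^{r,T}\in{\rm M}$ and $\Vert T\Vert\le1$), the martingale identity with $h=h_j$ gives $\Gamma^{n,r}(\mathbf{h}_n,T)=\circ(r^n)$. Moreover $-\lambda(\Vert T\Vert^p-1)\ge0$ for $\lambda=rS_n(r,\lambda)>0$ and small $r$, using $\lambda_1>0$. Hence
\begin{equation*}
g(\mu^{r,T})\le \mathcal{L}^{n,r}_{\rm M}\big(T,\mathbf{h}_n,rS_n(r,\lambda)\big)+\circ(r^n).
\end{equation*}
We then compare $\mathcal{L}^{n,r}_{\rm M}(T,\cdot)$ with its value at $T^r$. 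Write $T=T^r+D$ and expand in $D$. The linear term is $\langle\nabla_T\mathcal{L},D\rangle=\circ(r^n)$ by D(i) and $\Vert D\Vert\le2$. The quadratic term is of order $r^2$ with the form appearing in D(ii). In addition, the convexity of $\Vert\cdot\Vert^p$ contributes a negative multiple $-\lambda\cdot c\,\Vert D\Vert^2$ with $\lambda\sim r\lambda_1$. Since the $r^2$-order form is $\le -\kappa r^2\Vert D\Vert^2$, the remaining higher-order terms $r^3\Vert D\Vert^2$ are absorbed for small $r$. It follows that $\sup_T\mathcal{L}\le\mathcal{L}(T^r)+\circ(r^n)$. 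Finally, $\mathcal{L}(T^r)=L^{n,r}(T^r)+\circ(r^n)$, because $\Gamma(\mathbf{h}_n,T^r)=\circ(r^n)$ (as $T^r$ is martingale-admissible) and the norm penalty is $r\cdot\circ(r^{n-1})$.

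\emph{Main obstacle.} The delicate step is the quadratic-domination argument in the upper bound. For $p\neq2$, the map $T\mapsto\Vert T\Vert^p$ is not quadratic, so one needs a uniform strong-concavity inequality on the unit ball of $\mathbb{L}^p$. We will obtain it from a second-order Taylor expansion of $L^{n,r}$, controlled by Assumption \hyperref[ass:reformulation 2 gen dist]{${\rm B}_{\mathbf{d}}$} and Hölder's inequality with the exponents $p_i$, together with D(ii) applied to $D/\Vert D\Vert$. The term $h_1'T_1(T_2-T_1)$ inside D(ii) is what accounts for the $r^2$-part of $\Gamma$.
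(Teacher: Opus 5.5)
\textbf{There is a genuine gap.} Both of your bounds rest on the ``martingale analogue of Proposition \ref{prop:approx 1}'' that you state without proof. This is the claim that $G^{\rm M}_{\rm ad}(r)$ equals the supremum of $g(\mu^{r,T})$ over $\Vert T\Vert_{\mathbb{L}^p_{\rm ad}(\mu)}\le 1$ with $\mu^{r,T}\in{\rm M}$. That reduction is not available. Proposition \ref{prop:approx 1} is proved by approximating causal couplings by bi-causal ones and by causal Monge couplings, using continuity of $g$, and none of these approximations keeps the second marginal a martingale. So your upper bound only controls $\sup\{g(\mu^{r,T}):\Vert T\Vert\le1,\ \mu^{r,T}\in{\rm M}\}$, which could a priori be smaller than $G^{\rm M}_{\rm ad}(r)$.

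The paper avoids this by relaxing the constraint \emph{before} passing to maps. For every $\mu'\in B^{\rm M}_{\W^{\rm ad}_p}(\mu,r)$ one has $g(\mu')=g(\mu')+\E^{\mu'}[\sum_j r^{j-1}h_j^{\otimes}]$. Hence $G^{\rm M}_{\rm ad}(r)\le\sup_{\mu'\in B_{\W^{\rm ad}_p}(\mu,r)}\{g(\mu')+\E^{\mu'}[\sum_j r^{j-1}h_j^{\otimes}]\}$, an unconstrained problem on the adapted ball to which Proposition \ref{prop:approx 1} does apply. Every $T$ in the unit ball then enters, and $\Gamma^{n,r}(\mathbf{h}_n,T)$ is no longer negligible; this is why the concavity in Assumption ${\rm D}_{\rm M}$\,(ii) involves $h_1'$. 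Your Taylor computation of $\E^{\mu^{r,T}}[h^{\otimes}]$ is valid for all $T$, so this repair costs you little, and the rest of your concavity argument goes through.

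The lower bound has a second, independent gap. You correctly note that $\mu^{r,\hat T^r}$ (with $\hat T^r$ the rescaled map) is a martingale. But you also need $\W^{\rm ad}_p(\mu,\mu^{r,\hat T^r})\le r$, that is, a \emph{bi-causal} coupling of cost at most $r^p$. The Monge coupling $\mu\circ(X,X+r\hat T^r)^{-1}$ is causal. Anti-causality, however, requires $X_1$ and $X_2'$ to be conditionally independent given $X_1'=X_1+r\hat T^r_1(X_1)$. This holds if $x_1\mapsto x_1+r\hat T^r_1(x_1)$ is injective and can fail otherwise. Here $T^1_1$ is built from $\E^\mu_1[\partial_{x_1}\delta_m g]-h_1$ and is in general only measurable, so injectivity is not guaranteed. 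Proposition \ref{prop:approx 1} cannot be invoked here either: it is a statement about suprema, and its density argument again destroys the martingale property.

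The paper's remedy is to replace $T^k_1$ by smooth compactly supported $T^{k,\varepsilon}_1$, and $T^k_2$ by $T^k_2-T^k_1+T^{k,\varepsilon}_1$. This preserves $\E^\mu_1[T^{k,\varepsilon}_2]=T^{k,\varepsilon}_1$, so the image is still a martingale. It also makes the first-coordinate map injective for small $r$, so the coupling is bi-causal; one then lets $\varepsilon\to0$.

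Finally, what you call the main obstacle is not one. You need no strong concavity from $-\lambda\Vert T\Vert^p$, and a bound of the form $-\lambda c\Vert D\Vert^2$ is false in general when $p\neq2$. What suffices is concavity of $L^{n,r}_{\W^{\rm ad}_p}+\Gamma^{n,r}$ on the unit ball (from ${\rm D}_{\rm M}$\,(ii)), convexity of $\Vert\cdot\Vert^p$, and the first-order condition in ${\rm D}_{\rm M}$\,(i). Together these already give $\sup_T\mathcal{L}^{n,r}_{\rm M}\le\mathcal{L}^{n,r}_{\rm M}(T^r)+\circ(r^n)$.
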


We now move on to a sufficient condition in order to have the order $2$ expansion of the martingale distributionally robust optimisation under adapted Wasserstein constraints.

\begin{Proposition}\label{prop:mart expansion order 2}
Let $ g(\mu) = \int_{\R^2} g(x) \mu( \mathrm{d}x ) $ and $\mu$, $p$ satisfy Assumptions \hyperref[ass:reformulation 1 adapted wasserstein]{${\rm A}_{ \W^{\rm ad}_p } $}, \hyperref[ass:reformulation 2 gen dist]{${\rm B}_{ \W^{\rm ad}_p }$} for $n =2$. Let $J := (1, -1)$. There exists a unique solution $h_1$ to
$$
\inf_{ h \in \mathbb{L}^{p'} ( \mu )} \Vert \partial_x^{\rm ad} \delta_m g + h J \Vert_{ \mathbb{L}^{p'} ( \mu )}
.$$
Assume that $h_1$ admits a $C^2$ representation such that $ \mu' \mapsto g( \mu') + \int h_1(x_1) (x_2-x_1) \mu' ( \mathrm{d} x ) $ satisfies \hyperref[ass:reformulation 2 gen dist]{${\rm B}_{ \W^{\rm ad}_p }$} and \hyperref[ass:exist approx saddle point adapted wasserstein]{${\rm C}_{ \W^{\rm ad}_p }$}. Finally, define $h_2$ by:
\begin{equation*}
\begin{split}
& T^1 = \frac{ w_{p'} \big( \partial_x^{\rm ad} g + Jh_1 \big) }{\Vert \partial_x^{\rm ad} g + Jh_1 \Vert_{\mathbb{L}^p_{\rm ad} (\mu) }}
\\
&h_2 
=
\frac{
 \sum_{i = 1}^{2} 
 \E^{\mu}_1 [ 
 \frac{(\partial_{x_i x_1} g) T^1_i }{ \vert T^1_1 \vert^{p-2} }
-
 \frac{ (\partial_{ x_i x_2}g) T^1_i}{ \vert T^1_2 \vert^{p-2} }
 ]
-h'_1 \E^{\mu}_1 [ \frac{ 1 }{ \vert T^1_{2} \vert^{p-2} } - \frac{ 1 }{ \vert T^1_1 \vert^{p-2} } ]
}
{
1 + \vert T^1_1 \vert^{p-2} \E^{\mu}_1[ \frac{1}{ \vert T^1_{2} \vert^{p-2} } ]
 }
.\end{split}
\end{equation*}
If $h_2$ admits a $C^1$ representation, then 
$$
G^{\rm ad}_{\rm M} ( r ) = g( \mu ) + r\Vert \partial_x^{\rm ad} \delta_m g - h_1 J\Vert_{ \mathbb{L}^{p'} ( \mu ) } + \frac{r^2}{2} \E^{\mu} \big[ T^1\cdot (\partial^2_{x} \delta_m g ) T^1 \big] + \circ(r^2) 
.$$
\end{Proposition}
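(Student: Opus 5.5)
The plan is to reduce Proposition \ref{prop:mart expansion order 2} to Proposition \ref{prop:mart expansion} with $n=2$. That means exhibiting $\TT=(T^1,T^2)$, $\mathbf h_2=(h_1,h_2)$ and $\lambda=(\lambda_1,\lambda_2)$ with $\lambda_1>0$ that satisfy Assumption ${\rm D}_{\rm M}$, and then evaluating $L^{2,r}_{\W_p^{\rm ad}}(T^1+rT^2)$ up to $\circ(r^2)$.

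\textbf{Step 1: existence and uniqueness of $h_1$.} The map $h\mapsto \Vert \partial_x^{\rm ad}\delta_m g + hJ\Vert_{\L^{p'}(\mu)}$ is convex and coercive on $\L^{p'}(\mu_1)$, so a minimiser exists by reflexivity. It is unique by strict convexity of the $\L^{p'}$ norm, since $1<p'<\infty$ and $h\mapsto hJ$ is injective.

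The first-order condition reads
$$\E^\mu_1\big[w_{p'}(\partial^{\rm ad}_{x}\delta_m g+h_1J)\cdot J\big]=0.$$
Because $T^1$ is proportional to $w_{p'}(\partial^{\rm ad}_x g+Jh_1)$, this says $T^1_1=\E^\mu_1[T^1_2]$, which is precisely the martingale-type constraint $\E^\mu_1[\TT_2]=\TT_1$ at order one. Setting $\lambda_1=\Vert \partial_x^{\rm ad}\delta_m g+h_1J\Vert_{\L^{p'}}/p$, the gradient of $\mathcal L^{2,r}_{\rm M}$ at order $r$ vanishes, since $\nabla_T\Gamma^{2,r}$ at order $r$ equals $rJh_1$. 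The normalisation also gives $\Vert T^1\Vert^p=1$.

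\textbf{Step 2: the second-order corrector.} Next I expand $\nabla_T\mathcal L^{2,r}_{\rm M}(T^1+rT^2,\mathbf h_2,r\lambda_1+r^2\lambda_2)$ to order $r^2$. The $r^2$ coefficient is linear in $(T^2,\lambda_2,h_2)$. It consists of:
\begin{itemize}
\item the Hessian terms $(\partial^2_x g)T^1$, projected causally in the first component;
\item the terms $h_1'T^1_1(T^1_2-T^1_1)$ coming from the $k=1$ part of $\Gamma$;
\item the term $Jh_2$;
\item the linearisation $-p\lambda_1(p-1)|T^1|^{p-2}T^2$ of $p\lambda_1 w_p$;
\item the term $-p\lambda_2 w_p(T^1)$.
\end{itemize}
I then solve this equation componentwise for $T^2_1$ and $T^2_2$, dividing by $|T^1_i|^{p-2}$, and impose the constraint $\E^\mu_1[T^2_2]=T^2_1$. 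This gives a scalar linear equation in $h_2(X_1)$. Its solution is the displayed formula, with denominator $1+|T^1_1|^{p-2}\E_1^\mu[|T^1_2|^{-(p-2)}]$. The constant $\lambda_2$ is then fixed by the norm condition at order $r$, namely $\E^\mu[w_p(T^1)\cdot T^2]=0$. The concavity condition (ii) of ${\rm D}_{\rm M}$ is inherited from the assumed ${\rm C}_{\W_p^{\rm ad}}$ for the Lagrangian-augmented functional $g+\int h_1(x_1)(x_2-x_1)\,d\mu'$.

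\textbf{Step 3: evaluating the value.} Applying Proposition \ref{prop:mart expansion}, I expand
$$L^{2,r}(T^1+rT^2)=g(\mu)+r\,\E^\mu[\partial^{\rm ad}_x\delta_mg\cdot T^1]+r^2\,\E^\mu[\partial^{\rm ad}_x\delta_m g\cdot T^2]+\tfrac{r^2}{2}\,\E^\mu[T^1\cdot(\partial^2_x\delta_mg)T^1]+\circ(r^2).$$
For a linear $g$, the $\delta^2_m$ term vanishes. Since $T^i$ satisfies the causal constraint, $\E^\mu[h_1J\cdot T^i]=0$, so $\partial^{\rm ad}_x\delta_m g$ may be replaced by $\partial^{\rm ad}_x\delta_m g+h_1J$ in both linear terms. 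The first-order term then equals the dual norm. The $r^2\,T^2$ term equals $p\lambda_1\E[w_p(T^1)\cdot T^2]=0$ by the norm condition, which is the simplification noted in the Remark after Proposition \ref{prop:order 2 expansion gen dist}. This yields the claimed formula; the sign convention $\pm h_1J$ is absorbed into the definition of $h_1$.

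\textbf{Main obstacle.} I expect the hardest part to be Step 2. Making sense of the division by $|T^1_i|^{p-2}$ requires nondegeneracy of $T^1$, which comes from the lower bound on $|\partial^{\rm ad}_x\delta_m g|$ in ${\rm C}_{\W^{\rm ad}_p}$. One must also check that $T^2\in\L^p_{\rm ad}$ and that $h_2\in\mathcal E_1$; the latter is exactly where the assumed $C^1$ representation of $h_2$ and the growth bounds of ${\rm B}$ enter.
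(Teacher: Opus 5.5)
Your proposal is correct and is essentially the paper's argument. You verify Assumption ${\rm D}_{\rm M}$ for $n=2$ by solving the order-$r$ and order-$r^2$ first-order conditions, with $h_2$ fixed by $\E^\mu_1[T^2_2]=T^2_1$ and $\lambda_2$ by $\E^\mu[w_p(T^1)\cdot T^2]=0$. You then apply Proposition \ref{prop:mart expansion} and drop the $r^2$ term in $T^2$ exactly as the paper does. Your existence/uniqueness argument for $h_1$ and the dual-norm evaluation of the first-order term fill in steps the paper leaves implicit.

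One small sign correction. With $\Gamma^{2,r}$ as defined, its order-$r$ gradient is $-r\,h_1J$, not $+r\,h_1J$. So when $h_1$ minimises $\Vert \partial^{\rm ad}_x\delta_m g+hJ\Vert$, the multiplier you actually insert into $\Gamma^{2,r}$ must be $-h_1$. This is the sign you say is "absorbed", and it is where the $-h_1J$ in the final display comes from.
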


\begin{Remark}
{\rm 
The assumption on the regularity of $h_1$ and $h_2$ could be relaxed to $C^1$ and $C^0$ respectively, with additional effort. However, due to the technical aspect of this paper, we did not want to include it for the sake of clarity. From a practical point of view, the regularity of $h_1$ can be verified as $h_1$ is characterised by the first-order condition 
$$
 w_{p'} \big(\E_1^\mu[ \partial_{x_1} f ] - h_1\big)= 
 \E_1^\mu \big[ w_{p'} (\partial_{x_2} f + h_1 ) \big]
.$$
which can be rewritten as an equation of the form $ F(x_1, h_1(x_1)) =0$ for some $F: \R \times \R \rightarrow \R$. Using the implicit function theorem, one can deduce regularity properties of $h_1$.
}
\end{Remark}

\section{Numerical illustration}\label{sec:Numerical illustration}

We will compare the different approximations to see which one is the best compromise between simplicity of computation and precision. For the sake of the example, we assume that $ g $ has the following form: $ g ( \mu ) = \int F \big( y, \mu(f) \big) \mu(\mathrm{d}y) $, where $ F : (x, p) \in \R^2 \times \R \mapsto F(x, p) \in \R$ and $ f : \R^2 \rightarrow \R$. In this case, we have 
\begin{equation*}
\begin{split}
\delta_m g ( \mu, x) &= F \big( x, \mu( f) \big) + f(x) \int \partial_p F \big(y, \mu(f) \big) \mu (\mathrm{d}y) 
\\
\delta^2_m g ( \mu, x, \hat{x}) &= f(\hat{x}) \partial_p F \big( x, \mu( f ) \big) + f(x) \partial_p F \big( \hat{x}, \mu( f ) \big) + f(x) f(\hat{x} )\int \partial^2_{pp} F \big(y, \mu(f) \mu (\mathrm{d}y) 
\end{split}
\end{equation*}

Following Proposition \ref{prop:approx 1}, we want to compare the following quantities,
\begin{equation*}
\begin{split}
&\sup_{ \substack{ T \in \mathbb{L}^p ( \mu ) \\ \Vert T \Vert_{\mathbb{L}^p ( \mu )} \leq 1 } } g( \mu^{ r, T} )
\,\, , \,\,
G^{\rm quad}(r) := \sup_{ \substack{ T \in \mathbb{L}^p ( \mu ) \\ \Vert T \Vert_{\mathbb{L}^p ( \mu )} \leq 1 } } L^{2, r} (T )
\,\, \text{and} \,\, L^{2, r} (T_1 + r T_2 )
\end{split}
\end{equation*}
and to compare the precision/time of computation. Following the approach of \citeauthor{nendel_parametric_2022} \cite{nendel_parametric_2022}, we will solve all optimisation problems using Neural Networks, and we will use the following expression of $ L^{2, r}$.
\begin{align*}
L^{2, r} ( \Theta_1, \Theta_2 )
&= \alpha
 +
 r \sum_{i = 1}^2 \E^{\mu} \Big[
 \partial_{x_i} F \big( X, \mu(f) \big) \Theta_i + \beta \big(\partial_{x_i} f \big) \Theta_i \Big]
 \\&
 + \frac{r^2}{2}
 \sum_{i,j = 1}^2
 \E^{\mu} \Big[ \partial_{x_i x_j} F \big( X, \mu(f) \big) \Theta_i \Theta_j
 + \beta \big(\partial_{x_i x_j} f \big) \Theta_i \Theta_j \Big]
 \\
 &+ \frac{r^2}{2}
 \sum_{i,j = 1}^2 \gamma \E^{\mu} [ \partial_{x_i} f \Theta_i ] \E^{\mu} [ \partial_{x_j} f \Theta_j ]
 + 2 \E^{\mu} [ \partial_{x_i} f \Theta_i ] \E^{\mu} [ \partial_{x_j y} F \Theta_j ]
\\
\alpha &:= \mu \Big( F \big( \cdot, \mu(f) \big) \Big)
\,\,\,
\beta := \mu \Big( \partial_y F \big( \cdot, \mu(f) \big) \Big)
\,\,\,\,\,
\gamma := \mu \Big( \partial^2_{yy} F \big( \cdot, \mu(f) \big) \Big)
\end{align*}
According to Proposition \ref{prop:order 2 expansion gen dist}, in the Wasserstein case,
$$
\left\{
\begin{array}{ll}
 T^1_i(X) := \frac{ w_{p'}( \partial_{x_i}F + \beta \partial_{x_i}f ) }{(\lambda_1 p)^{p'-1} } \,\,\, i = 1,2 \\
 \lambda_1 p := \big( \sum_{i=1}^2 \Vert \partial_{x_i}F + \beta \partial_{x_i}f \Vert_{\mathbb{L}^{p'}(\mu)}^{p'} \big)^{1/p'} \\
 T^2_i(X) := \frac{1}{ p(p-1) \vert T^1_i(X) \vert^{p-2} \lambda_1 }
 \Big(
 ( \partial_{x_i} \nabla_x) F \cdot T^1
 +
 \beta ( \partial_{x_i} \nabla_x)f \cdot T_1
 + \gamma \mu( \nabla_x f \cdot T^1 ) \partial_{x_i} f
 \\
 \hspace{4.5 cm }
 + \mu( \nabla_x f \cdot T^1 ) \partial_{x_i y} F
 + \mu( \nabla_{x} \partial_y F \cdot T^1 ) \partial_{x_i} f
 \\
\hspace{4.5 cm }
  -
 \lambda_2 p w_p( T^1_i )
 \Big)
 \,\,\, i = 1,2 \\
 \lambda_2 p := \mu( T^1 \cdot D^{2}_xF \, T^1 ) + \beta \mu( T^1 \cdot D^{2}_xf \, T^1 )  + \gamma \mu(\nabla_x f \cdot T^1)^2 \\
 \hspace{1.1 cm }
 + \mu(\nabla_x f \cdot T^1) \mu(\nabla_x \partial_y F \cdot T^1)
\end{array}
\right.
.$$

We have the following graphs.

\begin{figure}[H]
\begin{centering}
\includegraphics[scale = 0.45]{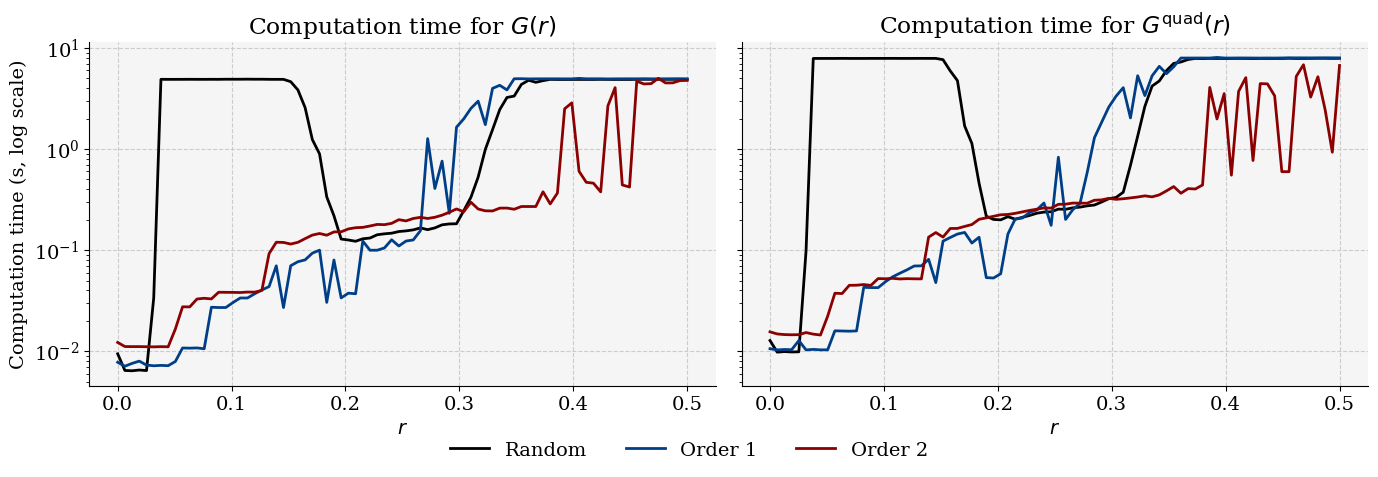}
\caption{\it \footnotesize Time computations.}
\label{fig:comp_time}
\end{centering}
\end{figure}

We observe that the computation times are significantly higher for the random initialisation, even though we averaged the results over $25$ different random seeds. We now turn to the question of whether the second-order approximation is sufficiently accurate.

\begin{figure}[H]
\begin{centering}
\includegraphics[scale = 0.45]{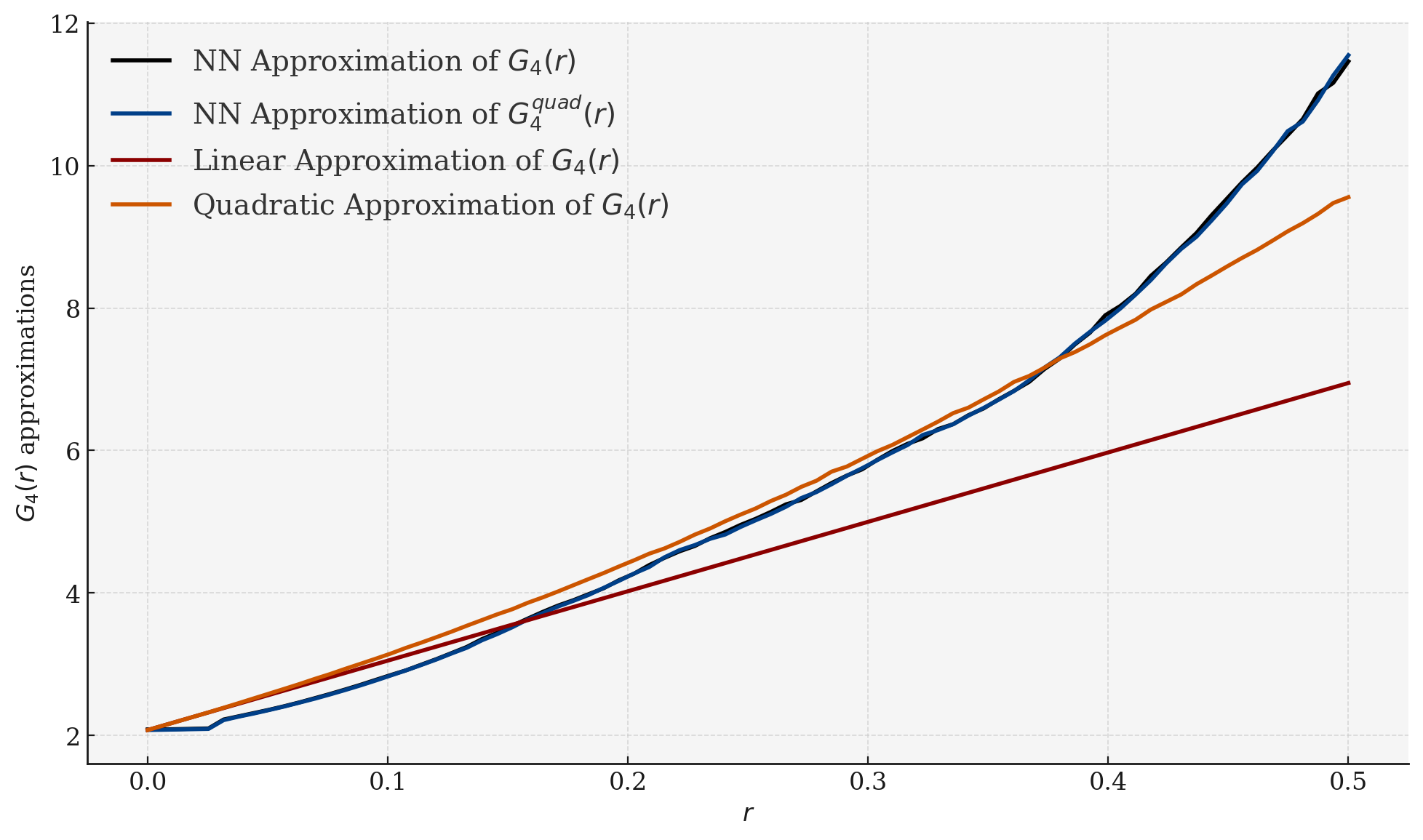}
\caption{\it \footnotesize Quality of approximations.}
\label{fig:quality_approx}
\end{centering}
\end{figure}

\noindent As shown in Figure~\ref{fig:quality_approx}, it is clear that the quadratic approximation greatly enhances the efficiency of solving the DRO problem with neural networks. The linear approximation of $G^{\W_p}(r)$ deviates significantly from the true value once $r > 0.25$. In contrast, $L^{2, r}(T_1 + r T_2)$ already provides a good approximation, and $G^{\rm quad}(r)$ is even more accurate. However, as shown in Figure~\ref{fig:comp_time}, this improved accuracy does not translate into a more favourable computational time for $G^{\rm quad}(r)$.

\section{Proofs}\label{sec:Proofs}
\subsection{Proof of Proposition \ref{prop:approx 1} }

Let $ T : S \rightarrow S$, $ n \geq 1 $ be an integer and $ I \subset \llbracket 1, n \rrbracket := \{ 1 , \cdots, n\}$. Define $ T^{\otimes I } : S^{n} \rightarrow S^{n}$, as $ \big( T^{\otimes I }( x^1, \cdots, x^n) \big)_i:= T(x^i) $ if $ i \in I$ and $ x^i $ otherwise, for $1 \leq i \leq n$. Also, for $ \lambda \in \R^d $ and $ x \in S^d $, we set $ \lambda x := ( \lambda_i x_i )_{1 \leq i \leq d} \in S^d$. Finally, for $T \in \mathbb{L}^p (\mu)$, we define $\mu^{r, T} := \mu \circ ( X + r T)^{-1}$.

\begin{Lemma}\label{lemma: cont thm Lp}

Let $F : \Pc_p ( \R ) \times \R^\ell \times \R^\ell \rightarrow \R $ be a continuous function, with $p/q-$polynomial growth in the space variable $x$, locally uniformly in $ \mu $, for some $q > 1$. Then 
$$ (m, T ) \in \Pc^p ( \R ) \times \mathbb{L}^p ( \mu ) \mapsto F \big( m, T^{\otimes \ell} , X \big) \in \mathbb{L}^q ( \mu^{\otimes \ell} ) 
\text{
 is continuous} 
.$$
\end{Lemma}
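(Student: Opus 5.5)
We read $T^{\otimes \ell}$ as the map $x=(x^1,\dots,x^\ell)\mapsto (T(x^1),\dots,T(x^\ell))$, and take $X$ to be the canonical variable on $\R^\ell$, so that $F(m,T^{\otimes\ell},X)(x)=F\big(m,T(x^1),\dots,T(x^\ell),x\big)$. The plan is a sequential argument combining convergence in measure with uniform integrability, i.e.\ a Vitali-type argument. Fix $(m,T)$ and a sequence $(m_k,T_k)\to(m,T)$ in $\Pc_p(\R)\times\mathbb{L}^p(\mu)$. We must show
\[
\E^{\mu^{\otimes\ell}}\big[|F(m_k,T_k^{\otimes\ell},X)-F(m,T^{\otimes\ell},X)|^q\big]\to 0 .
\]
It suffices to show that every subsequence has a further subsequence along which this holds.

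\textbf{Pointwise convergence.} Since $T_k\to T$ in $\mathbb{L}^p(\mu)$, we can extract a subsequence with $T_k\to T$ $\mu$-a.s. Then $T_k^{\otimes\ell}\to T^{\otimes\ell}$ $\mu^{\otimes\ell}$-a.s., because the exceptional set is contained in a finite union of sets $\{x: x^j\in N\}$ with $\mu(N)=0$. Joint continuity of $F$ in $(m,y,x)$ then gives $F(m_k,T_k^{\otimes\ell}(x),x)\to F(m,T^{\otimes\ell}(x),x)$ for $\mu^{\otimes\ell}$-a.e.\ $x$.

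\textbf{Domination.} The set $\{m_k\}\cup\{m\}$ is $\W_p$-relatively compact, so the local uniformity of the growth assumption yields one constant $C$ with
\[
|F(m_k,y,x)|\le C\big(1+|y|^{p/q}+|x|^{p/q}\big)
\]
for all $k$. Raising to the power $q$,
\[
|F(m_k,T_k^{\otimes\ell},X)|^q\le C'\Big(1+\sum_{j}|T_k(X^j)|^p+\sum_j|X^j|^p\Big).
\]
The terms $|X^j|^p$ are integrable because $\mu\in\Pc_p$. The family $\{|T_k(X^j)|^p\}_k$ is uniformly integrable under $\mu^{\otimes\ell}$: each term depends on the single coordinate $x^j$ with law $\mu$, and $|T_k|^p$ converges in $\mathbb{L}^1(\mu)$ since $T_k\to T$ in $\mathbb{L}^p$. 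Hence $|F(m_k,T_k^{\otimes\ell},X)-F(m,T^{\otimes\ell},X)|^q$ is uniformly integrable, and Vitali's theorem gives convergence to $0$ in $\mathbb{L}^1(\mu^{\otimes\ell})$. Alternatively, one can pass to a further subsequence with $\sum_k\|T_k-T\|_p<\infty$, so that $\sup_k|T_k|\in\mathbb{L}^p$, and apply dominated convergence directly.

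\textbf{Main obstacle.} The delicate point is the uniformity in $m$. "Locally uniformly in $\mu$" must be read as giving a single growth constant on a $\W_p$-neighbourhood of $m$, or on compact sets, which a convergent sequence eventually lies in. The joint continuity of $F$ in $m$ is only used pointwise. Measurability of the composition is immediate from the Borel measurability of $T$ and the continuity of $F$.
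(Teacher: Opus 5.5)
Your proposal is correct and follows the same route as the paper's proof. Both arguments get convergence (in probability in the paper, $\mu^{\otimes\ell}$-a.s.\ along a subsequence in yours) from the joint continuity of $F$, obtain a single growth constant along the convergent sequence $m_k$, and bound $|F(m_k,T_k^{\otimes\ell},X)|^q$ by $C\big(1+|X|^p+|T_k^{\otimes\ell}|^p\big)$ to get uniform integrability and conclude by Vitali. Your version makes explicit what the paper leaves implicit: the subsequence extraction, the a.s.\ convergence on the product space, and why $\{|T_k(X^j)|^p\}_k$ is uniformly integrable.
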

\noindent{\textbf{Proof of Lemma \ref{lemma: cont thm Lp} }}. Let $T_n \rightarrow T$ in $\mathbb{L}^p ( \mu )$, and $ m_n \rightarrow \mu $ in $ \Pc^p ( \R )$. Since $ F $ is continuous, we have
$
F \big( m_n , T^{\otimes \ell}_n , X \big) 
\xrightarrow[ n \rightarrow \infty]{\P}
F \big( m, T^{\otimes \ell}, X \big) 
.$
 Also, by the growth assumption on $F$, since $ m_n $ converges toward $m$, there exists $ C>0$ such that for sufficiently large $n$, $
 \vert F \big( m_n , T^{\otimes \ell}_n , X \big) 
 \vert^q
 \leq 
 \hat C \big( 1 + \vert X \vert^p + \vert T^{\otimes \ell}_n \vert^p \big) 
.$
 Hence, $ \vert F \big( m_n , T^{\otimes \ell}_n ( X ) , X \big) \vert^q$ is dominated by a uniformly integrable family of random variables and is therefore uniformly integrable.
\ep

\noindent{}The following is a straightforward corollary.
\begin{Corollary}\label{corr:cont map thm measure}
Take $F : \Pc_p ( \R ) \times \R^\ell \rightarrow \R $ to be a continuous function with $p/q-$polynomial growth in the space variable, locally uniformly in $ \mu $. Then for all $\mu_r \rightarrow \mu $ in $ \Pc_p ( \R ) $
$$
\sup_{ \substack{ T \in \mathbb{L}^p ( \mu ) \\ \Vert T \Vert_{\mathbb{L}^p (\mu )} \leq 1 }} 
\Big\Vert 
\Delta_{ r T^{\otimes \ell} } \big[F ( \mu_{r} , \cdot ) \big] 
 \Big\Vert_{ \mathbb{L}^{q} ( \mu^{\otimes \ell} ) }
\xrightarrow[r \rightarrow 0]{} 0 
.$$
\end{Corollary}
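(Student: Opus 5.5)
The statement to prove is Corollary \ref{corr:cont map thm measure}. I will argue by contradiction and reduce everything to the sequential continuity of Lemma \ref{lemma: cont thm Lp}.

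Suppose the convergence fails. Then there exist $\varepsilon>0$, a sequence $r_k\to 0$, and maps $T_k$ with $\Vert T_k\Vert_{\mathbb{L}^p(\mu)}\le 1$ such that
$$\big\Vert \Delta_{r_kT_k^{\otimes \ell}}[F(\mu_{r_k},\cdot)]\big\Vert_{\mathbb{L}^q(\mu^{\otimes\ell})}\ge\varepsilon .$$
The key observation is that the perturbations themselves converge: $S_k:=r_kT_k$ satisfies $\Vert S_k\Vert_{\mathbb{L}^p(\mu)}\le r_k\to0$, so $S_k\to 0$ in $\mathbb{L}^p(\mu)$. Also $\mu_{r_k}\to\mu$ in $\Pc_p$. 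I therefore rewrite
$$\Delta_{S_k^{\otimes\ell}}[F(\mu_{r_k},\cdot)](x)=F\big(\mu_{r_k},x+S_k^{\otimes\ell}(x)\big)-F(\mu_{r_k},x),$$
interpreting the shifted argument coordinatewise, so $(x+S_k^{\otimes\ell}(x))_j=x^j+S_k(x^j)$.

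This matches the setting of Lemma \ref{lemma: cont thm Lp}. Define $\tilde F(m,t,x):=F(m,x+t)$ for $t,x\in\R^\ell$. It is continuous, and it has $p/q$-polynomial growth in $(t,x)$ locally uniformly in $m$, because $|x+t|^{p/q}\lesssim |x|^{p/q}+|t|^{p/q}$. The lemma, as used in its proof, bounds $\vert \tilde F\vert^q$ by $1+|X|^p+|T^{\otimes\ell}|^p$, so it handles growth in both arguments. Applying it to $(m,T)=(\mu_{r_k},S_k)\to(\mu,0)$ gives
$$\tilde F(\mu_{r_k},S_k^{\otimes\ell},X)\to\tilde F(\mu,0,X)=F(\mu,X)\quad\text{in }\mathbb{L}^q(\mu^{\otimes\ell}).$$
Applying it again with $T=0$ fixed gives $F(\mu_{r_k},X)\to F(\mu,X)$ in $\mathbb{L}^q(\mu^{\otimes\ell})$. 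By the triangle inequality the difference tends to $0$, which contradicts $\varepsilon>0$.

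The only delicate point is checking that the lemma really covers an argument of the form $x+t$ rather than $t$ alone. The lemma's proof uses two facts. First, convergence in probability of $S_k(X^j)\to 0$ under $\mu^{\otimes\ell}$, which follows from $\mathbb{L}^p$ convergence of each coordinate. Second, a domination
$$|\tilde F|^q\le C\Big(1+\sum_j|X^j|^p+\sum_j|S_k(X^j)|^p\Big),$$
where the family $|S_k(X^j)|^p$ is uniformly integrable because it converges in $\mathbb{L}^1$. If needed I would redo that short uniform-integrability and Vitali argument directly for $\tilde F$ instead of invoking the lemma as a black box. That step is routine but is where care is required.
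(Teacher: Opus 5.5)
Your argument is correct and matches the one the paper has in mind when it calls this a straightforward corollary of Lemma~\ref{lemma: cont thm Lp}. You apply the lemma's continuity at the point $(\mu,0)$ to $\tilde F(m,t,x)=F(m,x+t)$, get uniformity over the unit ball from $\Vert rT\Vert_{\mathbb{L}^p(\mu)}\le r$, and your sequential contradiction argument does the job of an explicit $\varepsilon$-$\delta$ formulation; you also read correctly that the lemma's proof requires $p/q$-growth jointly in $(t,x)$, which $\tilde F$ has.
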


\begin{Lemma}\label{Lemma: expansion wrt graph}
Let $ F : \R^\ell \rightarrow \R$ be a $C^n$ function, such that for $ 0 \leq k \leq n$ , $ i \in \mathbb{N}^\ell$ with $ \vert i \vert_1 = k $, $\partial^{\vert i \vert_1}_{\mathbf{x}^i} F$ has $ p/p_{i} = \frac{p}{p - \vert i \vert_1} ' -$ polynomial growth. Let $ T \in \mathbb{L}^p ( \mu )$. Define
$$
f ( r )
:=
\int_{ \R^{\ell}} F ( x ) ( \mu^{r, T} - \mu )^{\otimes \ell} ( \mathrm{d}x ) 
\,\, \text{for $r \in \R$}
.$$
The function $f$ is $C^n $ in a neighbourhood of $0$, and denote by $f^{(i)}$ the $i-th$ derivative for $ 1 \leq i \leq n$. Then, $f$ satisfies:
\begin{enumerate}[label=\textnormal{(\roman*)}]
\item \label{eq:repres int graphe}
$
f ( r ) =
r^\ell
\int_{ (0, 1 )^\ell} 
\mathbb{E}^{\mu^{\otimes \ell }} 
			\big[
\partial^{\ell}_{ \mathbf{x}^{ \llbracket 1, \ell \rrbracket} } F ( X + r \lambda T^{\otimes \ell } )
\prod_{a=1}^\ell T \left( X^a \right)
 \big]
\mathrm{d}\lambda
$
\item \label{eq:diff at 0 int graph }
For $ 0\leq s \leq n $,
$
f^{(s)} ( 0 ) 
=
\left\{
 \begin{array}{ll}
 0 & \mbox{if } s \leq \ell-1 \\
 \sum_{ \substack{ i \in \mathbb{N}_*^\ell \\ \vert i \vert_1 = s}}
\frac{ s!
}{
i!
}
\mathbb{E}^{ \mu^{\otimes \ell}} \Big[
\big( \partial^{ s }_{ \mathbf{x}^i } F \big) \prod_{a=1}^{\ell} T ( X^a )^{i_a} \Big] & \mbox{otherwise.}
 \end{array}
\right.
.$
\item \label{eq:estim at 0 int graph}
$
\Big\vert 
f( r ) 
-
\sum_{s = \ell }^{n} 
\frac{f^{(s)} ( 0 ) }{s ! }
r^s
\Big\vert 
\leq 
r^n 
C_{n, \ell} 
\Vert T \Vert_{\mathbb{L}^p( \mu ) }^\ell
\sup_{ \substack{ I \subset \llbracket 1, \ell \rrbracket 
\\ i \in \mathbb{N}^{ \vert I \vert } \\ \vert i \vert_1 = n }}
\int_0^1
\Big\Vert 
\Delta_{ r t T^{ \otimes I}} \big[
\partial^{n}_{\mathbf{x}^i } F \big] 
\Big\Vert_{ \mathbb{L}^{p_i'}( \mu^{\otimes \ell} )}
\mathrm{d}t
.$
\end{enumerate}
\end{Lemma}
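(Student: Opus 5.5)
The plan is to start from the pushforward structure. By definition of $\mu^{r,T}=\mu\circ(X+rT)^{-1}$,
\begin{equation*}
\int_{\R^\ell} F(x)\,(\mu^{r,T}-\mu)^{\otimes\ell}(\mathrm{d}x)
=
\sum_{I\subset\llbracket 1,\ell\rrbracket}(-1)^{\ell-|I|}\,
\E^{\mu^{\otimes\ell}}\big[F(X+rT^{\otimes I}-X^{\otimes I}+\ldots)\big].
\end{equation*}
More cleanly, each factor $(\mu^{r,T}-\mu)$ acting in coordinate $a$ is the difference operator $\Delta_{rT(X^a)e_a}$ applied under $\mu$. So $f(r)=\E^{\mu^{\otimes\ell}}\big[\Delta_{rT(X^1)e_1}\cdots\Delta_{rT(X^\ell)e_\ell}F(X)\big]$, and the $\ell$ difference operators commute.

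\textbf{Step (i).} Write each difference via the fundamental theorem of calculus, $\Delta_{he_a}G(x)=\int_0^1 h\,\partial_{x_a}G(x+\lambda_a he_a)\,\mathrm{d}\lambda_a$, and iterate over $a=1,\dots,\ell$. This gives the mixed derivative $\partial^\ell_{\mathbf{x}^{\llbracket 1,\ell\rrbracket}}F$ evaluated at $X+r\lambda T^{\otimes\ell}$, multiplied by $r^\ell\prod_a T(X^a)$. Fubini is justified because $\partial^\ell_{\mathbf{x}^{\llbracket1,\ell\rrbracket}}F$ has $p/p_i'$-growth with $|i|_\infty=1$. Hölder with exponents $p_i'$ and $p_i$ then applies to the product of the $T(X^a)$, which lies in $\mathbb{L}^{p}$ in each independent coordinate.

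\textbf{Step (ii).} For regularity and the derivatives at $0$, differentiate under the integral in the representation of (i). Alternatively, differentiate $r\mapsto \E[F(X+rT^{\otimes I})]$ for each $I$ directly: the $s$-th derivative is $\E\big[\sum_{|i|_1=s,\,\mathrm{supp}\,i\subset I}\frac{s!}{i!}\partial^s_{\mathbf{x}^i}F(X+rT^{\otimes I})\prod_a T(X^a)^{i_a}\big]$. Domination holds locally uniformly in $r$ because $\partial^s_{\mathbf{x}^i}F$ has $p/p_i'$ growth, hence lies in $\mathbb{L}^{p_i'}$. Moreover $\prod_a T(X^a)^{i_a}\in\mathbb{L}^{p_i}(\mu^{\otimes\ell})$, since $i_a\le|i|_\infty$ and the coordinates are independent. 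Continuity in $r$ follows from Lemma \ref{lemma: cont thm Lp}. Summing with the signs $(-1)^{\ell-|I|}$ and applying inclusion–exclusion, only multi-indices with every $i_a\ge1$ survive. Hence $f^{(s)}(0)=0$ for $s<\ell$, and otherwise $f^{(s)}(0)$ is the stated sum over $i\in\mathbb{N}_*^\ell$.

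\textbf{Step (iii).} Apply Taylor's formula with integral remainder at order $n$:
\begin{equation*}
f(r)-\sum_{s\le n}\frac{f^{(s)}(0)}{s!}r^s=\int_0^1\frac{(1-t)^{n-1}}{(n-1)!}\big(f^{(n)}(rt)-f^{(n)}(0)\big)r^n\,\mathrm{d}t .
\end{equation*}
The terms with $s<\ell$ vanish by (ii). Expanding $f^{(n)}(rt)-f^{(n)}(0)$ with the formula from (ii) produces terms of the form $\E\big[\Delta_{rtT^{\otimes I}}[\partial^n_{\mathbf{x}^i}F]\prod_a T(X^a)^{i_a}\big]$. Hölder bounds each by $\Vert\Delta_{rtT^{\otimes I}}\partial^n_{\mathbf{x}^i}F\Vert_{\mathbb{L}^{p_i'}}\,\Vert\prod_a T(X^a)^{i_a}\Vert_{\mathbb{L}^{p_i}}$. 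The second factor is controlled by a power of $\Vert T\Vert_{\mathbb{L}^p}$; a normalization where $\Vert T\Vert\le1$ lets it be bounded by $\Vert T\Vert^\ell$. Absorbing the combinatorial constants into $C_{n,\ell}$ and taking the supremum over $I$ and $i$ gives (iii).

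The main obstacle is the exponent bookkeeping in the Hölder step. One must check that $\prod_a T(X^a)^{i_a}\in\mathbb{L}^{p_i}(\mu^{\otimes\ell})$ with $p_i=p/|i|_\infty$, which holds by independence. One must also check that the growth $p/p_i'$ of $\partial^n_{\mathbf{x}^i}F$ yields exactly $\mathbb{L}^{p_i'}$-integrability along the shifted arguments $X+rtT^{\otimes I}$, using $|T^{\otimes I}|^{p}$ integrability. The power of $\Vert T\Vert$ obtained is $\prod_a\Vert T\Vert^{i_a}$; in the applications $\Vert T\Vert\le 1$, so it can be replaced by $\Vert T\Vert^\ell$.
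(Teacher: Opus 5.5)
Your proof is correct, and its skeleton is the paper's:
\begin{itemize}
\item the inclusion--exclusion representation $f(r)=\sum_{I\subset\llbracket 1,\ell\rrbracket}(-1)^{\ell-|I|}\E^{\mu^{\otimes\ell}}\big[F(X+rT^{\otimes I})\big]$, with only the coordinates in $I$ shifted (your first display is garbled, but your difference-operator reformulation says exactly this);
\item the iterated fundamental theorem of calculus for (i);
\item differentiation under the integral, with the $\mathbb{L}^{p_i'}$--$\mathbb{L}^{p_i}$ H\"older pairing, for regularity;
\item Taylor's formula with integral remainder plus H\"older for (iii).
\end{itemize}
The one genuine difference is part (ii). The paper proves it by induction on $\ell$, peeling off the first coordinate through $U(x^1,r)=\int F(x^1,x)\,(\mu^{r,T}-\mu)^{\otimes(\ell-1)}(\mathrm{d}x)$ and a Leibniz-rule computation. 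You instead evaluate at $r=0$ the formula for $f^{(s)}(r)$ that you need anyway for regularity and for (iii). At $r=0$ the argument $X$ no longer depends on $I$, so you can collect coefficients using $\sum_{S\subset I\subset\llbracket 1,\ell\rrbracket}(-1)^{\ell-|I|}=(1-1)^{\ell-|S|}$, where $S=\mathrm{supp}\,i$. This is shorter and avoids the index bookkeeping of the induction; the inductive route offers no extra generality.

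Your remark on the power of $\|T\|$ is correct, and it is not a mere convenience. The H\"older step, yours and the paper's (which produces $\prod_{a\in I}\E^{\mu}\big[|T|^{i_a p_i}\big]^{1/p_i}$), gives
$$\prod_a\|T\|_{\mathbb{L}^{i_a p_i}(\mu)}^{i_a}\le\|T\|_{\mathbb{L}^p(\mu)}^{n},$$
because $i_a p_i\le p$. The stated factor $\|T\|^\ell$ cannot hold for all $T$ when $n>\ell$. Replacing $(T,r)$ by $(cT,r/c)$ leaves the left-hand side unchanged, since $f_{cT}(r)=f_T(cr)$. It multiplies the right-hand side by $c^{\ell-n}$, which tends to $0$ as $c\to\infty$. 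Concretely, with $\ell=1$, $n=2$, $\mu=\delta_0$, $T(0)=c$ and $F(x)=x^3$, the left side is $r^3|c|^3$ while the right side is a constant times $r^3c^2$. So (iii) should be stated with $\|T\|^{n}$, or under $\|T\|_{\mathbb{L}^p(\mu)}\le 1$. The latter is the only case used in the proof of Proposition~\ref{prop:approx 1}, so your restriction is exactly what is needed.
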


\noindent{}\textbf{Proof of Lemma \ref{Lemma: expansion wrt graph}.} The proof consists of $5$ steps.

\noindent{\bf{Step (i)}:} we first prove by induction that 
\begin{equation}\label{eq:gen formula deriv 0 int graphe}
f ( r )
\! = \!\! \!
\sum_{ J \subset \llbracket 1, \ell \rrbracket } 
\! \!
\E^{\mu^{\otimes \ell}}
\big[
( - 1 )^{ \ell - \vert J \vert_1 }
\!
F ( X \! + \! r T^{\otimes J} )
\big] 
\text{
where} 
\,
\big( x + r T^{\otimes J} ( x ) \big)_j \!= 
  x^j \!+ \! \mathds{1}_{J}(j) r T ( x^j )
.\end{equation}
\noindent{}\underline{Initialisation, $\ell = 1$.} By definition, for $ \ell = 1$, 
$
f ( r ) 
=
\int_{\R} \Delta_{ r T (x ) }[F] (x) \mu ( \mathrm{d}x ) 
$
which is exactly the desired formula for $ \ell = 1 $.
\noindent{}\underline{Assume the formula is proven for some $\ell - 1 \geq 0$.} The integration with the last variable yields, 
$$
\int_{ \R^{\ell}} F (x ) ( \mu^{r, T} - \mu )^{\otimes \ell } ( \mathrm{d}x ) 
=
\int_{\R}
\big( 
\int_{ \R^{\ell-1}} F ( x, x^\ell ) ( \mu^{r, T} - \mu )^{\otimes \ell-1} ( \mathrm{d}x ) 
\big)
( \mu^{r, T} - \mu ) ( \mathrm{d}x^\ell ) 
.
$$
So, by induction hypothesis,
\begin{align*}
\int_{ \R^{\ell}} F ( x ) ( \mu^{r, T} - \mu ) ( \mathrm{d}x ) =
&\int_{\R} 
\sum_{ I \subset \llbracket 1, \ell-1 \rrbracket } 
\!\! ( - 1 )^{ ( \ell - 1 ) - \vert I \vert }
\E^{\mu^{\otimes \ell - 1 }}
\big[
F ( X + r T^{\otimes I} , x^\ell )
\big]
( \mu^{r, T} - \mu ) ( \mathrm{d}x^\ell ) 
\\
&=
\int_\R \sum_{ I \subset \llbracket 1, \ell-1 \rrbracket } \!\!\!\!\!
\E^{\mu^{\otimes \ell - 1}} \Big[
( - 1 )^{ ( \ell - 1 ) - \vert I \vert }
F \big( X \!\!+\! r T^{\otimes I} , x^\ell \!+\! r T ( x^\ell ) \big)
\Big]
\mu ( \mathrm{d}x)
\\
&-
\int_\R
\sum_{ I \subset \llbracket 1, \ell-1 \rrbracket } 
\E^{\mu^{\otimes \ell - 1}} \big[
( - 1 )^{ ( \ell - 1 ) - \vert I \vert }
F ( X + r T^{\otimes I} , x^\ell )
\big]
\mu ( \mathrm{d}x^\ell )
\\
=
&\sum_{ I \subset \llbracket 1, \ell \rrbracket } 
\E^{\mu^{\otimes \ell}}
\Big[
( - 1 )^{ \ell - \vert I \vert }
F \big( X + r T^{\otimes I} \big)
\Big].
\end{align*}

\noindent{\bf{Step (ii)}:} we now prove that $f$ is $C^n$ in a neighbourhood of $0$. We just need to prove that each term of the sum \eqref{eq:gen formula deriv 0 int graphe} is $C^n$. For the sake of clarity, we only prove the case $I = \llbracket 1, \ell \rrbracket $, all other cases are dealt with similarly. If $x$ is fixed, the mapping $ r \mapsto v ( r, x ) := F \big( x + r T^{ \otimes I } ( x ) \big)$ is $C^n$ because of the assumption on $F$. Furthermore for $ 0 \leq s \leq n $ and $x \in \R^\ell$ we have 
$
\partial^{s}_{r^s } v ( r, x ) 
=
\sum_{ \substack{ J \in \mathbb{N}^\ell \\ 
\vert J \vert_1 = s}}
\frac{s!}{J!}
\partial^{s}_{x^J} F \big( x + r T^{\otimes I} ( x ) \big) \prod_{i=1}^\ell T ( x^i ) ^{J_i} 
.$
Hence by $ p / p_{J}' $ polynomial growth assumption on $F$, there exists $ C > 0$ such that 
$$
\vert 
\partial^{s}_{r^s } F ( r, x ) 
\vert 
\leq 
C 
\sum_{ \substack{  J \in \mathbb{N}^\ell \\ 
\vert J \vert_1 = s}}
\big( 1 + \vert x \vert^{p/{p_{J}'}} + r^{p/p_{J}'} \vert T^{ \otimes \ell} ( x ) \vert^{p/p_{J}'} \big) 
\prod_{i=1}^\ell \vert T ( x^i )^{J_i} \vert 
.$$
By Hölder's inequality, we see that the right hand side can be dominated uniformly in $r $ as soon as $r $ is in some compact subset. So, by the differentiation under the integral theorem and the assumptions on $F$, it is clear that $f$ is $C^n$.

\noindent{\bf{Step (iii)}:} we now prove that the equation \ref{eq:repres int graphe} is satisfied. First, notice that 
\begin{align*}
f ( r ) 
&=
r
\int_{ \R ^{\ell-1}}
\E^\mu \Big[
F \big( X+ r T , y \big) 
-
F ( X, y ) 
\Big]
( \mu^{r, T} - \mu )^{\otimes \ell - 1} ( \mathrm{d}y ) 
\\
&=
r
\int_{ \R ^{\ell-1}}
\int_{ (0, 1 )} 
\E^\mu
\big[
\partial_{x^1} F \big( X + \lambda_1 r T , y \big) T 
\big]
\mathrm{d}\lambda_1
( \mu^{r , T} - \mu )^{\otimes \ell - 1} ( \mathrm{d} y ) 
\end{align*}
Then \ref{eq:repres int graphe} follows by direct repetition of this argument.

\noindent{\bf{Step (iv):}} we now prove the point \ref{eq:diff at 0 int graph }. To do so, we proceed by induction on $\ell$.

\noindent{}\underline{Initialisation, for $\ell = 1$.} In this case, 
$
f ( r ) 
=
\E^\mu \big[ F ( X + r T ) - F ( X ) \big]
$
hence 
$$
f^{(s)} ( 0 ) 
=
\mathds{1}_{ \{s \geq 1 \} }
\mathbb{E}^{\mu} \big[
\big( \partial^{ \vert i_s \vert_1 }_{ \mathbf{x}^{i_s} } F \big) T(X)^s \big] \,\, \text{where $i_s = (s, 0 , \cdots, 0 ) \in \N^\ell$}
$$
which is exactly the equation \ref{eq:diff at 0 int graph } for $\ell = 1$. 

\noindent{}\underline{Induction, assume that \ref{eq:diff at 0 int graph } holds for some $\ell \geq 1$.} Set 
$$
U ( x^1, r ) 
:=
\int_{ \R^{\ell-1} } 
F ( x^1, x ) ( \mu^{r, T } - \mu ) ( \mathrm{d}x ) 
,$$
by definition of $f$, 
$
f ( r ) 
=
\E^\mu \big[ U ( X + r T , r ) \big]
-
\E^\mu \big[ U ( X , r ) \big]
.$
Of course, with the assumption on $F$, $x \mapsto U ( x, r ) $ satisfies the assumption of Lemma \ref{Lemma: expansion wrt graph} so, by induction hypothesis, for $ 0 \leq s \leq \ell-2$, 
$$
f^{ ( s ) } ( 0 ) 
=
\sum_{ k = 0}^s
\binom{s}{k}
\E^\mu \Big[
\partial^{s - k}_{ \mathbf{x}^{i_{ s - k } }} \partial^{k}_{ r^k } 
U 
( X, 0 ) 
\Big]
-
\E^\mu \Big[
\partial^s_{r^s} 
U 
( X, 0 ) 
 \Big] 
$$
 where $\partial^s_{r^s}$ is the partial derivative $\partial_r$ applied $s$ times. By the induction hypothesis \ref{eq:diff at 0 int graph }, $\partial^s_{r^s} 
U 
( x^1, 0 ) = 0$ for $ 0 \leq s \leq \ell-2$. So $f^{ ( s ) } ( 0 ) = 0$. 
Now, for $ s = \ell -1$, we get 
\begin{align*}
f^{ ( \ell -1 ) } ( 0 ) 
&=
\sum_{ k = 0}^{\ell-1}
\binom{\ell -1}{k}
\E^\mu
\big[
\partial^{ \ell-1 - k }_{ \mathbf{x}^{ i_{ \ell-1 - k } }} \partial^k_{r^k}  
U 
( X, 0 ) 
\big]
-
\E^\mu \big[
\partial^{\ell - 1}_{r^{\ell-1}} 
U 
( X, 0 ) 
\big]
\\
&=
\E^\mu \big[
\partial^{\ell-1}_{ r^{\ell-1}} 
U 
( X, 0 ) 
 \big]
-
\E^\mu \big[
\partial^{\ell-1}_{r^{\ell-1}} 
U 
( X, 0 ) 
 \big]
=
0
.
\end{align*}
Now, for $ \ell \leq s \leq n $, again using the induction hypothesis \ref{eq:diff at 0 int graph }
\begin{align*}
f^{ ( s ) } ( 0 ) 
&=
\sum_{ k = \ell-1}^{s}
\binom{s}{k}
\E^\mu \big[
\partial^{s-k}_{x^{s - k}} \partial^{k}_{r^k} 
U
( X, 0 )
 T ( X )^{s - k}
 \big] 
-
\E^\mu \big[
\partial^s_{r^{s}} 
U 
(X, 0 ) 
\big]
\\
&=
\sum_{ k = \ell-1}^{s-1}
\binom{s}{k}
\E^\mu \big[
\partial^{s-k}_{x^{s - k}} \partial^{k}_{r^k} 
U
( X, 0 ) 
 T( X )^{ s - k} 
 \big]
\\
&=
\sum_{ k = \ell-1}^{s-1}
\binom{s}{k}
\sum_{ \substack{ i \in \N_*^{\ell-1} \\ \vert i \vert_1 = k} }
\frac{k!}
{ i! }
\E^{\mu^{\otimes \ell}} \Big[ \partial^{s-k}_{ (x^1)^{s-k}} \partial^{k}_{ \mathbf{x}^i} F ( X^1, X ) T( X^1 )^{s - k } \prod_{j=2}^\ell T ( X^{j} ) ^{i_j }
 \Big].
\end{align*}
Now, on the last line, we see that setting $ i_1 = s-k $ we exactly get \ref{eq:diff at 0 int graph }. 

\noindent{\bf{Step (v):}} we now prove the estimate \ref{eq:estim at 0 int graph}. Combining equation \eqref{eq:gen formula deriv 0 int graphe}, and Leibniz formula, we get 
$$
f^{ (n)} ( r )
=
\sum_{ I \subset \llbracket 1, \ell \rrbracket } 
\sum_{ \substack{ i \in \mathbb{N}^{ \vert I \vert } \\ \vert i \vert_1 = n }}
\frac{ ( - 1 )^{ \ell - \vert I \vert } n ! 
}{i !}
\E^{\mu^{\otimes \ell}} \big[
\partial^{n}_{\mathbf{x}^i_I} F ( X + r T^{\otimes I} ) \prod_{ \substack{ i \in I \\ 1 \leq j \leq \vert I \vert }} T ( x^i )^{ i_j } 
 \big]
.$$
Hence, by Taylor's formula
$$
f ( r ) \!
-\!\!
\sum_{k = \ell }^{n} \!\!
\frac{f^{(k)} ( 0 ) }{k ! }
r^k \!
=\!
r^n  \!\!\!
\int_0^1 \!\!\!\!
n ( 1 \!-\! t )^{n-1} 
\!\!\!\!
\sum_{ I \subset \llbracket 1, \ell \rrbracket } \!
\sum_{ \substack{  i \in \mathbb{N}^{ \vert I \vert } \\ \vert  i \vert_1 = n }}
\!\!\!
\frac{ ( - 1 )^{ \ell - \vert I \vert }
}{ i !}
\E^{\mu^{\otimes \ell}} \Big[ 
\Delta_{r t T^{\otimes I} } \big[
\partial^{n}_{\mathbf{x}^i_I} F \big]
\! \!\!\!\!\!
\prod_{ \substack{ a \in I \\ 1 \leq j \leq \vert I \vert }} \!\!\!\!\!
T ( X^a )^{ i_j } \!
\Big]
\mathrm{d}t
$$
So, by Hölder's inequality and since
$$
\mathbb{E}^{ \mu ^{\otimes \ell} } 
\big[ 
\prod_{a \in I } 
\vert 
T ( X^a ) 
\vert^{i_a p_i}
\big]^{1/p_i} 
=
\prod_{ a \in I}
\mathbb{E}^\mu
\big[ 
\vert 
T ( X ) 
\vert^{i_a p_i}
\big]^{1/p_i},
$$
we get
\begin{align*}
\Big\vert 
f ( r ) 
\!-\!
\sum_{k = \ell }^{n} 
\frac{f^{(k)} ( 0 ) }{k ! } r^k\!
\Big\vert
\!
\leq 
r^n 
C_{n,\ell}
\Vert T \Vert_{\mathbb{L}^p (\mu)}^\ell
\int_0^1
\!\!\!
\sup_{ I \subset \llbracket 1, \ell \rrbracket } 
\sup_{ \substack{  i \in \mathbb{N}^{ \vert I \vert } \\ \vert i \vert_1 = n } }
\mathbb{E}^{ \mu^{\otimes \ell } }
\Big[ 
\vert 
\Delta_{ r t T^{\otimes I} ( X )} \big[
\partial^{n}_{\mathbf{x}^i_I} F \big] ( X ) 
\vert^{p_i'}
\Big]^{1/p_i'} 
\mathrm{d}t
\end{align*}

\ep

\noindent \textbf{Proof of Proposition \ref{prop:approx 1}}. We distinguish the settings $\mathbf{d} = \W_p $ and $ \W_p^{\rm ad}$.

\noindent{\underline{$1.$ Wasserstein ball DRO setting.}} We first prove that equation \eqref{eq:first reform gen dist} holds. Let $
\Pi ( \mu ) 
:=
\lbrace 
\pi \in \mathcal{P} ( \R \times \R ) 
\,\,\, 
\pi \circ X^{-1} = \mu 
\rbrace
$. We can rewrite 
$$
G^{\W_p} ( r ) 
=
\sup_{ \pi \in \Pi ( \mu )} \phi ( \pi ) 
\,\, \text{ where } \phi (\pi) := g( \pi \circ \left. X' \right.^{-1} ) + \chi_{D_p (\mu, r) } ( \pi ) 
$$
and $ \chi_{D_p (\mu, r) } $ is the characteristic function of $D_p (\mu, r)$ defined by \begin{eqnarray*}
D_p (\mu, r) 
:=
\big\{\pi\in\Pc_p( \X ):~\pi\circ X^{-1}=\mu
  ~\mbox{and}~
  \E^\pi|X-X'|^p\le r^p
\big\}.
\end{eqnarray*}
It is clear that $D_p (\mu, r)$ is closed with respect to the weak topology, and $ g $ is continuous with respect to the weak topology; hence, $ \phi $ is lower semi-continuous with respect to the weak topology. Furthermore, by Assumption \hyperref[ass:reformulation 1 wasserstein]{${\rm A}_{\W_p}$}, $\mu$ is atomless, so by Lemma $2.1$ of \cite{beiglbock2018denseness}, the set 
$
\Pi_0 ( \mu ) 
:=
\lbrace 
\mu ( \mathrm{d}x ) \delta_{\phi ( x ) } ( \mathrm{d}x' ) 
,
\phi : \R \rightarrow \R \,\,\, \text{ is measurable} 
\rbrace
$
is dense in
$
\Pi ( \mu ) $ for the weak topology. This shows that the maximisation over $\Pi(\mu)$ reduces to the optimisation over Monge transport plans $T$ that satisfy the appropriate integrability condition, which after a change of variable exactly gives equation \eqref{eq:first reform gen dist}. 

\noindent We now prove that equation \eqref{eq:formulation 2 gen dist} holds. By applying the Taylor expansion for linear functional derivative of Proposition \ref{prop:Taylor expansion measure order n}, we have for $ T \in \mathbb{L}^p (\mu)$, 
$$
 g(\mu^{r, T})
 =
 P_{\mu} ( \mu^{r, T} ) 
+
R_n ( \mu^{r, T},\mu ) 
$$
where $ P_{\mu} ( \mu^{r, T} ) 
 =
 g(\mu)
+
\sum_{k = 1}^n 
\frac{1}{k!}
\int_{\R^k}
\delta^{k}_m g( \mu, x )(\mu^{r, T}-\mu)^{\otimes k} ( \mathrm{d } x )$ and 
$$
R_n ( \mu^{r, T},\mu ) =
\int_0^1 \frac{ ( 1 - t )^{n-1 }}{ ( n - 1 ) !} 
\int_{\R^{n}}
\big( 
\delta^{n}_m g( \bar\mu_t , x )
-
\delta^{n}_m g( \mu , x )
\big)
(\mu^{r, T} -\mu)^{ \otimes n} ( \mathrm{d} x )
\mathrm{d}t 
.$$
Hence, we have 
$$
\Big\vert 
\sup_{ \substack{ T \in \mathbb{L}^p ( \mu ) \\ \Vert T \Vert_{\mathbb{L}^p ( \mu ) } \leq 1 } }
g ( \mu^{r, T} ) 
-
\sup_{\substack{ T \in \mathbb{L}^p ( \mu ) \\ \Vert T \Vert_{\mathbb{L}^p ( \mu) } \leq 1 } } 
 P_{\mu} ( \mu^{r, T} ) 
\Big\vert 
\leq 
I_n ( r ) 
\,\,
\text{where } 
I_n ( r ) 
:= \! \!
\sup_{\substack{ T \in \mathbb{L}^p ( \mu ) \\ \Vert T \Vert_{\mathbb{L}^p ( \mu ) } \leq 1 } } 
\Big\vert 
R_n ( \mu^{r, T},\mu )
\Big\vert
.$$
Using equation \ref{eq:repres int graphe} of Lemma \ref{Lemma: expansion wrt graph} we know that for $ 0 \leq t \leq 1 $,
\begin{align*}
&\int_{\R^{n}}
\big( 
\delta^{n}_m g(\bar\mu_t, x )
-
\delta^{n}_m g( \mu , x )
)
(\mu^{r, T}-\mu)^{ \otimes n} ( \mathrm{d} x)
\\
&=
r^n
\int_{ (0, 1 )^n} 
\mathbb{E}^{\mu^{\otimes n}} \Big[
\Delta_{ \lambda T^{\otimes n } } \big[ F( \bar{\mu}_t , \cdot ) - F( \mu , \cdot ) 
\big]
( X ) 
\prod_{i=1}^n T ( X^i )
 \Big]
\mathrm{d}\lambda
\end{align*}
where $  F( \mu , x ) = 
\partial^{n}_{ \mathbf{x}^{ \llbracket 1 , n \rrbracket } } \delta^{n}_m g ( \mu , x )$ and $ \bar{\mu} _t  := (1-t) \mu^{r,T} + t \mu $. By Hölder's inequality and Fubini's theorem, we get 
\begin{align*}
I_n ( r ) 
&\leq 
r^n 
\!\!\!
\sup_{\substack{ T \in \mathbb{L}^p ( \mu ) \\ \Vert T \Vert_{\mathbb{L}^p ( \mu ) } \leq 1 } } 
\int_0^1 
\int_{ ( 0, 1 )^n }
\frac{ ( 1 - t )^{n-1 }}{ ( n - 1 ) !} 
\big\Vert
\Delta_{ r \lambda T^{\otimes n } } \big[ 
F( \bar{\mu}^{r, T}_t , \cdot) 
-
F ( \mu , \cdot ) 
\big](X)
\big\Vert_{\mathbb{L}^{p'} ( \mu^{\otimes n} )}
\mathrm{d}\lambda
\mathrm{d}t
.\end{align*} 
By Corollary \ref{corr:cont map thm measure}, for $ 0\leq t \leq 1 $ and $ \lambda \in ( 0, 1 )^n $ 
$$
\sup_{ \substack{ T \in \mathbb{L}^p ( \mu ) \\ \Vert T \Vert_{\mathbb{L}^p ( \mu ) } \leq 1 } } 
\big\Vert
\Delta_{ r \lambda T^{\otimes n } } \big[ 
F( \bar{\mu}^{r, T}_t , \cdot ) 
-
F ( \mu , \cdot ) 
\big]
\big\Vert_{\mathbb{L}^{p'} ( \mu^{\otimes n} )}
\xrightarrow[r \rightarrow 0]{} 0 
.$$
Furthermore, by Assumption \hyperref[ass:reformulation 2 gen dist]{${\rm B}_{\W_p}$} \ref{cond:estimate diff gen dist} on $g$, we can bound the left-hand side uniformly in $r$, proving that $ I_n(r) \rightarrow 0 $ by dominated convergence. Hence, we proved that
$$
\sup_{\substack{ T \in \mathbb{L}^p ( \mu ) \\ \Vert T \Vert_{\mathbb{L}^p ( \mu ) } \leq 1 } } 
g ( \mu^{r, T} ) 
=
g(\mu)
+
\sup_{ \substack{ T \in \mathbb{L}^p ( \mu ) \\ \Vert T \Vert_{\mathbb{L}^p ( \mu ) } \leq 1 } } 
\sum_{\ell = 1}^n 
\frac{1}{\ell!}
\int_{ \R^\ell }
\!\!\! 
\delta^{\ell}_m g( \mu , x )(\mu^{r, T}-\mu)^{\otimes \ell} (dx) 
+
\circ (r^n ) 
.$$
Now, set 
$
f_{\ell, T} ( r ) 
:=
\int_{ \R^\ell }
\delta^{\ell}_m g( \mu , x )(\mu^{r, T}-\mu)^{\otimes \ell} (dx) 
$, by formula \ref{eq:diff at 0 int graph } of Lemma \ref{Lemma: expansion wrt graph} and Taylor's formula 
$$
f_{\ell, T} ( r ) 
=
\hat{\Lc}^{r, \ell}_{\W_p} (T)  
+
r^n
R_{n, \ell, T } ( r)
$$
where $\hat{\Lc}^{r, \ell}_{\W_p}$ is defined by equation \eqref{eqdef:operator polyn} and $ R_{n, \ell, T } ( r ) := \int_{0}^1
\frac{ ( 1 - t )^{n-1} }{ ( n - 1 )! } 
\big( f^{ ( n ) }_{\ell, T} ( r t ) - f^{ ( n ) }_{\ell, T} ( 0 ) \big)
\mathrm{d}t$. A direct consequence of this equality is 
\begin{align*}
\Big\vert
g( \mu ) + \!\!\!
\sup_{ \substack{ T \in \mathbb{L}^p ( \mu ) \\ \Vert T \Vert_{\mathbb{L}^p ( \mu ) } \leq 1 } } 
\sum_{\ell = 1}^n 
\frac{1}{\ell!}
f_{\ell, T} ( r ) 
-
\!\!\!
\sup_{ \substack{ T \in \mathbb{L}^p ( \mu ) \\ \Vert T \Vert_{\mathbb{L}^p ( \mu ) } \leq 1 } } 
\!\!
L^{n, r} ( T ) 
\Big\vert 
\leq 
r^n 
\sum_{\ell = 1}^n 
\sup_{ \substack{ T \in \mathbb{L}^p ( \mu ) \\ \Vert T \Vert_{\mathbb{L}^p ( \mu ) } \leq 1 } } 
\vert R_{n, \ell ,T} ( r ) \vert
\end{align*}
Now, using the estimate \ref{eq:estim at 0 int graph} of Lemma \ref{Lemma: expansion wrt graph}, we have 
$$
\sum_{\ell = 1}^n 
\sup_{ \substack{ T \in \mathbb{L}^p ( \mu ) \\ \Vert T \Vert_{\mathbb{L}^p ( \mu ) } \leq 1 } } 
\vert R_{n, \ell ,T} ( r ) \vert
\leq 
C_n
\sup_{ \substack{ 1 \leq \ell \leq n , \,\, I \subset \llbracket 1, \ell \rrbracket \\ 
 i \in \mathbb{N}^{ \vert I \vert } , \,\vert  i \vert_1 = n } }
\sup_{\substack{ T \in \mathbb{L}^p ( \mu ) \\ \Vert T \Vert_{\mathbb{L}^p ( \mu ) } \leq 1 } } 
\int_0^1 
\Big\Vert 
\Delta_{ r t T^{\otimes I}} \big[
\partial^{n}_{\mathbf{x}^{ i} } \delta_m^\ell g ( \mu, \cdot ) \big]
\Big\Vert_{ \mathbb{L}^{p_i'} ( \mu^{\otimes \ell } )}
\mathrm{d}t
.
$$
Again, the right hand side goes to $0$ by Lemma \ref{lemma: cont thm Lp} and dominated convergence theorem.

\noindent{\underline{$2.$ The adapted Wasserstein ball DRO setting.}} 
The proof follows the same structure. The only difference is that we need to be careful with bi-causal couplings and causal couplings. But, by Lemma $3.1$ of \cite{bartlsensitivityadapted}, bi-causal couplings are dense in the set of causal couplings for the weak topology. Furthermore, by Theorem $3.1$ of \cite{beiglbock2018denseness}, causal Monge couplings are dense in the set of causal couplings. For the proof of the second point, the difference is in the expression, which follows from a straightforward extension of Lemma \ref{Lemma: expansion wrt graph} to the two-dimensional case. Let $ T \in \mathbb{L}^p_{\rm ad} ( \mu )$, $F : ( \R^2 )^{\ell} \rightarrow \R $ a $C^n$ function and $ f ( r ) := \int_{ ( \R^2 )^{\ell} } F( x_1, x_2 ) ( \mu^{r, T} - \mu ) ( \mathrm{d}x_1, \mathrm{d}x_2 )$. Then, letting $\mathbf{1}^\ell = ( 1, \cdots, 1 ) \in \N^\ell$, we have 
$$
f ( r ) =
r^\ell
\sum_{ \substack{ j \in \N^\ell \\ j \leq \mathbf{1}^\ell} }
\int_{ (0, 1 )^\ell} 
\mathbb{E}^{ \mu^{\otimes \ell} }
			\Big[
\partial^{\ell}_{ \mathbf{x}_1^j \mathbf{x}_2^{\mathbf{1}^\ell - j } } F ( X + r \lambda T^{\otimes \ell} )
\prod_{a=1}^\ell T_1 ( X^a_1 )^{j_a} T_2 ( X^a )^{1 - j_a} 
 \Big]
\mathrm{d}\lambda
$$
with 
$
X + r \lambda T^{\otimes n } ( X )
:=
\big( X^1 + r \lambda_1 T ( X^1 ) , \cdots , X^\ell + r \lambda_\ell T ( X^\ell ) \big) 
.$
For $ 0\leq s \leq n $,
$$
f^{(s)} ( 0 ) 
=
\mathds{1}_{\{ s \geq \ell \}}
 \sum_{ \substack{ i \in \mathbb{N}_*^\ell \\ \vert i \vert_1 = s} }
 \sum_{ \substack{ j \in \N^\ell \\ j \leq i} }
\frac{ s!
}{
i!
}
\mathbb{E}^{\mu^{\otimes \ell}} \Big[
\partial^{\ell}_{ \mathbf{x}_1^j \mathbf{x}_2^{i - j } } F ( X )
\prod_{a=1}^\ell T_1 ( X^a_1 )^{j_a} T_2 ( X^a )^{i_a - j_a} 
 \Big] 
.$$
We have the following estimate
$$
\Big\vert 
f( r ) 
-
\sum_{s = \ell }^{n} 
\frac{ f^{ ( s )} ( 0 ) }{s ! }
r^s 
\Big\vert 
\leq 
r^n 
C_{n, \ell} 
\Vert T \Vert_{\mathbb{L}^p (\mu )}^\ell
\sup_{ I \subset \llbracket 1, \ell \rrbracket} 
\sup_{\substack{  i \in \mathbb{N}^{ \vert I \vert } \\ \vert i \vert_1 = n }}
\int_0^1
\Vert 
\Delta_{ r t T ( X ) } [
\partial^{n}_{\mathbf{x}^i } F ] ( X ) 
\Vert_{ \mathbb{L}^{p_i'} ( \mu^{\otimes \ell} )}
\mathrm{d}t
.$$
\ep
\subsection{Proof of Proposition \ref{prop:expansion order n} }

In this subsection, given a map $F : \mathbb{L}^p (\mu) \rightarrow \R$ for $p > 1 $, that is Fréchet differentiable, we denote by $ \nabla F $ the unique element in $\mathbb{L}^{p'} (\mu)$ such that for $T, h \in \mathbb{L}^p (\mu)$, $F(T +h ) = F(T) + \E^{\mu} [ \nabla F (T) h ]+ \circ_{\mathbb{L}^p (\mu)} (h) $. In the following, for $\mathbf{d}$ being either $ \W_p $ or $\W_p^{\rm ad}$, and $ T \in \mathbb{L}^p_{\mathbf{d}} (\mu) $ and $ \lambda \in \R$, 
\begin{equation}\label{eqdef:lagrangien} 
\mathcal{L}_{\mathbf{d}}^{n ,r} (T, \lambda) 
=
L_{\mathbf{d}}^{n ,r} (T) - \lambda \Vert T \Vert_{\mathbb{L}^p_{\mathbf{d} } (\mu)}^p
.\end{equation}

\noindent{\underline{$1.$ The Wasserstein ball DRO setting.}} We proceed in $ 3$ steps. \textbf{Step (i):} we compute the gradient (in the Fréchet sense) of $ \mathcal{L}^{n ,r}_{\W_p}$ whose differentiability is a direct consequence of the differentiability of continuous multilinear mappings and of the differentiability of $T \mapsto \mathbb{E}^{\mu} \big[ \vert T ( X ) \vert^p \big]$ as $ p > 1$. 
Using our notations, we set 
 $
 L^{n, r}_{\W_p} ( T ) 
 :=
 \sum_{i =1}^{n} r^i \phi_i ( T, \cdots, T ) 
 $
which gives the expansion
 $$
 \nabla_T \mathcal{L}^{n , r}_{\W_p} ( T, \lambda ) 
 =
 \sum_{i =1}^{n} \sum_{j =1}^i r^i \nabla_j \phi_i ( T, \cdots, T ) 
 -
 \lambda p \, w_p( T ),
 $$
 where $ \phi_i : \mathbb{L}^p ( \mu )^i \rightarrow \R$ is a continuous symmetric (by Proposition \ref{prop:symm k-th lin deriv}) $i-$linear map and $\nabla_j \phi_i : \mathbb{L}^p( \mu )^{i-1} \rightarrow \mathbb{L}^{p'} ( \mu ) $ is a continuous $(i-1)-$linear map. 

 \noindent{\textbf{Step (ii):}} we now proceed to the construction of $ T^1, \cdots, T^n $ and $ \lambda_1, \cdots, \lambda_n$. We begin by the construction of $T^1$ and $\lambda_1$. Choosing $ T^1 \in \mathbb{L}^p( \mu )$ and $\lambda_1 > 0$ to adjust, we have
$
\nabla_T \mathcal{L}^{n , r} ( T^1, r \lambda_1 ) 
=
r \partial_x \delta_m g ( \mu, X ) 
-
r \lambda_1 p \, w_p ( T^1 )
+
\circ_{\mathbb{L}^{p'} ( \mu ) }  ( r ) 
.$ However, we want them to satisfy $\nabla_T \mathcal{L}^{n , r} ( T^1, r \lambda_1 ) 
=
\circ_{\mathbb{L}^{p'} ( \mu ) }  ( r ) 
$.
Choosing $ T^1$ and $\lambda_1$ such that all terms of order $r$ cancel yields
$$
 \partial_x \delta_m g ( \mu, X ) 
=
\lambda_1 p \, w_p( T^1 ) \, \, \text{and} \,\, 
 \mathbb{E}^{\mu}\left[ \vert T^1 \vert^p \right] = 1 
.$$
This equation is satisfied for $T^1 = T^1_{\W_p}$ defined by equation \eqref{eqdef:first order transport} and $\lambda_1 
= \frac{ \Vert \partial_x \delta_m g \Vert_{\mathbb{L}^{p'} ( \mu )} }{p}.
$

\noindent{\underline{Assume that $ T^1, \cdots, T^k $ and $\lambda_1, \cdots, \lambda_k $ are constructed}}. Let $ T^{k+1} \in \mathbb{L}^p ( \mu )$ and $\lambda_{k+1} \in \R$ to adjust. Define for $ j \leq k +1$,  $ \TT^j := ( T^i )_{1 \leq i \leq j} $, $\Lambda_j := ( \lambda_i )_{1 \leq i \leq j}$, $\bar{\TT}_{j} (r) := S_{j} (r, \TT^{j})$ and $ \bar{\Lambda}_{j}(r) := rS_{j} (r, \Lambda_{j} ) $. We want them to satisfy 
\begin{equation}\label{eq:cond order k+1}
\nabla_T \mathcal{L}^{n, r}_{\W_p} \big( \bar{\TT}_{k+1} (r),  \bar{\Lambda}_{k+1}(r) \big) 
=
\circ_{\mathbb{L}^{p'} ( \mu ) } ( r^{k+1} ) 
.
\end{equation}
By definition of $\mathcal{L}^{n, r}_{\W_p}$ (see equation \eqref{eqdef:lagrangien}), we have 
\begin{align*}
&\nabla_T \mathcal{L}^{n, r}_{\W_p}\big( \bar{\TT}_{k+1} (r),  \bar{\Lambda}_{k+1}(r) \big) =
\nabla_{T}
L^{n, r}_{\W_p} \big(  \bar{\TT}_{k} (r) + r^k T^{k+1} \big) 
-
p
 \bar{\Lambda}_{k+1}(r)  w_p \big( 
 \bar{\TT}_{k+1} (r)
 \big)
.
\end{align*}
Now, with our notations,
$$
\nabla_{T}
L^{n, r}_{\W_p} \big(  \bar{\TT}_{k+1} (r) \big) 
=
\sum_{i =1}^{n} \sum_{j =1}^i r^i \nabla_j \phi_i \big( \bar{\TT}_{k+1} (r), \cdots,   \bar{\TT}_{k+1} (r)\big)
.$$ By $(i-1)-$linearity of $ \nabla_j \phi_i$ and continuity, we get 
\begin{equation}\label{eq:expansion L T_k}
\nabla_{T}
L^{n, r}_{\W_p} \big(  \bar{\TT}_{k+1} (r) \big) 
=
\nabla_{T}
L^{n, r}_{\W_p} \big(\bar{\TT}_{k} (r) \big) 
+
\circ_{\mathbb{L}^{p'} ( \mu ) }  ( r^{k+1} ) 
.\end{equation}
By Assumption, $ p > n $, so the map $ w_p \in C^{n-1} ( \R, \R) $; hence, by Taylor's formula:
$$
w_p ( \bar{\TT}_{k+1} (r) )
=
w_p (  \bar{\TT}_{k} (r) ) + r^k w_p' ( \bar{\TT}_{k} (r)) T^{k +1} + r^k R(r, T)
$$
where $R(r, T) =  T^{k+1} \int_0^1 \big[ w_p' \big(  \bar{\TT}_{k} (r) + t r^k T^{k+1} \big) - w_p' \big( \bar{\TT}_{k} (r) \big)  \big] \mathrm{d}t 
$. By Assumption \hyperref[ass:reformulation 2 gen dist]{${\rm B}_{\W_p}$} \ref{cond:regularity on g dist} and since $ T^{k+1} \in \mathbb{L}^{p'} ( \mu )$, we have $ R(r, T) = \circ_{\mathbb{L}^{p'} ( \mu )} ( 1 ) $. Furthermore, by Assumption, $ p > n $ so $r^k w_p' (\bar{\TT}_{k} (r) ) T^{k +1} = r^{k} (p - 1 ) \vert T^1 \vert^{p-2} T^{k+1} + \circ_{\mathbb{L}^{p'} ( \mu ) }  ( r^{k} ) $. Taking the product we get 
\begin{equation}\label{eq:DL prod w lambda ordre k+1}
\begin{split}
&\bar{\Lambda}_{k+1}(r)
w_p \big(
 \bar{\TT}_{k+1} (r)
 \big)
 \\
 &= \!
\bar{\Lambda}_{k}(r) w_p \big( \bar{\TT}_{k} (r) \big) 
 \!+\! 
 r^{k+1} 
 \big( 
 \lambda_{k+1} w_p \big( \bar{\TT}_{k} (r) \big) 
 +( p-1) \lambda_1 \vert T_1 \vert^{p-2} T^{k+1} 
 \big) 
 \!+\!
 \circ_{\mathbb{L}^{p'} ( \mu ) }  ( r^{k+1} )
 \\
 &=
 \bar{\Lambda}_{k}(r) w_p \big( \bar{\TT}_{k} (r) \big) 
 + 
 r^{k+1} 
 \big( 
 \lambda_{k+1} w_p \big( T^1 \big) 
 +
 ( p-1) \lambda_1 \vert T^1 \vert^{p-2} T^{k+1} 
 \big) 
 +
 \circ_{\mathbb{L}^{p'} ( \mu ) }  ( r^{k+1} )
.
 \end{split}
\end{equation}
Putting equations \eqref{eq:DL prod w lambda ordre k+1} and \eqref{eq:expansion L T_k} together yields, 
\begin{align*}
&\nabla_T \mathcal{L}^{n, r}_{\W_p} \big( \bar{\TT}_{k+1} (r) , \bar{\Lambda}_{k+1}(r) \big) 
\\
&=\!
\nabla_T \mathcal{L}^{n, r}_{\W_p} \big( \bar{\TT}_{k} (r) , \bar{\Lambda}_{k}(r) \big) 
\!-\!
p
 r^{k+1} 
 \big( 
 \lambda_{k+1} w_p( T^1 )
\! + \!
 ( p \!-\! 1 ) \lambda_1 T^{k+1} \vert T^1 \vert^{p-2} 
 \big) 
\!+\!
\circ_{\mathbb{L}^{p'} ( \mu ) }  ( r^{k+1})
\\
&= 
\nabla_{T}
\mathcal{L}^{n, r}_{\W_p} \big(\bar{\TT}_{k} (r) \big) 
-
p
 r^{k+1} 
 \big( 
 \lambda_{k+1} w_p( T^1 )
 +
 ( p-1 ) \lambda_1 T^{k+1} \vert T^1 \vert^{p-2} 
 \big) 
+
\circ_{\mathbb{L}^{p'} ( \mu ) }  ( r^{k+1})
.\end{align*}
We need to identify the term of order $k+1$. Using the multi-linearity of $\nabla_i \phi_j$ and the definition of $L^{n, r}_{\W_p}$ , we easily have the existence of $ V_1, \cdots, V_{k+1} \in \mathbb{L}^{p'} ( \mu ) $ such that for 
$
\nabla_{T}
L^{n, r}_{\W_p} \big( \bar{\TT}_{k} (r) \big) 
=
\sum_{j = 1}^{k+1} r^{j} V_j 
+
\circ_{\mathbb{L}^{p'} ( \mu ) }  ( r^{k+1} )
$.
Also, since $w_p$ is $C^{n-1}$, there exists $ u_1, \cdots, u_{k+1} \in \mathbb{L}^{p'} ( \mu )$, such that for $ 1 \leq i \leq k+1$, $ u_i \in \sigma(T^1, \cdots, T^i )$ and
$$
\bar{\Lambda}_{k}(r) w_p ( \bar{\TT}_{k} (r) ) 
=
\sum_{j = 1}^{k+1} r^{j} u_j 
+
\circ_{\mathbb{L}^{p'} ( \mu ) }  ( r^{k+1} )
.
$$
By construction of $ T^1, \cdots, T^k $ and $ \lambda_1, \cdots, \lambda_k$, we have, 
$$\nabla_T \mathcal{L}^{n, r}_{\W_p} \big( \bar{\TT}_{k} (r) , \bar{\Lambda}_{k}(r) \big) = \circ_{ \mathbb{L}^{p'} ( \mu )} ( r^k ) $$ hence,
$$ \nabla_T
\mathcal{L}^{n, r}_{\W_p} \big(\bar{\TT}_{k} (r) ,  \bar{\Lambda}_{k}(r) \big) 
=
r^{k+1} \big( V_{k+1} - p u_{k+1} \big) 
+
\circ_{\mathbb{L}^{p'} ( \mu ) }  ( r^{k} )
.$$
With the last equation along with equation \eqref{eq:DL prod w lambda ordre k+1}, we see that choosing $\lambda_{k+1} $ and $ T^{k+1}$ such that 
\begin{equation*}
p
 \big( 
 \lambda_{k+1} w_p( T^1 )
 +
 ( p-1 ) \lambda_1 T^{k+1} \vert T^1 \vert^{p-2} 
 \big)
 =
 V_{k+1} - p u_{k+1} 
\end{equation*}
Now, the condition $ \Vert \bar{\TT}_{k+1} (r) \Vert_{\mathbb{L}^p (\mu)}^p = 1 + \circ ( r^{k} ) $ needs to be fulfilled. Following the same computations we did before, we have 
\begin{align*}
\Vert  \bar{\TT}_{k+1} (r) \Vert^p_{\mathbb{L}^p ( \mu )} 
&= 
\Vert \bar{\TT}_{k} (r) \Vert^p_{\mathbb{L}^p ( \mu )} + r^k \E^{\mu} \Big[ w_p \big( \bar{\TT}_{k} (r) \big) T^{k+1} \Big] + \circ ( r^k ) 
\\
&=
\Vert \bar{\TT}_{k} (r) \Vert^p_{\mathbb{L}^p ( \mu )} + r^k \E^{\mu} \Big[ w_p \big( T^1 \big) T^{k+1} \Big] + \circ ( r^k ) 
.\end{align*}
And, by construction, $\Vert \bar{\TT}_{k} (r) \Vert^p_{\mathbb{L}^p ( \mu )} = 1 + \circ ( r^{k-1} )$, so by uniqueness of the development and since $ p > n$, there exists $\alpha \in \R $ such that $\Vert \bar{\TT}_{k} (r) \Vert^p_{\mathbb{L}^p ( \mu )} = 1 + r^{k} \alpha + \circ ( r^{k} )$ hence, 
$$
\Vert  \bar{\TT}_{k+1} (r) \Vert^p_{\mathbb{L}^p ( \mu )} 
= 
 1 + r^{k} \alpha 
+ p r^k \E^{\mu} \Big[ w_p \big( T^1 \big) T^{k+1} \Big] + \circ ( r^k ) 
.$$
Hence, in order to have equation \eqref{eq:cond order k+1} satisfied, it is sufficient for $ \lambda_{k+1} $ and $T^{k+1}$ to satisfy
$$
\left\{
 \begin{array}{ll}
p
 \big( 
 \lambda_{k+1} w_p( T^1 )
 +
 ( p-1 ) \lambda_1 T^{k+1} \vert T^1 \vert^{p-2} 
 \big)
 =
 V_{k+1} - p u_{k+1} 
 \\
 p\E^{\mu} \Big[ w_p \big( T^1 \big) T^{k+1} \Big] = -\alpha
 \end{array}
\right.
.$$
Which is satisfied by 
$$
\left\{
 \begin{array}{ll}
 \lambda_{k+1} 
 =
 \frac{1}{p} \Big( \E^{\mu} \big[ T^1 ( V_{k+1} - p u_{k+1} ) \big] + (p-1) \alpha \lambda_1 \Big)
 \\
T^{k+1} = \frac{1
}
{ p (p-1) \lambda_1 \vert T^1 \vert^{p-2} 
} 
\big( V_{k+1} - p u_{k+1} - p \lambda_{k+1} w_{p}(T^1) \big)
 \end{array}
\right.
.$$
The last recurrent equation is explicit since by construction, $V_{k+1}, u_{k+1}$ and $\alpha$ do not depend on $T_{k+1}$ or $\lambda_{k+1}$. Furthermore, by the Assumption \hyperref[ass:exist approx saddle point wasserstein]{${\rm C}_{\W_p}$} \ref{cond:div by gradient wasserstein}, $T^{k+1} \in \mathbb{L}^{p} ( \mu ) $.

\noindent{ \bf{Step (iii)}:} we now use $ T^1, \cdots, T^n $ to construct a $ \circ ( r^{n} ) $ maximiser of the problem $\sup_{ \substack{T \in \mathbb{L}^p_{\W_p} ( \mu ) \\ \Vert T \Vert_{\mathbb{L}^p ( \mu ) } \leq 1} } L^{n, r}_{\W_p} \big( T \big) $. 
First, we prove that $ T \mapsto L^{n, r}_{\W_p} \big( T \big) $ is a concave function. For $ h \in \mathbb{L}^p ( \mu ) $, since $L^{n, r} $ is a polynomial function it is twice differentiable; for $T \in \mathbb{L}^p ( \mu ) $, denote by $D^{2} L^{n, r}_{\W_p} ( T ) $ its second differential computed at $T$, seen as a bi-linear mapping. We have
\begin{align*}
 D^{2} L^{n, r} (T)  ( h, h ) 
&=
r^2 
\Big( 
\mathbb{E}^{\mu} \big[ \partial^2_{xx} \delta_m g( \mu, X ) h ( X )^2 \big]
\!\! +\!\!
\mathbb{E}^{\mu^{\otimes 2}} \big[ \partial^2_{x^1 x^2} \delta^2_m g ( \mu, X^1, X^2 ) h ( X^1 ) h ( X^2 ) \big]
\Big)
\\
& \,\,\,\,\,\, +
r^3 R_T ( h, h ) 
\end{align*}
where $R_T$ is a polynomial function on $T$ , and $R_T$ is bilinear and continuous with respect to $h$. By Condition \ref{cond:regularity on g dist} of Assumption \hyperref[ass:reformulation 2 gen dist]{$\textbf{\rm B}_{\W_p}$} on $g$, we have 
$$
\sup_{ \Vert T \Vert_{\mathbb{L}^p ( \mu )} \leq 1, \Vert h \Vert_{\mathbb{L}^p ( \mu )} \leq 1} 
 R_T ( h, h ) 
 =
 C < + \infty
.$$
 Now, remark that by strict concavity Assumption \hyperref[ass:exist approx saddle point wasserstein]{$\textbf{\rm C}_{\W_p}$} \ref{cond:convexity wasserstein}, we have
 $$
 \sup_{
 \substack{ \Vert T \Vert_{\mathbb{L}^p ( \mu )} \leq 1 \\ \Vert h \Vert_{\mathbb{L}^p ( \mu )} \leq 1 }} 
\mathbb{E}^{\mu}\big[ \partial^2_{xx} \delta_m g( \mu, X ) h ( X )^2 \big]
+
\mathbb{E}^{\mu^{\otimes 2 }}\big[ \partial^2_{x^1 x^2} \delta^2_m g ( \mu, X^1, X^2 ) h ( X^1 ) h ( X^2 ) \big]
\leq 
-\kappa
.$$
So, for $r$ small enough, we have 
$$
 \sup_{ \substack{ \Vert T \Vert_{\mathbb{L}^p ( \mu )} \leq 1 \\ \Vert h \Vert_{\mathbb{L}^p ( \mu )} \leq 1} }
\mathbb{E}^{\mu}\big[ \partial^2_{xx} \delta_m g( \mu, X ) h ( X )^2 \big]
+
\mathbb{E}^{\mu^{\otimes 2}} \big[ \partial^2_{x^1 x^2} \delta^2_m g ( \mu, X^1, X^2 ) h ( X^1 ) h ( X^2 ) \big]
+
r C 
< 0
.$$
Combining this inequality with the computation of the order two differential, we get that $L^{n, r}_{\W_p}$ is concave for $r$ small enough. Set $
\hat{T}_n (r) := \frac{\bar{\TT}_{n} (r)}{\Vert\bar{\TT}_{n} (r) \Vert_{\mathbb{L}^p ( \mu )}} 
$, by concavity
$$
\sup_{ \Vert T \Vert_{\mathbb{L}^p ( \mu )} \leq 1}
L^{n, r}_{\W_p} ( T )
\leq 
L^{n, r}_{\W_p} (\hat{T}_n (r) )
+
\sup_{ \Vert T \Vert_{\mathbb{L}^p ( \mu )} \leq 1 }
\mathbb{E}^{\mu}\Big[ 
\nabla_T L^{n, r}_{\W_p} (\hat{T}_n (r) )
\big( 
T - \hat{T}_n (r)
\big)
\Big]
.$$ 
We now want to prove that $\sup_{ \Vert T \Vert_{\mathbb{L}^p ( \mu )} \leq 1 }
\mathbb{E}^{\mu}\Big[ 
\nabla_T L^{n, r}_{\W_p} (\hat{T}_n (r) )
\big( 
T - \hat{T}_n (r)
\big)
\Big] = \circ(r^n)$. First, by Hölder's inequality,
$$
\sup_{ \Vert T \Vert_{\mathbb{L}^p ( \mu )} \leq 1 } \!\!\!\!\!
\mathbb{E}^{\mu}\Big[ 
\nabla_T L^{n, r}_{\W_p} (\hat{T}_n (r) )
\big( 
T \!- \hat{T}_n (r)
\big)
\Big]
\!
=
\!
\Vert
\nabla_T L^{n, r}_{\W_p} ( \hat{T}_n (r) )
\Vert_{\mathbb{L}^{p'} ( \mu )}
- 
\mathbb{E}^{\mu}\big[
\nabla_T L^{n, r}_{\W_p} (\hat{T}_n (r))
\hat{T}_n (r)
\big]
.$$ Now, recall that, by construction, $
\Vert \bar{\TT}_{n} (r)  \Vert^p_{\mathbb{L}^p ( \mu )}
=
1 
+
\circ \left( r^{n-1} \right) 
$, and that $\nabla_T L^{n, r}_{\W_p}$ is a polynomial mapping where $k-1$ linear terms are multiplied by $ r^k$. So, a quick computation proves that 
$$
\nabla_T L^{n, r}_{\W_p} ( \hat{T}_n (r) )=
\nabla_T L^{n, r}_{\W_p} \big( \bar{\TT}_{n} (r)  \big) 
+
\circ_{\mathbb{L}^{p'} ( \mu ) }  ( r^n ).
$$
Also, by construction, 
$
\nabla_T
\mathcal{L}^{n, r}_{\W_p} \big(  \bar{\TT}_{n} (r)  ,  \bar{\Lambda}_{n}(r) \big) 
=
\circ_{\mathbb{L}^{p'} ( \mu ) }  ( r^n )
$
which can be written as 
$$
\nabla_T
L^{n, r}_{\W_p} \big(  \bar{\TT}_{n} (r)  \big) 
=
p
 \bar{\Lambda}_{n}(r)
w_p \big( 
 \bar{\TT}_{n} (r) 
\big)
 +
\circ_{\mathbb{L}^{p'} (\mu) }( r^{n} ) 
.
$$
Since $ \lambda_1 > 0$, for $r$ small enough, $\vert 
  \bar{\Lambda}_{n}(r)
\vert =
  \bar{\Lambda}_{n}(r) $, hence, 
  $$\Vert
\nabla_T L^{n, r}_{\W_p} ( \bar{\TT}_{n})
\Vert_{\mathbb{L}^{p'} ( \mu )} = 
p
 \bar{\Lambda}_{n}(r)
\Vert \hat{T}_n (r)^{p-1} \Vert_{\mathbb{L}^{p'} ( \mu )}
$$
and 
$$
\mathbb{E}^{\mu}\Big[
\nabla_T L^{n, r}_{\W_p} (\bar{\TT}_{n} (r) )
\big(
\bar{\TT}_{n} (r) 
\big)
\Big]
=
\bar{\Lambda}_{n}(r)
\Vert \bar{\TT}_{n} (r)  \Vert^p_{\mathbb{L}^{p} ( \mu )}
.$$ Putting all of this together yields 
\begin{align*}
&\Vert
\nabla_T L^{n, r}_{\W_p} ( \hat{T}_n (r) )
\Vert_{\mathbb{L}^{p'} ( \mu )} 
- 
\mathbb{E}^{\mu}\Big[
\nabla_T L^{n, r}_{\W_p} ( \bar{\TT}_{n} (r)  )
\bar{\TT}_{n} (r) 
\Big]
\\
&=
p\bar{\Lambda}_{n}(r)
\big( 
\Vert \bar{\TT}_{n} (r)  \Vert_{\mathbb{L}^{p} ( \mu )}
-
\Vert \bar{\TT}_{n} (r)  \Vert^p_{\mathbb{L}^{p} ( \mu )}
\big)
\\
&=
p \bar{\Lambda}_{n}(r)
\big( 
( 1 + \circ (r^n ) )^{1/p}
-
( 1 + \circ (r^n ) )
\big)
=
\circ( r^n )
.\end{align*} 

\noindent{\underline{$2.$ The adapted Wasserstein ball DRO setting.}} The proof is essentially the same.

\noindent\textbf{Step (i):} we compute the gradient with respect to $ T_1 $ and $T_2$ of $\mathcal{L}^{n, r}_{\W_p^{\rm ad}}$. We can identify $\nabla_{T_1} \mathcal{L}^{n, r}_{\W_p^{\rm ad}}$ as an element of $\sigma(X_1) \cap \mathbb{L}^{p'} ( \mu )$. We have
$$
\nabla_{T_1} \mathcal{L}^{n, r}_{\W_p^{\rm ad}} ( T, \lambda ) 
\! = \! \nabla_{T_1} L^{n, r}_{\W_p^{\rm ad}} ( T, \lambda ) 
-\!
p \lambda w_p ( T_1 ) 
\,\text{and} \,
\nabla_{T_2} \mathcal{L}^{n, r}_{\W_p^{\rm ad}} ( T, \lambda ) 
\!=\!
\nabla_{T_2} L^{n, r}_{\W_p^{\rm ad}} ( T, \lambda ) 
\! -\!
p \lambda w_p ( T_2 )
$$
with 
$
\nabla_{T_1} L^{n, r}_{\W_p^{\rm ad}} ( T, \lambda ) 
 \! = \! 
\sum_{ k = 1}^n 
\! r^k \hat{K}_{k, 1} ( T ) (X_1)
$ and $
\nabla_{T_2} L^{n, r}_{\W_p^{\rm ad}} ( T, \lambda ) 
\!=\!
\sum_{ k = 1}^n 
 r^k \hat{K}_{k, 2} ( T )
$
where 
\begin{equation*}
\begin{split}
 \hat{K}_{k, 1} ( T ) &:=
 \Sum_{
\substack{ 1 \leq \ell \leq k \\
 i \in (\mathbb{N}^*)^\ell , j \in \N^\ell
\\
\vert i \vert_1 = k
, j \leq i
\\
1 \leq a \leq \ell
}}
\frac{ 1 }{\ell! i!}
j_a T_1( X_1 )^{j_a - 1} 
\E_1^{\mu} \Big[ T_2(X)^{i_a - j_a}
\Kc_{\ell, a, i, j} ( T, X)
\Big]
\\
 \hat{K}_{k, 2} ( T ) &:=
\sum_{ \substack{
1 \leq \ell \leq k \\
 i \in (\mathbb{N}^*)^\ell , j \in \N^\ell
\\
\vert i \vert_1 = k
, j \leq i
\\
1 \leq a \leq \ell
}} 
\!\!\!
\frac{ 1 }{\ell! i!}
(i_a - j_a )
T_2(X)^{i_a - j_a - 1}
T_1( X_1 )^{j_a } 
\Kc_{\ell, a, i, j} ( T , X)
\end{split}
\end{equation*}
where 
$$\Kc_{\ell, a, i, j} ( T , x)
:=
\hat{\E}^{\mu^{\otimes \ell - 1}}
\Big[
\partial^{i}_{ \mathbf{x}_1^j \mathbb{x}_2^{i-j} }
 \delta_m^\ell g \big( \mu, ( \hat{X}^{-a}, x) \big)
 \prod_{ \substack{ z = 1 \\ z \neq a }}^{\ell} 
 T_1 ( \hat{X}^z _1 )^{i_z} 
 T_2 ( \hat{X}^z )^{i_z - j_z}
\Big]
$$
with $( \hat{X}^{-a}, x) := ( \hat{X}^1, \cdots, \hat{X}^{a-1}, x ,\hat{X}^{a+1}, \cdots, \hat{X}^{\ell -1} )$.

\noindent{\textbf{Step (ii)}:} we now construct $T^1_1, \cdots, T^n_1, T^1_2, \cdots T^n_2$ and $ \lambda_1 ,\cdots, \lambda_n$. We begin by the construction of $T^1_1$, $T^1_2$ and $\lambda_1$. Following the steps of the previous case yields
\begin{align*}
&\nabla_{T_1} \mathcal{L}^{n, r}_{\W_p^{\rm ad}} \left( T_1, T_2 , \lambda \right) 
=
r \E^\mu \big[ \partial_{x_1} \delta_m g ( \mu, X ) \vert X_1 \big] 
-
p\lambda w_p ( T_1 ) 
+
\circ_{\mathbb{L}^{p'} (\mu_1) } ( r ) 
\end{align*}
and
\begin{align*}
\nabla_{T_2} \mathcal{L}^{n, r}_{\W_p^{\rm ad}} ( T_1, T_2 , \lambda ) 
=
r \partial_{x_2} \delta_m g \left( \mu, X \right) 
-
p \lambda w_p ( T_2)
+
\circ_{\mathbb{L}^{p'} (\mu) } ( r ) 
.
\end{align*}
Hence it is clear that it is sufficient to take $ T^1_{1}, T^1_{2}$ and $\lambda_1$ satisfying
$$
\left\{
 \begin{array}{ll}
\E_1^\mu [ \partial_{x_1} \delta_m g ( \mu, X ) ] 
=
p
\lambda_1 w_p ( T^1_1 ) 
\\
 \partial_{x_2} \delta_m g ( \mu, X ) 
=
p
\lambda_1 w_p ( T^1_2 ) 
\\
\Vert T^1 \Vert_{\mathbb{L}^p_{\rm ad} ( \mu )}
=
1
\end{array}
\right.
.$$
This is satisfied for $ T^1 = T^1_{\W_p^{\rm ad}}$ defined by equation \eqref{eqdef:first order transport} and $
p\lambda_1 
= \Vert \partial_x^{\rm ad} \delta_m g \Vert_{\mathbb{L}^p_{\rm ad} ( \mu ) }
$. Now assume that $T^1_1, \cdots, T^k_1$, $T^1_2, \cdots, T^k_2$ and $ \lambda_1, \cdots, \lambda_k$ are constructed. The construction of the rest is very similar. We want to find $ T^{k+1}_1, T^{k+1}_2$ and $\lambda_{k+1} $ such that 
$$
\left\{ \begin{array}{ll}
&\nabla_{T} \mathcal{L}^{n, r}_{\W_p^{\rm ad}} \big( S_k ( r, \TT^{k+1}) ,  r S_k ( r, \Lambda_{k+1}) \big) 
=
\circ_{\mathbb{L}^{p'} (\mu_1) } ( r^{k+1} )
\\
&\nabla_{T_2} \mathcal{L}^{n, r} \big( S_k ( r, \TT^{k+1}) ,  r S_k ( r, \Lambda_{k+1})  \big) 
=
\circ_{\mathbb{L}^{p'} (\mu) } ( r^{k+1} )
\\
& \Big\Vert S_k ( r, \TT^{k+1}) \Big\Vert_{ \mathbb{L}^p_{\rm ad} (\mu)}^p
=
1
+
\circ ( r^{k} )
\end{array} \right.
$$
where $ \TT^{k+1} := ( T^i )_{1 \leq i \leq k+1} $ and $ \Lambda_{k+1} := ( \lambda_i )_{1 \leq i \leq k +1}$.  By the same computation as in the step $(iii)$, in the context of the Wasserstein ball, as done in the setting $\mathbf{d} = \W_p$, there exists $ U_1, U_2 \in \mathbb{L}^{p'}( \mu ), \alpha \in \R$, depending only on $T^1_1, \cdots, T^k_1$, $T^1_2, \cdots, T^k_2$, $\lambda_1, \cdots, \lambda_k $ such that 
$$
\left\{
 \begin{array}{ll}
p \big( \lambda_{k+1} w_p( T^1_1 ) + \lambda_1 \left( p - 1 \right) \vert T^1_{1} \vert^{p-2} T^{k+1}_1 \big)
 =
 U_1
 \\
p \big( \lambda_{k+1} w_p( T^1_2 ) + \lambda_1 \left( p - 1 \right) \vert T^1_{2} \vert^{p-2} T^{k+1}_2 \big)
 =
 U_2
 \\
p
\mathbb{E}^{\mu}
\big[ w_p( T^1_1) T^{k+1}_1 
+
w_p(T^1_2) T^{k+1}_2 
\big]
=
\alpha
 \end{array}
\right.
$$
for some $U_1, U_2$ and $\alpha$. The rest of the proof is the same.
\ep

\subsection{Proof of Proposition \ref{prop:mart expansion}}
We first prove an upper bound, then we prove the lower bound. 

\noindent {\bf Step (i):} We prove the upper bound: 
\begin{equation}\label{ineq:upper bound mart}
G^{\rm M}_{\rm ad} (r) \leq 
 L^{n,r}_{\W_p^{\rm ad}} \big(  S_{n} ( r, \TT^n ) \big) + \circ(r^n).
\end{equation}
Let $ ( T^i_1 , T_2^i , h_i ,\lambda_i )_{1 \leq i \leq n }$ be defined by Assumption \ref{ass: existence approx sad-point Mart adapted}, first notice that by weak duality 
\begin{align*}
G^{\rm M}_{\rm ad} (r) 
=
\sup_{ \mu' \in B^{ \rm M }_{\W^{\rm ad}_p} ( \mu, r ) } g ( \mu' ) 
 &=
 \sup_{ \mu' \in B_{\W^{\rm ad}_p} ( \mu, r ) } 
 \inf_{ h \in \mathbb{L}^{p'} ( \mu ) } 
 \big( g ( \mu' ) + \E^{\mu'}[ h^{\otimes} ] \big)
 \\
 &\leq 
 \inf_{ h \in \mathbb{L}^{p'} ( \mu ) } 
 \sup_{ \mu' \in B_{\W^{\rm ad}_p} ( \mu, r )} 
 g ( \mu' ) + \E^{\mu'}[ h^{\otimes} ]
 \\
 &\leq 
 \sup_{ \mu' \in B_{\W^{\rm ad}_p} ( \mu, r )}
 g ( \mu' ) + \E^{\mu'} \big[ S_n (r, \mathbf{h}^{\otimes}) \big] \,\, \text{where} \,\, \mathbf{h}^{\otimes}:= ( h_i^{\otimes} )_{1 \leq i \leq n} 
.\end{align*}
However, we see that 
\begin{align*}
 \sum_{i=1}^n r^{i-1} \E^{ \mu^{r, T} } [ h_i^{\otimes} ] 
 &=
 \sum_{i =1}^n 
 r^{i -1}
 \E^\mu \Big[ h_i \big( X_1 + r T_1 ( X_1 ) \big) \big( X_2 - X_1 + r( T_{2} - T_1 ) \big) \Big]
 \\
 &=
 \sum_{i =1}^n 
 r^{i}
 \E^\mu \Big[ h_i \big( X_1 + r T_1 ( X_1 ) \big) ( T_{2} - T_1 ) \Big]
 \,\,\, \text{ since $\mu \in {\rm M} $.}
 \\
 &=
 \sum_{i =1}^n 
 \sum_{k =0}^{n-i}
 r^{i + k }
 \E^\mu \Big[ h_i^{( k)} ( X_1) T_1( X_1)^k \big( T_{2} - T_1(X_1) \big) \Big]
 \\
 &\,\,\,\,\,\,+
 r^n 
 \sum_{i =1}^n 
  \int_0^1
 \E^\mu \Big[ \frac{( 1 - t )^{i-1} }{ (i - 1 )! }
\Delta_{rt T_1 (X_1) } [h_i^{(n-i)}] (X_1)
 ( T_{2} - T_1 )
 \big]
  \mathrm{d}t
.\end{align*}
Now, by Assumptions \hyperref[ass: existence approx sad-point Mart adapted]{${\rm D}_{\rm M}$}, each $h_i \in \Ec_{n-i}$. By the same computations as in the proof of Proposition \ref{prop:approx 1}, we get 
\begin{align*}
&\sup_{ \substack{ T \in \mathbb{L}^p_{\rm ad} (\mu) \\ \Vert T \Vert_{\mathbb{L}^p_{\rm ad} (\mu)} \leq 1} } 
 g ( \mu^{r, T}) + \E^{ \mu^{r, T} } \Big[ \sum_{i=1}^n h_i^{\otimes} r^{i-1} \Big] 
 \\
 &=
 \sup_{ \substack{ T \in \mathbb{L}^p_{\rm ad} (\mu) \\ \Vert T \Vert_{\mathbb{L}^p_{\rm ad} (\mu)} \leq 1} } 
 L^{n, r}_{\W_p^{\rm ad}} (T) + \sum_{i=1}^n \sum_{j =1}^{n-k} \frac{r^{k + j} }{k!} \E^{ \mu} \big[ h^{(k)}_i T_1 (X_1)^k ( T_2 - T_1 (X_1) ) \big] 
 +
 \circ ( r^n ) 
 \\
 &=
 \sup_{ \substack{ T \in \mathbb{L}^p_{\rm ad} (\mu) \\ \Vert T \Vert_{\mathbb{L}^p_{\rm ad} (\mu)} \leq 1} } 
 L^{n, r}_{\W_p^{\rm ad}} (T) + \Gamma^{n, r} ( \mathbf{h}, T )
 +
 \circ ( r^n )
\end{align*}
where $\mathbf{h} = ( h_1, \cdots, h_n ) $ and $\Gamma$ is defined by equation \eqref{eqdef:Gamm and L martingale}. Again, using the concavity Assumption \hyperref[ass: existence approx sad-point Mart adapted]{${\rm D}_{\rm M}$} \ref{cond: concavity martingale} and doing the same computations as in Proposition \ref{prop:expansion order n}, we get 
\begin{equation}\label{eq:upp bound martingale gamma }
 \sup_{\substack{ T \in \mathbb{L}^p_{\rm ad} (\mu) \\ \Vert T \Vert_{\mathbb{L}^p_{\rm ad} (\mu)} \leq 1} } 
  L^{n, r}_{\W_p^{\rm ad}}  ( T ) + \Gamma^{n, r} ( h, T ) 
 \leq 
 L^{n,r}_{\W_p^{\rm ad}} \big( S_{n} ( r, \TT^n ) \big) + \Gamma^{n, r} ( \mathbf{h}, S_{n} ( r, \TT^n ) ) + \circ(r^n)
\end{equation}
where $ \TT^n := ( T^i )_{1 \leq i \leq n} $. Since for all $ 1 \leq i \leq n $, $\E^\mu_1 [ T^i_2 ] = T_1^i $, we have 
$
\Gamma^{n, r} ( h,S_{n} ( r, \TT^n ) )
=
0
$, which, when combined with inequality \eqref{eq:upp bound martingale gamma } gives the estimate \eqref{ineq:upper bound mart}.

\noindent{Step (ii):} we prove the lower bound 
\begin{equation}\label{ineq:lower bound mart}
G^{\rm M}_{\rm ad} (r) \geq 
 L^{n,r}_{\W_p^{\rm ad}} \big(  S_{n} ( r, \TT^n ) \big) + \circ(r^n).
\end{equation}
Let $\big( (T^{k, \varepsilon}_1)_{1 \leq k \leq n } \big)_{\varepsilon > 0}$ be a family of functions, such that for $ 1 \leq k \leq n $, $T_1^{k, \varepsilon}$ is a $C^\infty $ compactly supported function satisfying $T_1^{k, \varepsilon} \xrightarrow[\varepsilon \rightarrow 0]{} T_1^k $. Let $T_2^{k, \varepsilon} = T^k_2 - T^k_1 + T^{k, \varepsilon}_1$. In order to prove the converse inequality, set 
 $$
 \mu_{r, \varepsilon}
 :=
 \mu \circ \big( X + \frac{r}{C_{r, \varepsilon} } S_n (r, T^{k, \varepsilon} ) \big)^{-1}
 \,\,\,\text{and}\,\,\,
 C_{r, \varepsilon} := \Vert S_n (r, T^{k, \varepsilon} ) \Vert _{\mathbb{L}^p ( \mu )}
.$$
Clearly, $\mu_{r, \varepsilon}$ is a martingale measure. Furthermore, since all $(T^k_1)_{1 \leq k \leq n}$ are smooth and compactly supported, the coupling $ \pi_{r, \varepsilon} := \mu \circ \big( X, X +\frac{r}{C_{r, \varepsilon} } S_n (r, T^{k, \varepsilon} ) \big)^{-1} $, is bi-causal; finally, by construction, $
\W^{\rm ad}_p(\mu,\mu_{r, \varepsilon}) \leq r
.$ Hence, we have the inequality 
 $
 G^{\rm M}_{\rm ad} (r ) \geq g ( \mu_{r, \varepsilon} )
.$ Now, since for $ 1 \leq i \leq n $ , $\E^\mu_1 [ T^i_2] =T^i_1 $, we have $g ( \mu_{r, \varepsilon} ) \xrightarrow[\varepsilon \rightarrow 0]{} g(\mu_r)$ where 
 $$
 \mu_{r}
 :=
 \mu \circ \big( {\rm Id} + \frac{r}{C_{r} }S_{n} ( r, \TT^n ) \big)^{-1}
 \,\,\,\text{and}\,\,\,
 C_{r} := \Vert S_{n} ( r, \TT^n ) \Vert _{\mathbb{L}^p ( \mu )}
.$$
Again, using the expansion with linear functional derivative, and the classical expansion, we have 
 $$
 g ( \mu_{r} )
 =
 g ( \mu ) 
 +
  L^{n, r}_{\W_p^{\rm ad}} \Big(\frac{1}{C_r} S_{n} ( r, \TT^n ) \Big) 
 +
 \circ ( r^n ) 
.$$
Since $ C_r = 1 + \circ ( r^{n-1} ) $, we get 
$$
  L^{n, r}_{\W_p^{\rm ad}}  \Big(\frac{ S_{n} ( r, \TT^n ) }{C_r}  \Big) 
 =
  L^{n, r}_{\W_p^{\rm ad}}  \big( S_{n} ( r, \TT^n ) \big) + \circ (r^{n} ) 
.$$
Again, the same computations as in the proof of Proposition \ref{prop:expansion order n} yield
$  L^{n, r}_{\W_p^{\rm ad}}  \Big( \sum_{i=1}^{n} r^{i-1} \frac{ T^{i} }{C_r} \Big) =  L^{n, r}_{\W_p^{\rm ad}}  \big( \sum_{i=1}^{n} r^{i-1} T^i \big) + \circ ( r^{n} ) 
$. Combining the last equality with the inequality \eqref{ineq:lower bound mart}, we get 
$$
  L^{n, r}_{\W_p^{\rm ad}}  \big( S_{n} ( r, \TT^n ) \big) + \circ (r^{n} ) \leq G^{\rm M}_{\rm ad} (r) \leq  L^{n, r}_{\W_p^{\rm ad}}  \big( S_{n} ( r, \TT^n ) \big) + \circ (r^{n} ) 
.$$
\ep

\subsection{Explicit computations of second-order expansion}

In this subsection we will explicitly compute the order $2$ expansion for both functions, $G^{ \mathbf{d} } (r) := \sup_{ \mu' \in B^{\mathbf{d}}( \mu, r) } g( \mu' )$ for $ \mathbf{d} \in \{ \W_p^{\rm ad}, \W_p \}$ and $G^{ \rm M } $. For the sake of clarity, and since we only deal with a maximum of two copies of $X \sim \mu$, we drop the notation $ X^1, \cdots, X^n$ which we replace by $X$ and $\hat{X}$.

\noindent{}\textbf{Order $2$ Expansion of $G^{ \W_p}$ at $0$.} Following the proof of Proposition \ref{prop:expansion order n}, and the symmetry property of Proposition \ref{prop:symm k-th lin deriv}, we need to find $T^1$, $T^2$, $\lambda_1$ and $\lambda_2$ such that 
\begin{align*}
\nabla_T L^{2, r}_{\W_p}\!\!
&=
\! r \partial_x \delta_m g 
\! +\!
r^2 \Big( 
 \big( \partial_{xx} \delta_m g \big) T^1
\! \! + \!
\hat{\E}^{\mu^{\otimes 2}}
\Big[ \big(
\partial_{x \hat{x} } \delta^2_m g ( \mu,\hat{X}, X ) 
 \big)
 T^1 ( \hat{X} ) \Big]
 \Big)
 \\
 &
 \hspace{0.3 cm 
 }-
p ( \lambda_1 + r \lambda_2 ) w_p( T^1 + r T^2 )
 \\
 &=
 \circ ( r^2 ) 
\end{align*}
and 
$$
\E^\mu \big[ \vert T^1 + r T^2 \vert^p \big] 
=
1 + \circ ( r ) 
.$$
We already showed that 
$$
T^1 = \frac{ w_{p'} \big( \partial_x \delta_m g ( \mu, X ) \big) }{ \Vert \partial_x \delta_m g \Vert_{\mathbb{L}^{p'} ( \mu ) }^{p'/p } }
\,\, \text{and} \,\, 
\lambda_1 = \frac{ \Vert \partial_x \delta_m g \Vert_{\mathbb{L}^{p'} ( \mu )} }{p}.
$$
The equations satisfied by $T_2$ and $\lambda_2$ are 
$$
\left\{
 \begin{array}{ll}
p\lambda_{2} w_p ( T^1 ) + \lambda_1 p ( p - 1 ) \vert T^1 \vert^{p-2} T^{2} 
 =
 \partial_{xx} \delta_m g T_1
 +
 \hat{\E}^{\mu} 
\Big[
 \big( \partial_{x \hat{x}} \delta^2_m g ( \mu,\hat{X}, X )
T^1( \hat{X}) \Big]
 \\
p
\mathbb{E}^{\mu}
\big[ w_p (T^1 ) T^{2}
\big]
=
0
 \end{array}
\right.
.$$
The solution of this system is 
$$
\left\{
 \begin{array}{ll}
T_2
 =
 \frac{1}{ p( p - 1 )\lambda_1 \vert T^1 \vert^{p-2} }
 \Big( 
 \partial_{xx} \delta_m g T^1
+
 \hat{\E}^{\mu^{\otimes 2}} 
\Big[
 \big( \partial_{x \hat{x}} \delta^2_m g ( \mu,\hat{X}, X )
T^1 ( \hat{X} ) \Big]
-
p\lambda_2 w_p( T^1 ) 
\Big) 
 \\
\lambda_2
=
\E^{\mu}
\Big[
T_1 
\Big( 
 \partial_{xx} \delta_m g T_1
 +
 \hat{\E}^{\mu^{\otimes 2}} 
\big[
 ( \partial_{x \hat{x}} \delta^2_m g ( \mu,\hat{X}, X )
T^1 ( \hat{X}) \big]
\Big)
\Big]
 \end{array}
\right.
.$$
Now, using the equation $\mathbb{E}^{\mu}
[ w_p( T^1) T^{2} ] = 0 $, we see that 
$
\E^{\mu} \big[ 
\partial_x \delta_m g ( \mu, X ) T^2 \big] 
=
0
$
which gives 
\begin{align*}
G^{\W_p} \! ( r ) \!
&=\!
g ( \mu ) 
\!+\!
r \E^{\mu} \big[ \partial_x \delta_m g T^1 \big]
\!\!+\!
\frac{r^2}{2}
\Big( \!
\E^{\mu} \!\big[ 
(\partial_{xx} \delta_m g ) T^1( \!X\! )^2 \big]
\!\!+\!
\E^{\mu^{\otimes 2}} \!\big[ (\partial_{x \hat{x}} \delta^2_m g ) T^1 \otimes T^1 \big]\!
\Big)
\! +\!
\circ ( r^2 ) 
.\end{align*}
\ep

\noindent{}\textbf{Order $2$ Expansion of $G^{ \W^{\rm ad}_p}$ at $0$.} Following the proof of Proposition \ref{prop:expansion order n}, we need to find $T^1 := ( T^1_1, T^1_2)$, $T^2 := ( T^2_1, T^2_2) $, $\lambda_1$ and $\lambda_2$ such that 
\begin{equation*}
\begin{split}
&p
\lambda_1 w_p ( T^1_1 )
 =
 \E^{\mu}_1 \big[ \partial_{x_1} \delta_m g \big] 
 \,\,\, , \,\,\,
 p
 \lambda_1 w_p ( T^1_2 )
 =
 \partial_{x_2} \delta_m g
 \,\,\, , \,\,\,
 \E^{\mu} \big[ \vert T^1_{1} \vert^p +\vert T^1_2 \vert^p \big] 
 =
 1
 \\
 &p \lambda_2 w_p ( T^1_1 ) \!+\!\! \lambda_1 p ( p \!-\! 1 ) \vert T^1_1 \vert^{p-2} T^2_1
 \!\!=\! 
 \E^{\mu}_1 [ (\partial_{x_1} \partial_x \delta_m g ) \!\cdot \!T^1 ] 
 \!+\!\!
 \sum_{i = 1}^2
 \E_1^{\mu} \Big[ 
 \hat{\E}^{\mu} 
  \big[ \partial_{x_1 \hat{x}_i} \delta^2_m g ( \mu, X, \hat{X} ) T^1_i \big] 
 \Big]
 \\
 & p \lambda_2 w_p( T^1_2)
 + 
 \lambda_1 p ( p - 1) \vert T^1_2 \vert^{p-2} T_2^2
 =
 (\partial_{x_2} \partial_x \delta_m g )\cdot T^1
 +
 \sum_{i = 1}^2 \hat{\E}^{\mu} \big[ \partial_{x_2 \hat{x}_i} \delta^2_m g ( \mu, X, \hat{X} ) T^1_i \big] 
 \\
 &\E^{\mu} [ w_p ( T^1_1 ) T^2_1
 +
 w_p ( T^1_2 ) T^2_2 ] 
 =
 0
\end{split}
\end{equation*}
From previous computations, we already know $ T^1 $ and $\lambda_1$, which are given by \eqref{eqdef:first order transport} and $\lambda_1 = \frac{\Vert \partial_x^{\rm ad} \delta_m g \Vert_{\mathbb{L}^p_{\rm ad} (\mu )}}{p}$. Furthermore, combining the last equation with the expressions of $T^1_1$ and $T^1_2$, we see that 
$
\E^{\mu} [ (\partial_{x_1} \delta_m g) T_1^2] 
+
\E^{\mu}[ (\partial_{x_2} \delta_m g ) T_{2, 2 } ] 
=
0
$, hence, by a quick computation, 
\begin{align*}
G_{\rm ad} ( r) 
&=
g( \mu ) 
+
L^{2,r} ( T^1_{1} + rT^2_{1} , T^1_{2} + r T^2_{2} ) 
+
\circ ( r^2 ) 
\\
&=
g( \mu ) 
+
L^{2,r} ( T^1_{1}, T^1_{2} ) 
+
\circ ( r^2 )
\end{align*}

\ep 

\noindent{\textbf{Order $2$ expansion of $G^{\rm M}_{\rm ad}$ at $0$.}} We write the first order condition for $n = 2$, 
\begin{equation}
\begin{split}
&\E^{\mu}_1[ \partial_{x_1} \delta_m g ]
-
h_1( X_1 ) 
-
\lambda_1 p w_p (T^1_1)
=
0
\\
& \partial_{x_2} \delta_m g 
+
h_1 (X_1)
-
\lambda_1 p w_p ( T^1_2 ) 
=
0
\\
&\E^{\mu} [ \vert T^1_1 \vert^p ]
 +
\E^{\mu} [ \vert T^1_2 \vert^p ]
=
1
\\
&p \lambda_{2} w_p (T_1^1) + p \lambda_1 ( p - 1 ) \vert T^1_1 \vert^{p-2} T^1_2 
 -
2 T^1_1 h'_1 - h_2
=
\sum_{i = 1}^{2} \E^{\mu}_1 [ (\partial_{x_i x_1} g ) T^1_i ]
\\
&p\lambda_{2} w_p( T^1_2) + p\lambda_1 ( p - 1 ) \vert T^1_2 \vert^{p-2} T^2_2 
 =
\sum_{i = 1}^{2} (\partial_{ x_i, x_2} g ) T^1_i 
+ h'_1 T^1_1 + h_2
\\
&\mathbb{E}^{\mu}
[ w_p ( T^1_1 ) T_1^2 ]
+
\mathbb{E}^{\mu}[ w_p (T^1_2) T^2_{2}
]
=
0
\\
& \E^{\mu}_1[ T^1_{2} ] = T^1_{1} \,\,\,\, \text{and} \,\,\,\,
\E^{\mu}_1 [ T^2_{2} ] = T^2_1 
\end{split}
\end{equation}
A quick computation yields 
\begin{align*}
&p\lambda_1 
=
\Vert \partial_x^{\rm ad } \delta_m g \Vert_{\mathbb{L}^{p'} (\mu) } 
\\
& w_{p'} \big( \E^{\mu}_1 [ \partial_{x_1} \delta_m g ] - h_1 \big)
=
\E^{\mu}_1 [ w_{p'} \big( \partial_{x_2} \delta_m g + h_1 \big) ]
\\
& T^1 = \frac{ w_p' (\partial_{x}^{\rm ad} \delta_m g )} {p\lambda_1} 
\\
&h_2 
=
\frac{
 \sum_{i = 1}^{2} 
 \E^{\mu}_1 [ 
 \frac{(\partial_{x_i x_1} g) T^1_i }{ \vert T^1_1 \vert^{p-2} }
- 
 \frac{ (\partial_{ x_i x_2}g) T^1_i}{ \vert T^1_2 \vert^{p-2} }
 ]
-h'_1 \E_1^{\mu} [ \frac{ 1 }{ \vert T^1_{2} \vert^{p-2} } - \frac{ 1 }{ \vert T^1_1 \vert^{p-2} } ]
}
{
1 + \vert T^1_1 \vert^{p-2} \E_1^{\mu}[ \frac{1}{ \vert T^1_{2} \vert^{p-2} } ]
 }
 \\
&p\lambda_2 
=
\E^{\mu} \Big[ T^1 \cdot \big( D^{2} g T^1 \big) \Big]
\\
& T^2_1 
=
\frac{
\sum_{i = 1}^{2} \E^{\mu}_1 [ (\partial_{x_i x_1} g) T^1_i ]
-
p\lambda_{2} w_p(T^1_{1}) 
-
h'_1 T^1_1
-
h_2}
{p\lambda_1 ( p - 1 ) \vert T^1_{1} \vert^{p-2}}
\\
& T^2_2 
 =
 \frac{
\sum_{i = 1}^{2} (\partial_{x_i x_2} g )T^1_{i} 
+h'_1 T^1_{1} + h_2
-
p\lambda_{2} w_p( T^1_{2} )
}{p\lambda_1 ( p - 1 ) \vert T^1_2\vert^{p-2} }
.\end{align*}
And similar considerations as the ones done for the computation of order two expansions of $G^{\rm ad}$ and $G$ yield
$
G^{\rm M}_{\rm ad} (r) = L^{2, r} ( T_1^1, T_2^1 ) + \circ(r^2)
.$

\normalem

\begin{sloppypar}
\printbibliography
\end{sloppypar}

 \end{document}